\numberwithin{equation}{section}
     \newtheorem{thm}{Theorem}[section]
     \newtheorem{cor}[thm]{Corollary}
     \newtheorem{prop}[thm]{Proposition}
     \newtheorem{lem}[thm]{Lemma}
\theoremstyle{definition}
      \newtheorem{defn}{Definition}[section]
     \newtheorem{exmp}{Example}[section]
\theoremstyle{remark}
     \newtheorem{rem}{Remark}[section]
\newcommand{\R}{\mathbb{R}}
\newcommand{\Z}{\mathbb{Z}}
\newcommand{\cL}{\mathcal{L}}
\newcommand{\cY}{\mathcal{Y}}
\newcommand{\BMO}{\mathrm{BMO}}
\newcommand{\Lip}{\mathrm{Lip}}
\newcommand{\loc}{\mathrm{loc}}
\newcommand{\comp}{\mathrm{comp}}
\newcommand{\ls}{\lesssim}
\newcommand{\gs}{\gtrsim}
\newcommand{\tp}{\tilde{p}}
\newcommand{\tq}{\tilde{q}}
\newcommand{\Cic}{C^{\infty}_{\comp}}
\newcommand{\Lic}{L^{\infty}_{\comp}}
\newcommand{\Li}{L^{\infty}}
\newcommand{\LP}{L^{\Phi}}
\newcommand{\LPs}{L^{\Psi}}
\newcommand{\LT}{L^{\Theta}}
\newcommand{\LcP}{L^{\widetilde\Phi}}
\newcommand{\LcPs}{L^{\widetilde\Psi}}
\newcommand{\cPhi}{\widetilde{\Phi}}
\newcommand{\cPsi}{\widetilde{\Psi}}
\newcommand{\wL}{\mathrm{w}\hskip-0.6pt{L}}
\newcommand{\wLP}{\mathrm{w}\hskip-0.6pt{L}^{\Phi}}
\newcommand{\wLPs}{\mathrm{w}\hskip-0.6pt{L}^{\Psi}}
\newcommand{\Ir}{I_{\rho}}
\newcommand{\Ia}{I_{\alpha}}
\newcommand{\Ma}{M_{\alpha}}
\newcommand{\Mr}{M_{\rho}}
\newcommand{\Ms}{M^{\sharp}}
\newcommand{\Md}{M^{\mathrm{dy}}}
\newcommand{\cQd}{\mathcal{Q}^{\mathrm{dy}}}
\newcommand{\mint}{-\hspace{-13pt}\int}
\newcommand{\dlim}{\displaystyle\lim}
\newcommand{\sgn}{\mathrm{sgn}}
\newcommand{\biPhi}{{\it{\bar\Phi}}}
\newcommand{\biP}{{\it{\bar\Phi}}}
\newcommand{\iPy}{{\it{\Phi_Y}}}
\newcommand{\biPy}{{\it{\bar\Phi_Y}}}
\newcommand{\dtwo}{\Delta_2}
\newcommand{\ntwo}{\nabla_2}
\newcommand{\bdtwo}{\bar\Delta_2}
\newcommand{\bntwo}{\bar\nabla_2}
\newcommand{\msckw}{%
\footnotetext{\hspace{-0.35cm} 2010 {\it Mathematics Subject Classification}. 
46E30, 42B35.
\endgraf
{\it Key words and phrases.} 
Orlicz space, Campanato space, fractional integral, commutator. 
\endgraf
Minglei Shi,
18nd206l@vc.ibaraki.ac.jp, stfoursml@gmail.com
\endgraf
Ryutaro Arai,
18nd201t@vc.ibaraki.ac.jp, araryu314159@gmail.com
\endgraf
Eiichi Nakai,
eiichi.nakai.math@vc.ibaraki.ac.jp
}
}
\begin{document}

\title{Generalized fractional integral operators 
and their commutators with functions in generalized Campanato spaces on Orlicz spaces \msckw}
\author{Minglei Shi, Ryutaro Arai and Eiichi Nakai\footnote{Corresponding author} \\
{\small Department of Mathematics, Ibaraki University, Mito, Ibaraki 310-8512, Japan}}
\date{}

\maketitle

\begin{abstract}
We investigate the commutators $[b,I_{\rho}]$ 
of generalized fractional integral operators $I_{\rho}$ with functions $b$ in
generalized Campanato spaces
and give a necessary and sufficient condition for the boundedness of the commutators
on Orlicz spaces.
To do this we
define Orlicz spaces with generalized Young functions
and prove the boundedness of generalized fractional maximal operators on the Orlicz spaces.
\end{abstract}

\section{Introduction}\label{sec:intro}

Let $\R^n$ be the $n$-dimensional Euclidean space,
and let $\Ia$ be the fractional integral operator of order $\alpha\in(0,n)$, that is,
\begin{equation*}
 \Ia f(x)=\int_{\R^n} \frac{f(y)}{|x-y|^{n-\alpha}} \,dy,
 \quad x\in\R^n.
\end{equation*}
Then it is known as the Hardy-Littlewood-Sobolev theorem that
$\Ia$ is bounded from $L^p(\R^n)$ to $L^q(\R^n)$,
if $\alpha\in(0,n)$, $p,q\in(1,\infty)$ and $-n/p+\alpha=-n/q$.
This boundedness 
was extended to Orlicz spaces 
by several authors, see
\cite{Cianchi1999,Edmunds-Gurka-Opic1995,Kokilashvili-Krbec1991,ONeil1965,Strichartz1972,Torchinsky1976,Trudinger1967}, etc.
Chanillo~\cite{Chanillo1982} considerd the commutator
\begin{equation*}
 [b,\Ia]f=b\Ia f-\Ia(bf),
\end{equation*}
with $b\in\BMO$
and proved that $[b,\Ia]$ has the same boundedness as $\Ia$. 
The result was also extended to Orlicz spaces by 
Fu, Yang and Yuan~\cite{Fu-Yang-Yuan2014}
and Guliyev, Deringoz and Hasanov~\cite{Guliyev-Deringoz-Hasanov2017}.

In this paper we consider generalized fractional integral operators
$\Ir$ on Orlicz spaces.
For a function $\rho:(0,\infty)\to(0,\infty)$,
the operator $\Ir$ is defined by
\begin{equation}\label{Ir}
 \Ir f(x)=\int_{\R^n}\frac{\rho(|x-y|)}{|x-y|^n}f(y)\,dy,
 \quad x\in\R^n,
\end{equation}
where we always assume that
\begin{equation}\label{int rho}
 \int_0^1\frac{\rho(t)}{t}\,dt<\infty.
\end{equation}
If $\rho(r)=r^{\alpha}$, $0<\alpha<n$, 
then $\Ir$ is the usual fractional integral operator $\Ia$.
The condition \eqref{int rho} is needed for the integral in \eqref{Ir} 
to converge for bounded functions $f$ with compact support.
In this paper we also assume that 
there exist positive constants $C$, $K_1$ and $K_2$ with $K_1<K_2$ such that, for all $r>0$,
\begin{equation}\label{sup rho}
 \sup_{r\le t\le 2r}\rho(t)
 \le
 C\int_{K_1r}^{K_2r}\frac{\rho(t)}{t}\,dt.
\end{equation}

The operator $\Ir$ was introduced in \cite{Nakai2001Taiwan} 
to extend the Hardy-Littlewood-Sobolev theorem to Orlicz spaces
whose partial results were announced in 
\cite{Nakai2000ISAAC}.
For example, 
the generalized fractional integral $I_{\rho}$ 
is bounded from $\exp L^p(\R^n)$ to $\exp L^q(\R^n)$,
where 
\begin{equation}\label{rho log}
     \rho(r)=
     \begin{cases}
          1/(\log(1/r))^{\alpha+1} & \text{for small}\; r, \\
          (\log r)^{\alpha-1}     & \text{for large}\; r,
     \end{cases}
     \quad \alpha>0,
\end{equation}
$p,q\in(0,\infty)$, $-1/p+\alpha=-1/q$ 
and $\exp L^p(\R^n)$ is the Orlicz space $L^{\Phi}(\R^n)$ with 
\begin{equation}\label{Phi exp}
     \Phi(r)=
     \begin{cases}
          1/\exp(1/r^p) & \text{for small}\; r, \\
          \exp(r^p)     & \text{for large}\; r.
     \end{cases}
\end{equation}
See also \cite{Nakai2001SCMJ,Nakai2002Lund,Nakai2004KIT,Nakai2008Studia,Nakai-Sumitomo2001SCMJ}.
Recently, in \cite{Deringoz-Guliyev-Nakai-Sawano-Shi-preprint}  
some necessary and sufficient conditions for the boundedness of $\Ir$ on Orlicz spaces
have been given.

In this paper we consider the commutator $[b,\Ir]$ with a function $b$ in generalized Campanato spaces.
To prove the boundedness of $[b,\Ir]$ on Orlicz spaces
we need the sharp maximal operator $\Ms$ and
generalized fractional maximal operators $\Mr$,
see \eqref{Ms} and \eqref{Mr} below for their definitions.
Moreover, we need a generalization of the Young function.

First we recall the definition of the generalized Campanato space and
the sharp maximal and generalized fractional maximal operators.
We denote by $B(x,r)$ 
the open ball centered at $x\in\R^n$ and of radius $r$,
that is, 
\begin{equation*}
 B(x,r)
 =\{y\in\R^n:|y-x|<r\}.
\end{equation*}
For a measurable set $G \subset \R^n$, 
we denote by $|G|$ and $\chi_{G}$ 
the Lebesgue measure of $G$ and the characteristic function of $G$, 
respectively.
For a function $f\in L^1_{\loc}(\R^n)$ and a ball $B$, 
let 
\begin{equation*}\label{mean}
  f_{B}=\mint_{B} f=\mint_{B} f(y)\,dy=\frac1{|B|}\int_{B} f(y)\,dy.
\end{equation*}

\begin{defn}\label{defn:gC}
For $p\in[1,\infty)$ and $\psi:(0,\infty)\to(0,\infty)$, 
let
$\cL_{p,\psi}(\R^n)$ be the set of all functions $f$ such that the 
following functional is finite: 
\begin{equation*}
 \|f\|_{\cL_{p,\psi}(\R^n)}
  =\sup_{B=B(x,r)}\frac1{\psi(r)}\left(\mint_{B}|f(y)-f_{B}|^p\,dy\right)^{1/p},
\end{equation*}
where
the supremum is taken over all balls $B(x,r)$ in $\R^n$.
\end{defn}
Then $\|f\|_{\cL_{p,\psi}(\R^n)}$ is a norm modulo constant functions 
and thereby $\cL_{p,\psi}(\R^n)$ is a Banach space.
If $p=1$ and $\psi\equiv1$, then $\cL_{p,\psi}(\R^n)=\BMO(\R^n)$.
If $p=1$ and $\psi(r)=r^{\alpha}$ ($0<\alpha\le1$), 
then $\cL_{p,\psi}(\R^n)$ coincides with $\Lip_{\alpha}(\R^n)$.

The sharp maximal operator $\Ms$ is defined by 
\begin{equation}\label{Ms}
 \Ms f(x)=\sup_{B\ni x}\mint_{B}|f(y)-f_B|\,dy,
 \quad x\in\R^n,
\end{equation}
where the supremum is taken over all balls $B$ containing $x$.
For a function $\rho:(0,\infty)\to(0,\infty)$, let
\begin{equation}\label{Mr}
 \Mr f(x)=\sup_{B(z,r)\ni x}\rho(r)\mint_{B(z,r)}|f(y)|\,dy,
 \quad x\in\R^n,
\end{equation}
where the supremum is taken over all balls $B(z,r)$ containing $x$.
We don't assume the condition \eqref{int rho} or \eqref{sup rho} on the definition of $\Mr$.
The operator $\Mr$ was studied in \cite{Sawano-Sugano-Tanaka2011} on generalized Morrey spaces.
If $\rho(r)=|B(0,r)|^{\alpha/n}$, 
then $M_{\rho}$ is the usual fractional maximal operator $\Ma$.
If $\rho\equiv1$, then $\Mr$ is 
the Hardy-Littlewood maximal operator $M$, that is,
\begin{equation*}
 Mf(x)=\sup_{B\ni x}\mint_{B}|f(y)|\,dy,
 \quad x\in\R^n.
\end{equation*}

It is known that 
the usual fractional maximal operator $\Ma$ 
is dominated pointwise by 
the fractional integral operator $\Ia$,
that is,
$\Ma f(x)\le C\Ia|f|(x)$ for all $x\in\R^n$.
Then the boundedness of $\Ma$ follows from one of $\Ia$.
However, 
we need a better estimate on $\Mr$ than $\Ir$
to prove the boundedness of the commutator $[b,\Ir]$.
In this paper 
we give a necessary and sufficient condition of the boundedness of $\Mr$ 
which sharpens the result in \cite{Deringoz-Guliyev-Nakai-Sawano-Shi-preprint}.

The organization of this paper is as follows. 
In Section~\ref{sec:Young} 
we recall the definition of the Young function and give its generalization.
Then 
we define Orlicz spaces with generalized Young functions.
We state main results in Section~\ref{sec:main}. 
We give some lemmas in Section~\ref{sec:lemmas}
to prove the main results.
The boundedness of $\Ir$ has been proved in \cite{Deringoz-Guliyev-Nakai-Sawano-Shi-preprint}.
We prove the boundedness of $\Mr$ in Section~\ref{sec:proofs bdd}.
Moreover, 
we investigate pointwise estimate by using the sharp maximal operator 
and the norm estimate by the sharp maximal operator in Section~\ref{sec:sharp}.
Finally, using the generalized Young function 
and the results in Sections~\ref{sec:lemmas}--\ref{sec:sharp},
we prove the boundedness of $[b,\Ir]$ in Section~\ref{sec:proof comm}.

At the end of this section, we make some conventions. 
Throughout this paper, we always use $C$ to denote a positive constant 
that is independent of the main parameters involved 
but whose value may differ from line to line.
Constants with subscripts, such as $C_p$, is dependent on the subscripts.
If $f\le Cg$, we then write $f\ls g$ or $g\gs f$; 
and if $f \ls g\ls f$, we then write $f\sim g$.

\section{Generalization of the Young function and Orlicz spaces}\label{sec:Young}

First we define a set $\biP$ of increasing functions 
$\Phi:[0,\infty]\to[0,\infty]$
and give some properties of functions in $\biP$.

For an increasing function 
$\Phi:[0,\infty]\to[0,\infty]$,
let
\begin{equation*} 
 a(\Phi)=\sup\{t\ge0:\Phi(t)=0\}, \quad 
 b(\Phi)=\inf\{t\ge0:\Phi(t)=\infty\},
\end{equation*} 
with convention $\sup\emptyset=0$ and $\inf\emptyset=\infty$.
Then $0\le a(\Phi)\le b(\Phi)\le\infty$.
Let $\biP$ be the set of all increasing functions
$\Phi:[0,\infty]\to[0,\infty]$
such that
\begin{align}\label{ab}
 &0\le a(\Phi)<\infty, \quad 0<b(\Phi)\le\infty, \\
 &\lim_{t\to+0}\Phi(t)=\Phi(0)=0, \label{lim_0} \\
 &\text{$\Phi$ is left continuous on $[0,b(\Phi))$}, \label{left cont} \\
 &\text{if $b(\Phi)=\infty$, then } 
 \lim_{t\to\infty}\Phi(t)=\Phi(\infty)=\infty, \label{left cont infty} \\
 &\text{if $b(\Phi)<\infty$, then } 
 \lim_{t\to b(\Phi)-0}\Phi(t)=\Phi(b(\Phi)) \ (\le\infty). \label{left cont b}
\end{align}

In what follows,
if an increasing and left continuous function $\Phi:[0,\infty)\to[0,\infty)$ satisfies
\eqref{lim_0} and $\dlim_{t\to\infty}\Phi(t)=\infty$,
then we always regard that $\Phi(\infty)=\infty$ and that $\Phi\in\biP$.

For $\Phi\in\biP$,
we recall the generalized inverse of $\Phi$
in the sense of O'Neil \cite[Definition~1.2]{ONeil1965}.

\begin{defn}\label{defn:ginverse}
For $\Phi\in\biP$ and $u\in[0,\infty]$, let
\begin{equation}\label{inverse}
 \Phi^{-1}(u)
 = 
\begin{cases}
 \inf\{t\ge0: \Phi(t)>u\}, & u\in[0,\infty), \\
 \infty, & u=\infty.
\end{cases}
\end{equation}
\end{defn}

Let $\Phi\in\biP$. 
Then $\Phi^{-1}$ is finite, increasing and right continuous on $[0,\infty)$
and positive on $(0,\infty)$.
If $\Phi$ is bijective from $[0,\infty]$ to itself, 
then $\Phi^{-1}$ is the usual inverse function of $\Phi$.
Moreover, 
we have
the following proposition,
which is a generalization of Property 1.3 in \cite{ONeil1965}.

\begin{prop}\label{prop:inverse}
Let $\Phi\in\biP$.
Then
\begin{equation}\label{inverse ineq}
 \Phi(\Phi^{-1}(u)) \le u \le  \Phi^{-1}(\Phi(u))
 \quad\text{for all $u\in[0,\infty]$}.
\end{equation}
\end{prop}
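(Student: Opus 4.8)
The plan is to establish the two inequalities in \eqref{inverse ineq} separately, first disposing of the boundary values $u=0$ and $u=\infty$ and then treating $u\in(0,\infty)$. Throughout I would use only the monotonicity of $\Phi$, the left continuity assumptions \eqref{left cont}--\eqref{left cont b}, the normalization \eqref{lim_0}, and the fact that $\Phi(\infty)=\infty$; the latter holds because it is assumed in \eqref{left cont infty} when $b(\Phi)=\infty$, and when $b(\Phi)<\infty$ it follows from the definition of $b(\Phi)$ together with monotonicity (any $t>b(\Phi)$ lies above some $s$ with $\Phi(s)=\infty$).

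For the right-hand inequality $u\le\Phi^{-1}(\Phi(u))$ I would argue directly from Definition~\ref{defn:ginverse}. If $\Phi(u)=\infty$ there is nothing to prove since $\Phi^{-1}(\infty)=\infty$. Otherwise put $v=\Phi(u)<\infty$; since $\Phi$ is increasing, every $t$ with $\Phi(t)>\Phi(u)$ satisfies $t>u$, so $u$ is a lower bound of $\{t\ge0:\Phi(t)>v\}$ and hence $u\le\Phi^{-1}(v)=\Phi^{-1}(\Phi(u))$. This direction uses monotonicity only.

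For the left-hand inequality $\Phi(\Phi^{-1}(u))\le u$, the case $u=\infty$ is immediate from $\Phi(\infty)=\infty$. For $u\in[0,\infty)$ set $t^{\ast}=\Phi^{-1}(u)$. First I would note that $\{t\ge0:\Phi(t)>u\}$ contains a finite element (using $\lim_{t\to\infty}\Phi(t)=\infty$ when $b(\Phi)=\infty$, and $\Phi\equiv\infty$ beyond $b(\Phi)$ otherwise), so $t^{\ast}<\infty$. If $t^{\ast}=0$ then $\Phi(t^{\ast})=\Phi(0)=0\le u$ by \eqref{lim_0}. If $t^{\ast}>0$, then by definition of the infimum $\Phi(t)\le u$ for every $t<t^{\ast}$; choosing $t_{k}\uparrow t^{\ast}$ and letting $k\to\infty$ yields $\Phi(t^{\ast})\le u$, provided $\Phi$ is left continuous at $t^{\ast}$. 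To justify the latter one observes that $t^{\ast}\le b(\Phi)$ (because $\Phi(t)=\infty>u$ for all $t>b(\Phi)$), so that the left continuity hypotheses \eqref{left cont} and \eqref{left cont b} indeed apply at $t^{\ast}$.

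The argument is elementary; the one step that requires care is this last one, namely locating $t^{\ast}=\Phi^{-1}(u)$ relative to $b(\Phi)$ so that left continuity may legitimately be invoked, together with keeping track of the conventions $\sup\emptyset=0$ and $\inf\emptyset=\infty$ in the degenerate configurations.
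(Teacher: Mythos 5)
Your proof is correct and follows essentially the same route as the paper: monotonicity of $\Phi$ gives $u\le\Phi^{-1}(\Phi(u))$, and the left continuity hypotheses \eqref{left cont}--\eqref{left cont b} give $\Phi(\Phi^{-1}(u))\le u$, with the degenerate cases $\Phi^{-1}(u)=0$, $u=\infty$ handled via \eqref{lim_0} and the conventions on $a(\Phi),b(\Phi)$. Your direct limit argument at $t^{\ast}=\Phi^{-1}(u)$ (after locating $t^{\ast}\le b(\Phi)$) is just a reorganization of the paper's implications \eqref{inv 2}--\eqref{inv 3}, so no substantive difference.
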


\begin{proof}
First we show that, 
for all $t,u\in[0,\infty]$, 
\begin{equation}\label{inv 1}
 \Phi(t)\le u \ \Rightarrow \ t\le\Phi^{-1}(u).
\end{equation}
If $\Phi(t)\le u$, then $\Phi(s)>u\Rightarrow \Phi(s)>\Phi(t)\Rightarrow s>t$ and
\begin{equation*}
 \{s\ge0:\Phi(s)>u\}\subset\{s\ge0:s>t\}.
\end{equation*}
Hence,
\begin{equation*}
 \Phi^{-1}(u)=\inf\{s\ge0:\Phi(s)>u\}\ge\inf\{s\ge0:s>t\}=t.
\end{equation*}
This shows \eqref{inv 1}. 
Now, letting $\Phi(t)=u$ and using \eqref{inv 1}, we have that $t\le\Phi^{-1}(u)=\Phi^{-1}(\Phi(t))$,
which is the second inequality in \eqref{inverse ineq}.

Next 
we show that,
for all $t\in(0,\infty]$ and $u\in[0,\infty]$, 
\begin{align}\label{inv 2}
 \Phi(t)>u \ &\Rightarrow \ t>\Phi^{-1}(u), \\
 t\le \Phi^{-1}(u) \ &\Rightarrow \ \Phi(t)\le u.\label{inv 3}
\end{align}
We only show \eqref{inv 2}, since \eqref{inv 3} is equivalent to \eqref{inv 2}.
If $\Phi(t)>u$, then $\Phi(s)>u$ for some $s<t$
by the properties \eqref{left cont}--\eqref{left cont b}.
By the definition of $\Phi^{-1}$ we have that
$s\ge\Phi^{-1}(u)$. 
That is, $t>\Phi^{-1}(u)$, which shows \eqref{inv 2}.
Now, if $\Phi^{-1}(u)=0$, then the first inequality in \eqref{inverse ineq} is true by \eqref{lim_0}. 
If $t=\Phi^{-1}(u)>0$, then, using \eqref{inv 3}, we have that $\Phi(\Phi^{-1}(u))=\Phi(t)\le u$,
which is the first inequality in \eqref{inverse ineq}.
\end{proof}

For $\Phi, \Psi\in\biP$, 
we write $\Phi\approx\Psi$
if there exists a positive constant $C$ such that
\begin{equation*} 
     \Phi(C^{-1}t)\le\Psi(t)\le\Phi(Ct)
     \quad\text{for all}\ t\in[0,\infty].
\end{equation*} 
For functions $P,Q:[0,\infty]\to[0,\infty]$, 
we write $P\sim Q$ 
if there exists a positive constant $C$ such that
\begin{equation*} 
     C^{-1}P(t)\le Q(t)\le CP(t)
     \quad\text{for all}\ t\in[0,\infty].
\end{equation*} 
Then, for $\Phi,\Psi\in\biP$, 
\begin{equation}\label{approx equiv}
 \Phi\approx\Psi \quad \Leftrightarrow \quad \Phi^{-1}\sim\Psi^{-1}.
\end{equation}
Actually we have the following lemma.

\begin{lem}\label{lem:inverse}
Let $\Phi,\Psi\in\biP$, and let $C$ be a positive constant.
Then 
\begin{equation*}
 \Phi(t)\le\Psi(Ct) \quad\text{for all $t\in[0,\infty]$} 
\end{equation*}
if and only if
\begin{equation*}
 \Psi^{-1}(u)\le C\Phi^{-1}(u) \quad\text{for all $u\in[0,\infty]$}.
\end{equation*}
\end{lem}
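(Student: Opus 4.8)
The plan is to prove both implications directly from the definition of the generalized inverse in Definition~2.3, using Proposition~2.4 as the main tool to transfer inequalities between $\Phi$ and $\Phi^{-1}$. I would first dispose of the trivial cases $u=\infty$ and $u=0$ (where both inequalities hold automatically by \eqref{inverse} and the positivity/finiteness of the inverses stated just after Definition~2.3), so that we may work with $u\in(0,\infty)$ throughout.

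For the forward direction, assume $\Phi(t)\le\Psi(Ct)$ for all $t\in[0,\infty]$ and fix $u\in(0,\infty)$. I want to show $\Psi^{-1}(u)\le C\Phi^{-1}(u)$. Set $t_0=\Phi^{-1}(u)$. By Proposition~2.4 we have $\Phi(t_0)=\Phi(\Phi^{-1}(u))\le u$, and hence by hypothesis $\Psi(Ct_0)=\Psi(C\Phi^{-1}(u))\le\Phi(\Phi^{-1}(u))\le u$ — wait, the hypothesis runs the other way; rather $\Phi(Ct_0)$ is not what we want. Let me instead argue via implication \eqref{inv 1} of Proposition~2.4 applied to $\Psi$: it suffices to verify $\Psi(C\Phi^{-1}(u))\le u$, for then \eqref{inv 1} gives $C\Phi^{-1}(u)\le\Psi^{-1}(u)$ — again the wrong direction. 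The correct route: I will show the contrapositive shape, namely that for every $s$ with $\Psi(s)>u$ we have $s>C\Phi^{-1}(u)$, which yields $\Psi^{-1}(u)=\inf\{s:\Psi(s)>u\}\ge C\Phi^{-1}(u)$. So suppose $\Psi(s)>u$. Write $s=Ct$ with $t=s/C$. If $t<\Phi^{-1}(u)$ then by \eqref{inv 3} of Proposition~2.4 we would have $\Phi(t)\le u$; but combining with $\Psi(Ct)=\Psi(s)>u\ge\Phi(t)$ is not yet a contradiction. Here is where I actually need the hypothesis in the form it is given: from $\Phi(t)\le\Psi(Ct)$ we cannot bound $\Psi(Ct)$ above by $\Phi(t)$. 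The clean argument is therefore the reverse: suppose for contradiction $\Psi^{-1}(u)>C\Phi^{-1}(u)$; pick $t$ with $\Phi^{-1}(u)<t<\Psi^{-1}(u)/C$, so that $Ct<\Psi^{-1}(u)$, whence by \eqref{inv 3} $\Psi(Ct)\le u$; but $t>\Phi^{-1}(u)$ gives $\Phi(t)>u$ by \eqref{inv 2}, contradicting $\Phi(t)\le\Psi(Ct)\le u$. This closes the forward direction (and the value $\Phi^{-1}(u)$ is finite and positive, so such a $t$ exists).

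For the converse, assume $\Psi^{-1}(u)\le C\Phi^{-1}(u)$ for all $u$ and fix $t\in[0,\infty]$; I must show $\Phi(t)\le\Psi(Ct)$. Handling $t=0$ by \eqref{lim_0} and $t=\infty$ by \eqref{left cont infty}/\eqref{left cont b}, take $t\in(0,\infty)$. Put $u=\Psi(Ct)$. If $u=\infty$ there is nothing to prove, so assume $u<\infty$. By \eqref{inv 1} applied to $\Psi$ (with the roles $\Psi(Ct)\le u$ trivially an equality), $Ct\le\Psi^{-1}(u)\le C\Phi^{-1}(u)$, hence $t\le\Phi^{-1}(u)$, and then \eqref{inv 3} of Proposition~2.4 gives $\Phi(t)\le u=\Psi(Ct)$, as desired.

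The main obstacle, and the step requiring care, is the forward direction: the generalized inverse is only right-continuous while $\Phi$ is only left-continuous, so one cannot simply "apply $\Phi^{-1}$ to both sides" of the hypothesis. The contradiction argument above, drawing on the four one-directional implications \eqref{inv 1}--\eqref{inv 3} packaged in Proposition~2.4, is the way to navigate this mismatch of one-sided continuities cleanly; once those implications are in hand the rest is bookkeeping on the endpoint cases $u\in\{0,\infty\}$ and $t\in\{0,\infty\}$.
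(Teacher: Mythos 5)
Your proposal is correct, and the converse direction is essentially the paper's own argument (set $u=\Psi(Ct)$, use \eqref{inv 1} for $\Psi$ and \eqref{inv 3} for $\Phi$). The forward direction, however, takes a genuinely different route: the paper substitutes $t=\Psi^{-1}(u)$ and writes the one-line chain $\Psi^{-1}(u)/C=t/C\le\Phi^{-1}(\Phi(t/C))\le\Phi^{-1}(\Psi(t))\le\Phi^{-1}(u)$, using both halves of \eqref{inverse ineq} together with the hypothesis and the monotonicity of $\Phi^{-1}$, whereas you argue by contradiction, inserting a point $t$ with $\Phi^{-1}(u)<t<\Psi^{-1}(u)/C$ and deriving $\Phi(t)>u\ge\Psi(Ct)$. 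Both are valid; the paper's version is shorter and avoids any case analysis, while yours makes explicit where the one-sided continuities could cause trouble. Two small repairs to your write-up: the implication you need, $t>\Phi^{-1}(u)\Rightarrow\Phi(t)>u$, is the contrapositive of \eqref{inv 1}, not \eqref{inv 2} (which is the reverse implication), so the citation should be corrected; and the case $u=0$ is \emph{not} automatic --- the claim $\Psi^{-1}(0)\le C\Phi^{-1}(0)$ amounts to $a(\Psi)\le Ca(\Phi)$ and genuinely uses the hypothesis --- but since your contradiction argument nowhere uses $u>0$, it covers $u=0$ verbatim, so simply drop the claim that this endpoint is trivial. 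Finally, the exploratory false starts in the forward direction should be excised from any final version; only the contradiction argument is the proof.
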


\begin{proof}
Let $\Phi(t)\le\Psi(Ct)$ for all $t\in[0,\infty]$. 
If $t=\Psi^{-1}(u)$, then 
by Proposition~\ref{prop:inverse} we have 
that $\Psi(t)=\Psi(\Psi^{-1}(u))\le u$ and that
\begin{equation*}
 \Psi^{-1}(u)/C=t/C\le\Phi^{-1}(\Phi(t/C))\le\Phi^{-1}(\Psi(t))\le\Phi^{-1}(u).
\end{equation*}

Conversely, let $\Psi^{-1}(u)\le C\Phi^{-1}(u)$ for all $u\in[0,\infty]$.
If $u=\Psi(t)$, then 
by Proposition~\ref{prop:inverse} we have 
$t\le \Psi^{-1}(\Psi(t))=\Psi^{-1}(u)$ and
\begin{equation*}
 \Phi(t/C)\le\Phi(\Psi^{-1}(u)/C)\le\Phi(\Phi^{-1}(u))\le u=\Psi(t).
\qedhere
\end{equation*}
\end{proof}

Next we recall the definition of the Young function and give its generalization.

\begin{defn}\label{defn:Young}
A function $\Phi\in\biPhi$ is called a Young function 
(or sometimes also called an Orlicz function) 
if 
$\Phi$ is convex on $[0,b(\Phi))$.
\end{defn}

By the convexity, 
any Young function $\Phi$ is continuous on $[0,b(\Phi))$ and strictly increasing on $[a(\Phi),b(\Phi)]$.
Hence $\Phi$ is bijective from $[a(\Phi),b(\Phi)]$ to $[0,\Phi(b(\Phi))]$.
Moreover, $\Phi$ is absolutely continuous on any closed subinterval in $[0,b(\Phi))$.
That is,
its derivative $\Phi'$ exists a.e. and
\begin{equation}\label{derivative}
 \Phi(t)=\int_0^t\Phi'(s)\,ds, \quad t\in[0,b(\Phi)).
\end{equation}

\begin{defn}\label{defn:iPy}
\begin{enumerate}
\item
Let $\iPy$ be the set of all Young functions.
\item
Let $\biPy$ be the set of all $\Phi\in\biP$ such that
$\Phi\approx\Psi$ for some $\Psi\in \iPy$.
\item
Let $\cY$ be the set of all Young functions such that $a(\Phi)=0$ and $b(\Phi)=\infty$.
\end{enumerate}
\end{defn}

For $\Phi\in\biPy$, we define the Orlicz space $\LP(\R^n)$ and the weak Orlicz space $\wLP(\R^n)$.
Let $L^0(\R^n)$ be the set of all complex valued measurable functions on $\R^n$.
\begin{defn}\label{defn:LP}
For a function $\Phi\in\biPy$, let
\begin{align*}
  \LP(\R^n)
  &= \left\{ f\in L^0(\R^n):
     \int_{\R^n} \Phi(\epsilon |f(x)|)\,dx<\infty
                 \;\text{for some}\; \epsilon>0 
    \right\}, \\
  \|f\|_{\LP} &=
  \inf\left\{ \lambda>0: 
    \int_{\R^n} \!\Phi\!\left(\frac{|f(x)|}{\lambda}\right) dx
      \le 1
      \right\}, \\
  \wLP(\Omega)
  &= \left\{ f\in L^0(\R^n):
     \sup_{t\in(0,\infty)}\Phi(t)\,m(\epsilon f, t)<\infty
                 \;\text{for some}\; \epsilon>0 
    \right\}, \\
  \|f\|_{\wLP} &=
  \inf\left\{ \lambda>0: 
    \sup_{t\in(0,\infty)}\Phi(t)\,m\!\left(\frac{f}{\lambda}, t\right)
      \le 1
      \right\}, \\
  &\text{where} \quad m(f,t)=|\{x\in\R^n:|f(x)|>t\}|.
\end{align*}
\end{defn}
Then 
$\|\cdot\|_{\LP}$ and $\|\cdot\|_{\wLP}$ 
are quasi-norms and $\LP(\R^n)\subset L^1_{\loc}(\R^n)$.
If $\Phi\in\iPy$, then $\|\cdot\|_{\LP}$ is a norm 
and thereby $\LP(\R^n)$ is a Banach space.
For $\Phi,\Psi\in\biPy$,
if $\Phi\approx\Psi$, then $\LP(\R^n)=\LPs(\R^n)$ and $\wLP(\R^n)=\wLPs(\R^n)$
with equivalent quasi-norms, respectively.
Orlicz spaces are introduced by \cite{Orlicz1932,Orlicz1936}.
For the theory of Orlicz spaces,
see \cite{Kita2009,Kokilashvili-Krbec1991,Krasnoselsky-Rutitsky1961,Maligranda1989,Rao-Ren1991}
for example.

We note that, for any Young function $\Phi$, we have that
\begin{equation*}
 \sup_{t\in(0,\infty)}\Phi(t)\,m(f,t)
 =
 \sup_{t\in(0,\infty)}t\,m(\Phi(|f|),t),
\end{equation*}
and then
\begin{align*}
  \|f\|_{\wLP} &=
  \inf\left\{ \lambda>0: 
    \sup_{t\in(0,\infty)}\Phi(t)\,m\!\left(\frac{f}{\lambda}, t\right)
      \le 1
      \right\} \\
  &=
  \inf\left\{ \lambda>0: 
    \sup_{t\in(0,\infty)}t\;m\!\left(\Phi\left(\frac{|f|}{\lambda}\right), t\right)
      \le 1
      \right\}.
\end{align*}
For the above equality, see \cite[Proposition~4.2]{Kawasumi-Nakai-preprint} for example.

\begin{defn}\label{defn:D2 n2}
\begin{enumerate}
\item 
A function $\Phi\in\biP$ is said to satisfy the $\Delta_2$-condition,
denote $\Phi\in\bdtwo$, 
if there exists a constant $C>0$ such that
\begin{equation}\label{Delta2}
 \Phi(2t)\le C\Phi(t) 
 \quad\text{for all } t>0.
\end{equation}
\item
A function $\Phi\in\biP$ is said to satisfy the $\nabla_2$-condition,
denote $\Phi\in\bntwo$, 
if there exists a constant $k>1$ such that
\begin{equation}\label{nabla2}
 \Phi(t)\le\frac1{2k}\Phi(kt) 
 \quad\text{for all } t>0.
\end{equation}
\item
Let $\Delta_2=\iPy\cap\bdtwo$ and $\nabla_2=\iPy\cap\bntwo$.
\end{enumerate}
\end{defn}

\begin{rem}\label{rem:D2 n2}
\begin{enumerate}
\item 
$\dtwo\subset\cY$ and $\bntwo\subset\biPy$ (\cite[Lemma~1.2.3]{Kokilashvili-Krbec1991}).

\item
Let $\Phi\in\biPy$.
Then
$\Phi\in\bdtwo$ if and only if $\Phi\approx\Psi$ for some $\Psi\in\dtwo$,
and,  
$\Phi\in\bntwo$ if and only if $\Phi\approx\Psi$ for some $\Psi\in\ntwo$.

\item
Let $\Phi\in\iPy$.
Then $\Phi\in\dtwo$ if and only if $\Cic(\R^n)$ is dense in $\LP(\R^n)$,
and, $\Phi\in\ntwo$ if and only if the Hardy-Littlewood maximal operator $M$ is bounded on $\LP(\R^n)$.

\item
Let $\Phi\in\iPy$.
Then
$\Phi^{-1}$ satisfies the doubling condition by its concavity,
that is, 
\begin{equation}\label{Phi-1 doubl}
 \Phi^{-1}(u)\le\Phi^{-1}(2u)\le2\Phi^{-1}(u)
 \quad\text{for all $u\in[0,\infty]$}.
\end{equation}
\end{enumerate}
\end{rem}

The following theorem is known,
see \cite[Theorem~1.2.1]{Kokilashvili-Krbec1991} for example.

\begin{thm}\label{thm:M}
Let $\Phi\in\biPy$.
Then 
$M$ is bounded from $\LP(\R^n)$ to $\wLP(\R^n)$,
that is,
there exists a positive constant $C_0$
such that, for all $f\in\LP(\R^n)$,
\begin{equation}\label{C0weak}
 \|Mf\|_{\wLP}\le C_0\|f\|_{\LP}.
\end{equation}
Moreover, if $\Phi\in\bntwo$, then
$M$ is bounded on $\LP(\R^n)$,
that is,
there exists a positive constant $C_0$
such that, for all $f\in\LP(\R^n)$,
\begin{equation}\label{C0}
 \|Mf\|_{\LP}\le C_0\|f\|_{\LP}.
\end{equation}
\end{thm}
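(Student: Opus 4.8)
The plan is to reduce everything to a genuine Young function and then run the classical argument built on the weak $(1,1)$ inequality for $M$.

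For \eqref{C0weak}: since $\Phi\in\biPy$ there is $\Psi\in\iPy$ with $\Phi\approx\Psi$, so $\LP(\R^n)=\LPs(\R^n)$ and $\wLP(\R^n)=\wLPs(\R^n)$ with equivalent quasi-norms, and I may assume $\Phi\in\iPy$; by homogeneity of $M$ and of $\|\cdot\|_{\LP}$ I may also assume $\|f\|_{\LP}=1$, whence $\int_{\R^n}\Phi(|f|)\,dx\le1$. Splitting $f=f\chi_{\{|f|\le\lambda/2\}}+f\chi_{\{|f|>\lambda/2\}}$ and invoking the classical weak $(1,1)$ estimate for $M$ gives a dimensional constant $c_n\ge1$ with
\begin{equation*}
 m(Mf,\lambda)\le\frac{c_n}{\lambda}\int_{\{|f|>\lambda/2\}}|f(y)|\,dy,\qquad\lambda>0.
\end{equation*}
Since $\Phi$ is convex with $\Phi(0)=0$, $\Phi(u)/u$ is nondecreasing, so $|f(y)|\le\bigl(s/\Phi(s)\bigr)\Phi(|f(y)|)$ whenever $|f(y)|>s$ and $\Phi(s)>0$; taking $\lambda=2s$ above gives $\Phi(s)\,m(Mf,2s)\le c_n$ for all $s>0$. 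Convexity also gives $\Phi(\mu t)\ge\mu\Phi(t)$ for $\mu\ge1$, so with $C_0=2c_n$ one has $\Phi(t)\le c_n^{-1}\Phi(C_0t/2)$ and hence $\Phi(t)\,m(Mf,C_0t)\le c_n^{-1}c_n=1$ for every $t>0$; by the definition of $\|\cdot\|_{\wLP}$ this says $\|Mf\|_{\wLP}\le C_0$, and \eqref{C0weak} follows by homogeneity.

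For \eqref{C0}, Remark~\ref{rem:D2 n2} lets me reduce via $\approx$ to $\Phi\in\ntwo$: a Young function with $\Phi(t)\le\frac1{2k}\Phi(kt)$ for all $t>0$ and some $k>1$, and $\Phi(v)=\int_0^v\Phi'$ by \eqref{derivative}. The extra ingredient is the elementary estimate
\begin{equation*}
 \int_0^v\frac{\Phi'(u)}{u}\,du\le\frac{2k\,\Phi(v)}{v},\qquad 0<v<b(\Phi),
\end{equation*}
obtained by writing $(0,v)=\bigcup_{j\ge0}[vk^{-(j+1)},vk^{-j}]$, bounding $\Phi'(u)/u\le k^{j+1}v^{-1}\Phi'(u)$ on the $j$-th interval so its contribution is $\le k^{j+1}v^{-1}\Phi(vk^{-j})$, iterating the $\ntwo$-inequality to get $\Phi(vk^{-j})\le(2k)^{-j}\Phi(v)$, and summing the geometric series $\sum_{j\ge0}k\,2^{-j}\Phi(v)/v$. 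Now assume $\|f\|_{\LP}=1$, so $\int_{\R^n}\Phi(|f|)\,dx\le1$. Combining the layer-cake identity $\int_{\R^n}\Phi(Mf/C)\,dx=\int_0^\infty\Phi'(u)\,m(Mf,Cu)\,du$, the weak $(1,1)$ refinement above, Fubini's theorem, and then the displayed estimate (with $C\ge2$, using once more that $\Phi(u)/u$ is nondecreasing) yields
\begin{equation*}
 \int_{\R^n}\Phi\!\left(\frac{Mf(x)}{C}\right)dx\le\frac{2k\,c_n}{C}\int_{\R^n}\Phi(|f(x)|)\,dx\le\frac{2k\,c_n}{C}.
\end{equation*}
Taking $C=2kc_n$ gives $\int_{\R^n}\Phi(Mf/C)\,dx\le1$, i.e.\ $\|Mf\|_{\LP}\le C$, and \eqref{C0} follows with $C_0=2kc_n$ by homogeneity.

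The reductions via $\approx$ and the Fubini/layer-cake bookkeeping are routine; the step where the hypotheses genuinely enter is the passage from a pointwise weak bound for $M$ to an Orlicz-norm bound. For \eqref{C0weak} this uses only the convexity of $\Phi$, whereas for \eqref{C0} it crucially uses $\Phi\in\bntwo$ through the inequality $\int_0^v\Phi'(u)u^{-1}\,du\ls\Phi(v)/v$ — the analytic heart of the proof, and the point I expect to require the most care (one must also check that when $b(\Phi)<\infty$ the normalization $\|f\|_{\LP}=1$ forces $Mf/C<b(\Phi)$ a.e., so that the layer-cake identity is legitimate). A more structural alternative would invoke Calder\'on's rearrangement inequality for $M$ together with the boundedness of the associated averaging operator on $\LP(\R^n)$, which holds precisely when the lower Boyd index of $\Phi$ exceeds $1$, equivalently $\Phi\in\bntwo$; the direct argument above is, however, self-contained given what has already been established.
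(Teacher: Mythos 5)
Your proof is correct, but there is nothing in the paper to compare it with: Theorem~\ref{thm:M} is stated as a known result and simply referred to \cite[Theorem~1.2.1]{Kokilashvili-Krbec1991}, so the paper contains no proof of it. What you supply is the standard self-contained argument: for \eqref{C0weak}, the truncation $f=f\chi_{\{|f|\le\lambda/2\}}+f\chi_{\{|f|>\lambda/2\}}$ plus the weak $(1,1)$ inequality and the monotonicity of $\Phi(u)/u$; for \eqref{C0}, the layer-cake formula combined with the estimate $\int_0^v\Phi'(u)u^{-1}\,du\le 2k\,\Phi(v)/v$, which is the classical quantitative form of the $\nabla_2$-condition and is exactly where $\Phi\in\bntwo$ enters — this is essentially the proof in the cited reference. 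The reductions are legitimate: $\Phi\approx\Psi$ with $\Psi\in\iPy$ (resp.\ $\Psi\in\ntwo$, by Remark~\ref{rem:D2 n2}) changes norms only by constants, and the normalization $\|f\|_{\LP}=1$ gives $\int_{\R^n}\Phi(|f|)\,dx\le1$ by left continuity and convexity. The boundary issues you flag are real but minor and resolve as you indicate: if $b(\Phi)<\infty$ then $\|f\|_{\LP}=1$ forces $|f|\le b(\Phi)$ a.e., hence $Mf\le b(\Phi)$ and $Mf/C\le b(\Phi)/2<b(\Phi)$ for $C\ge2$, so the layer-cake identity only sees $u<b(\Phi)$; when $\Phi(s)=\infty$ the weak-type bound holds because $m(Mf,2s)=0$, and when $\Phi(s)=0$ it is trivial (with the usual convention $0\cdot\infty=0$ in the weak quasi-norm); finally, if $\Phi(b(\Phi))<\infty$ one may have $|f|=b(\Phi)$ on a set of positive measure, but your key inequality extends to $v=b(\Phi)$ verbatim, since $\Phi(b(\Phi))=\int_0^{b(\Phi)}\Phi'(u)\,du$ by left continuity, so the Fubini step is safe. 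With the choice $C_0=\max(2,2kc_n)$ all constants are consistent.
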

See also \cite{Cianchi1999,Kita1996PAMS,Kita1997MathNachr}
for the Hardy-Littlewood maximal operator on Orlicz spaces.

\section{Main results}\label{sec:main}

The following theorem is an extension of the result in \cite{Nakai2001Taiwan}
and has been proved in \cite{Deringoz-Guliyev-Nakai-Sawano-Shi-preprint}
essentially, by using Hedberg's method in \cite{Hedberg1972}.

\begin{thm}[\cite{Deringoz-Guliyev-Nakai-Sawano-Shi-preprint}]\label{thm:Ir}
Let $\rho:(0,\infty)\to(0,\infty)$ satisfy \eqref{int rho} and \eqref{sup rho},
and let $\Phi,\Psi\in\biPy$.
Assume that 
there exists a positive constant $A$ such that,
for all $r\in(0,\infty)$,
\begin{equation}\label{Ir A}
 \int_0^r\frac{\rho(t)}{t}\,dt\;{\Phi}^{-1}(1/r^n) 
  +\int_r^{\infty}\frac{\rho(t)\,\Phi^{-1}(1/t^n)}{t}\,dt
 \le
 A\Psi^{-1}(1/r^n). 
\end{equation}
Then, for any positive constant $C_0$, there exists a positive constant $C_1$ such that, 
for all $f\in L^{\Phi}(\R^n)$ with $f\not\equiv0$,
\begin{equation}\label{Ir pointwise}
     \Psi\left(\frac{|\Ir f(x)|}{C_1\|f\|_{\LP}}\right)
     \le \Phi\left(\frac{Mf(x)}{C_0\|f\|_{\LP}}\right).
\end{equation}
Consequently, $I_{\rho}$ is bounded 
from $L^{\Phi}(\R^n)$ to $\wLPs(\R^n)$. 
Moreover, if $\Phi\in\bntwo$, then
$I_{\rho}$ is bounded 
from $\LP(\R^n)$ to $\LPs(\R^n)$. 
\end{thm}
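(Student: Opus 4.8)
The plan is to follow Hedberg's method: first establish the pointwise estimate \eqref{Ir pointwise}, and then deduce the two boundedness statements from it together with Theorem~\ref{thm:M}. By homogeneity we may assume $\|f\|_{\LP}=1$ (and $f\not\equiv0$), and by \eqref{approx equiv} and Lemma~\ref{lem:inverse} we may replace $\Phi$ and $\Psi$ by $\approx$-equivalent members of $\iPy$, at the cost of harmless constants; the degenerate cases $a(\Phi)>0$ or $b(\Phi)<\infty$, in which $\LP$ lies close to $L^{\infty}$ and in which \eqref{Ir A} itself constrains $\rho$ (forcing, e.g., $\int_0^{\infty}\rho(t)/t\,dt<\infty$), are reduced to the case $\Phi,\Psi\in\cY$ by a routine argument. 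Throughout I would use the elementary Orlicz bound $\mint_B|f|\le\Phi^{-1}(1/|B|)$, valid for every ball $B$, which follows from Jensen's inequality applied to the convex $\Phi$ together with $\int_{\R^n}\Phi(|f|)\le1$, as well as the doubling of $\Phi^{-1}$ and $\Psi^{-1}$ from Remark~\ref{rem:D2 n2}(iv).

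For the pointwise estimate, fix $x$ with $0<Mf(x)<\infty$ (otherwise \eqref{Ir pointwise} is trivial at $x$). For a cut-off radius $\delta>0$ split $\Ir f(x)=I_1+I_2$ into the integrals over $B(x,\delta)$ and its complement, and decompose each into the dyadic annuli $\{2^{-j-1}\delta\le|x-y|<2^{-j}\delta\}$ and $\{2^{j}\delta\le|x-y|<2^{j+1}\delta\}$, $j\ge0$. On an annulus with inner radius $s$ one has $\rho(|x-y|)/|x-y|^n\ls s^{-n}\sup_{s\le t\le2s}\rho(t)$, and \eqref{sup rho} replaces the supremum by $\int_{K_1s}^{K_2s}\rho(t)/t\,dt$; bounding the average of $|f|$ over the concentric ball by $Mf(x)$ in $I_1$ and by $\Phi^{-1}(1/|B|)$ in $I_2$ (using also the doubling of $\Phi^{-1}$), and summing the annuli — the intervals $[K_1 2^{j}\delta,K_2 2^{j}\delta]$ cover their union with bounded multiplicity — yields
\[
 |I_1|\ls Mf(x)\int_0^{c\delta}\frac{\rho(t)}{t}\,dt,
 \qquad
 |I_2|\ls\int_0^{\delta}\frac{\rho(t)}{t}\,dt\;\Phi^{-1}(1/\delta^n)+\int_{\delta}^{\infty}\frac{\rho(t)\,\Phi^{-1}(1/t^n)}{t}\,dt.
\]
By \eqref{Ir A} the bound for $|I_2|$ is $\ls A\Psi^{-1}(1/\delta^n)$; and since \eqref{Ir A} holds at every radius, also $\int_0^{c\delta}\rho(t)/t\,dt\le A\Psi^{-1}(1/(c\delta)^n)/\Phi^{-1}(1/(c\delta)^n)\ls A\Psi^{-1}(1/\delta^n)/\Phi^{-1}(1/\delta^n)$ by doubling. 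Hence, for every $\delta>0$,
\[
 |\Ir f(x)|\ls A\,\Psi^{-1}(1/\delta^n)\Bigl(\frac{Mf(x)}{\Phi^{-1}(1/\delta^n)}+1\Bigr).
\]

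Now comes the Hedberg optimization: choose $\delta>0$ with $\Phi^{-1}(1/\delta^n)\le Mf(x)/(2C_0)<\Phi^{-1}(2^n/\delta^n)$, which is possible by the monotonicity and right continuity of $\Phi^{-1}$ (out-of-range values of $Mf(x)$ either make the right-hand side of \eqref{Ir pointwise} infinite, or fall under the degenerate-case reduction). By the doubling of $\Phi^{-1}$ this forces $Mf(x)/\Phi^{-1}(1/\delta^n)\le4C_0$, and the left inequality together with the definition of $\Phi^{-1}$ forces $1/\delta^n\le\Phi(Mf(x)/C_0)$. Feeding these into the last display gives $|\Ir f(x)|\ls A(4C_0+1)\Psi^{-1}(\Phi(Mf(x)/C_0))$, and applying $\Psi$ and \eqref{inverse ineq} yields $\Psi(|\Ir f(x)|/C_1)\le\Phi(Mf(x)/C_0)$ for $C_1=cA(4C_0+1)$; undoing the normalization and the initial reductions gives \eqref{Ir pointwise}. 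For the boundedness, apply \eqref{Ir pointwise} with $C_0$ chosen to be the constant $C_0$ of Theorem~\ref{thm:M}: for the weak estimate \eqref{C0weak} shows that $Mf/(C_0\|f\|_{\LP})$ has $\wLP$-quasinorm at most $1$, so that $m(\Psi(|\Ir f|/(C_1\|f\|_{\LP})),u)\le m(\Phi(Mf/(C_0\|f\|_{\LP})),u)\le1/u$ for all $u>0$, which by the identity $\sup_{t>0}\Psi(t)\,m(g,t)=\sup_{u>0}u\,m(\Psi(|g|),u)$ gives $\|\Ir f\|_{\wLPs}\le C_1\|f\|_{\LP}$; if moreover $\Phi\in\bntwo$, then $M$ is bounded on $\LP$ by Theorem~\ref{thm:M}, so \eqref{C0} and integrating \eqref{Ir pointwise} over $\R^n$ give $\int_{\R^n}\Psi(|\Ir f(x)|/(C_1\|f\|_{\LP}))\,dx\le\int_{\R^n}\Phi(Mf(x)/(C_0\|f\|_{\LP}))\,dx\le1$, i.e.\ $\|\Ir f\|_{\LPs}\le C_1\|f\|_{\LP}$.

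The step I expect to be the main obstacle is the pointwise estimate, and within it two pieces of bookkeeping: making the mismatched integration limits produced by \eqref{sup rho} and the dyadic sums collapse back to the exact integrals appearing in \eqref{Ir A} — which works precisely because \eqref{Ir A} is assumed at all radii and $\Phi^{-1},\Psi^{-1}$ are doubling — and handling O'Neil's generalized inverses with care, in particular making the choice of $\delta$ in the Hedberg step legitimate when $\Phi$ is a degenerate Young function.
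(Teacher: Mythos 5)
Your proposal is correct and follows essentially the same route the paper indicates for Theorem~\ref{thm:Ir}: the reduction to $\Phi,\Psi\in\iPy$ as in Remark~\ref{rem:Ir}, Hedberg's method (dyadic annuli plus \eqref{sup rho} and \eqref{Ir A}) to get the pointwise bound \eqref{Ir pointwise}, and then Theorem~\ref{thm:M} to pass to the weak and strong boundedness. The edge cases you flag ($a(\Phi)>0$, $b(\Phi)<\infty$) are indeed routine — the latter makes the right-hand side of \eqref{Ir pointwise} infinite, and the former is handled by letting $\delta\to\infty$ in your intermediate estimate, using that \eqref{Ir A} then forces $\int_0^\infty\rho(t)\,dt/t<\infty$ and $a(\Psi)>0$ — so no genuine gap remains.
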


\begin{rem}\label{rem:Ir}
In \cite{Deringoz-Guliyev-Nakai-Sawano-Shi-preprint}
the condition that $\Phi,\Psi\in\iPy$ was assumed.
We can extend it to $\Phi,\Psi\in\biPy$ as Theorem~\ref{thm:Ir}.
Actually, if \eqref{Ir A} holds for some $\Phi,\Psi\in\biPy$,
then take $\Phi_1,\Psi_1\in\iPy$ with $\Phi\approx\Phi_1$ and $\Psi\approx\Psi_1$.
Then,
instead of $\Phi$ and $\Psi$, 
$\Phi_1$ and $\Psi_1$ satisfy \eqref{Ir A} for some positive constant $A'$
by \eqref{approx equiv}.
\end{rem}

Here, we give some examples of the pair of $(\rho,\Phi,\Psi)$ 
which satisfies the assumption in Theorem~\ref{thm:Ir}.
For other examples, see \cite{Nakai2001SCMJ}.
See also \cite{Mizuta-Nakai-Ohno-Shimomura2010JMSJ} for the boundedness of $\Ir$ 
on Orlicz space $\LP(\Omega)$ with bounded domain $\Omega\subset\R^n$.

\begin{exmp}\label{exmp:Ir1}
If $\rho(r)=r^{\alpha}$, $\Phi(t)=t^p$ and $\Psi(t)=t^q$ with 
$p,q\in[1,\infty)$ and $0<\alpha<n/p$, 
then
\begin{equation*}
 \int_0^r\frac{\rho(t)}{t}\,dt\;{\Phi}^{-1}(1/r^n) 
 \sim \int_r^{\infty}\frac{\rho(t)\,\Phi^{-1}(1/t^n)}{t}\,dt
 \sim r^{\alpha-n/p}
 \quad\text{and}\quad
 \Psi^{-1}(1/r^n)=r^{-n/q}. 
\end{equation*}
In this case,
\begin{equation*}
 \text{``\eqref{Ir A}"}
 \quad\Leftrightarrow\quad 
 r^{\alpha-n/p}\ls r^{-n/q}, \ r\in(0,\infty)
 \quad\Leftrightarrow\quad 
 \alpha-n/p=-n/q.
\end{equation*}
Therefore, the Hardy-Littlewood-Sobolev theorem is a corollary of Theorem~\ref{thm:Ir}.
\end{exmp}

\begin{exmp}\label{exmp:Ir2}
Let $\rho$ and $\Phi$ be as in \eqref{rho log} and in \eqref{Phi exp}, respectively,
and let $\Psi$ be as in \eqref{Phi exp} with $q$ instead of $p$.
Assume that $\alpha,p,q\in(0,\infty)$ and $-1/p+\alpha=-1/q$.
Then
\begin{equation*}
 \int_0^r\frac{\rho(t)}{t}\,dt\sim
 \begin{cases}
 (\log(1/r))^{-\alpha} & \text{for small $r>0$}, \\
 (\log r)^{\alpha} & \text{for large $r>0$},
 \end{cases}
\end{equation*}
and
\begin{equation}\label{Phi Psi}
 \Phi^{-1}(1/r^{n})\sim
 \begin{cases}
 (\log(1/r))^{1/p}, \\
 (\log r)^{-1/p},
 \end{cases}
 \Psi^{-1}(1/r^{n})\sim
 \begin{cases}
 (\log(1/r))^{1/q} & \text{for small $r>0$}, \\
 (\log r)^{-1/q} & \text{for large $r>0$}.
 \end{cases}
\end{equation}
In this case we have
\begin{multline*}
  \int_0^r\frac{\rho(t)}{t}\,dt\;{\Phi}^{-1}(1/r^n)
 \sim
  \int_r^{\infty}\frac{\rho(t)\,\Phi^{-1}(1/t^n)}{t}\,dt \\
 \sim
 \begin{cases}
 (\log(1/r))^{-\alpha+1/p} & \text{for small $r>0$}, \\
 (\log r)^{\alpha-1/p} & \text{for large $r>0$}.
 \end{cases}
\end{multline*}
Then the pair $(\rho,\Phi,\Psi)$ satisfies \eqref{Ir A},
that is, $I_{\rho}$ is bounded from $\exp L^p(\R^n)$ to $\exp L^q(\R^n)$.
\end{exmp}

\begin{exmp}\label{exmp:Ir3}
Let $\alpha\in(0,n)$, $p,q\in[1,\infty)$ and $-n/p+\alpha=-n/q$.
Let
\begin{equation*}
 \rho(r)=
 \begin{cases}
 r^{\alpha} & \text{for small $r>0$}, \\
 e^{-r} & \text{for large $r>0$}.
 \end{cases}
\end{equation*}
Then 
\begin{equation*}
 \int_0^r\frac{\rho(t)}{t}\,dt\sim
 \begin{cases}
 r^{\alpha} & \text{for small $r>0$}, \\
 1 & \text{for large $r>0$}.
 \end{cases}
\end{equation*}
\begin{enumerate}
\item 
If $\Phi(r)=r^p$ and $\Psi(r)=\max(r^p,r^q)$, 
then \eqref{Ir A} holds.
In this case $\LP(\R^n)=L^p(\R^n)$ and $\LPs(\R^n)=L^p(\R^n)\cap L^q(\R^n)$.
\item
If $\Phi(r)=\max(0,r^p-1)$ and $\Psi(r)=\max(0,r^q-1)$, 
then \eqref{Ir A} holds,
since 
\begin{equation*}
 \Phi^{-1}(u)\sim
 \begin{cases}
 1 & \text{for small $u>0$}, \\
 u^{1/p} & \text{for large $u>0$},
 \end{cases}
 \ \
 \Phi^{-1}(1/r^n)\sim
 \begin{cases}
 r^{-n/p} & \text{for small $r>0$}, \\
 1 & \text{for large $r>0$}.
 \end{cases}
\end{equation*}
In this case $\LP(\R^n)=L^p(\R^n)+\Li(\R^n)$ and $\LPs(\R^n)=L^q(\R^n)+\Li(\R^n)$.
\end{enumerate}
\end{exmp}

A function $\Phi\in\cY$ is called an N-function if
\begin{equation*}
 \lim_{t\to+0}\frac{\Phi(t)}t=0,
 \quad
 \lim_{t\to\infty}\frac{\Phi(t)}t=\infty.
\end{equation*}
We say that a function $\theta:(0,\infty)\to(0,\infty)$ 
is almost increasing (resp. almost decreasing) if
there exists a positive constant $C$ such that, for all $r,s\in(0,\infty)$,
\begin{equation}\label{almost inc dec}
 \theta(r)\le C\theta(s) \quad
 (\text{resp.}\ \theta(s)\le C\theta(r)),
 \quad\text{if $r<s$}.
\end{equation}
Then we have the following corollary.

\begin{cor}\label{cor:Ir}
Let $1<s<\infty$ and $\rho:(0,\infty)\to(0,\infty)$.
Assume that $\rho$ satisfies \eqref{int rho}
and that 
$r\mapsto\rho(r)/r^{n/s-\epsilon}$ is almost decreasing for some positive constant $\epsilon$.
Then there exist an N-function $\Psi$ and a positive constant $C$ such that,
for all $r>0$,
\begin{equation}\label{s Psi}
 C^{-1}\Psi^{-1}\left(\frac1{r^n}\right)
 \le
 \frac1{r^{n/s}}\int_0^r\frac{\rho(t)}{t}\,dt
 \le
 C\Psi^{-1}\left(\frac1{r^n}\right).
\end{equation}
Moreover, $\Ir$ is bounded from $L^s(\R^n)$ to $\LPs(\R^n)$.
\end{cor}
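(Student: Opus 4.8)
The plan is to apply Theorem~\ref{thm:Ir} with $\Phi(t)=t^s$, so that $\LP(\R^n)=L^s(\R^n)$, and with $\Psi$ an N-function to be constructed from $\rho$. The first step is to define $\Psi$ via its inverse. Set
\begin{equation*}
 \eta(r)=\frac1{r^{n/s}}\int_0^r\frac{\rho(t)}{t}\,dt,
 \quad r>0,
\end{equation*}
which is finite by \eqref{int rho}. The natural candidate is to let $\Psi^{-1}(u)\sim\eta(r)$ when $u=1/r^n$, i.e. $\Psi^{-1}(u)=\eta(u^{-1/n})$ for $u\in(0,\infty)$, extended by $\Psi^{-1}(0)=0$ and $\Psi^{-1}(\infty)=\infty$. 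For this to determine an N-function (up to $\sim$, hence up to $\approx$ on the level of $\Psi$ via \eqref{approx equiv}, which is all Theorem~\ref{thm:Ir} needs by Remark~\ref{rem:Ir}), I must check that $u\mapsto\Psi^{-1}(u)$ is, up to constants, increasing, concave-like (equivalently $\Psi^{-1}(u)/u$ almost decreasing), with $\Psi^{-1}(u)/u\to\infty$ as $u\to0$ and $\Psi^{-1}(u)/u\to0$ as $u\to\infty$; these last two are exactly the N-function conditions dualized through the inverse.

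The key estimates come from the hypothesis that $r\mapsto\rho(r)/r^{n/s-\epsilon}$ is almost decreasing. First, this gives $\rho(t)/t\le C\,\rho(r)\,t^{n/s-\epsilon-1}/r^{n/s-\epsilon}$ for $t\le r$, so integrating in $t$ from $0$ to $r$ yields $\int_0^r \rho(t)/t\,dt\le C'\rho(r)/r^{-\epsilon}\cdot r^{-n/s}\cdot r^{n/s}$... more cleanly, $\int_0^r\frac{\rho(t)}{t}\,dt\ls \rho(r)\,r^{-(n/s-\epsilon)}\cdot r^{n/s-\epsilon}/(n/s-\epsilon)=C\rho(r)$, and similarly $\eta(r)\ls \rho(r)/r^{n/s}$. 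I would use this to show: (a) $\eta$ is almost increasing, because $\int_0^r\rho(t)/t\,dt$ is increasing while $r^{-n/s}$ decreases, and one balances the decrease using the almost-decreasing hypothesis on a shorter scale; (b) $r\mapsto\eta(r)/r^{-n/s}=\int_0^r\rho(t)/t\,dt$ is increasing, which translates to $u\mapsto\Psi^{-1}(u)/u$ being almost decreasing — this is the concavity/N-function-from-below part; (c) $\eta(r)\,r^{n/s}\to0$ as $r\to0$ (from $\int_0^1\rho(t)/t\,dt<\infty$) and $\eta(r)\,r^{n/s}=\int_0^r\rho/t\,dt\to\infty$ or at least $\eta(r)r^{n/s}$ grows, giving the two limit conditions. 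Once $\Psi$ is built, \eqref{s Psi} holds by construction.

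Finally, I would verify the hypothesis \eqref{Ir A} of Theorem~\ref{thm:Ir} for the pair $(\rho,\Phi,\Psi)$ with $\Phi(t)=t^s$, $\Phi^{-1}(1/t^n)=t^{-n/s}$. The first term $\int_0^r\frac{\rho(t)}{t}\,dt\,\Phi^{-1}(1/r^n)=r^{-n/s}\int_0^r\rho(t)/t\,dt=\eta(r)\sim\Psi^{-1}(1/r^n)$ matches immediately. For the second term $\int_r^{\infty}\frac{\rho(t)\,t^{-n/s}}{t}\,dt=\int_r^\infty\rho(t)\,t^{-n/s-1}\,dt$: using $\rho(t)/t^{n/s-\epsilon}\ls\rho(r)/r^{n/s-\epsilon}$ for $t\ge r$ gives $\int_r^\infty\rho(t)t^{-n/s-1}dt\ls \rho(r)r^{-(n/s-\epsilon)}\int_r^\infty t^{-\epsilon-1}dt\sim \rho(r)r^{-n/s}\ls \eta(r)/r^{?}$... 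I need this $\ls\Psi^{-1}(1/r^n)=\eta(r)$, and since $\rho(r)r^{-n/s}$ may be larger than $\eta(r)$ in general, here one must instead split $\int_r^\infty$ dyadically and use \eqref{sup rho} to control $\rho$ on each annulus by an integral of $\rho(t)/t$, converting the tail back into the same kind of average that defines $\eta$; summing the resulting geometric series (convergence again supplied by the almost-decreasing hypothesis with exponent $n/s-\epsilon<n/s$) yields the bound $C\eta(r)=C\Psi^{-1}(1/r^n)$. \textbf{The main obstacle} is precisely this second-term tail estimate: making the interplay between \eqref{sup rho} and the almost-decreasing condition on $\rho(r)/r^{n/s-\epsilon}$ produce a convergent dyadic sum that reproduces $\eta(r)$ rather than the pointwise quantity $\rho(r)r^{-n/s}$; once \eqref{Ir A} is in hand, the boundedness of $\Ir$ from $L^s(\R^n)$ to $\LPs(\R^n)$ is immediate from Theorem~\ref{thm:Ir}, since $\Phi(t)=t^s\in\bntwo$ for $s>1$.
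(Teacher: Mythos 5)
Your overall route is the same as the paper's: take $\Phi(t)=t^s$, build an N-function $\Psi$ so that \eqref{s Psi} holds (the paper quotes \cite[Theorem~3.5]{Arai-Nakai2017REMC} for this construction, while you sketch it by hand), verify \eqref{Ir A}, and invoke Theorem~\ref{thm:Ir}. However, as written the argument has a genuine gap, caused by applying the almost-decreasing hypothesis in the wrong direction. Almost decreasingness of $t\mapsto\rho(t)/t^{n/s-\epsilon}$ means $\rho(r)/r^{n/s-\epsilon}\le C\,\rho(t)/t^{n/s-\epsilon}$ for $t\le r$, i.e.\ it gives a \emph{lower} bound $\rho(t)\gs\rho(r)(t/r)^{n/s-\epsilon}$ on $(0,r]$, not the upper bound you used; consequently your ``key estimates'' $\int_0^r\rho(t)t^{-1}\,dt\ls\rho(r)$ and $\eta(r)\ls\rho(r)r^{-n/s}$ are false in general (for $\rho$ as in \eqref{rho log} one has $\int_0^r\rho(t)/t\,dt\sim(\log(1/r))^{-\alpha}\gg(\log(1/r))^{-\alpha-1}=\rho(r)$ for small $r$). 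What the hypothesis actually yields --- and it is precisely the inequality you single out as ``the main obstacle'' and leave unproved --- is the reverse bound $\rho(r)\ls\int_0^r\rho(t)/t\,dt$: assuming without loss of generality $\epsilon<n/s$, integrate the lower bound to get $\int_0^r\rho(t)/t\,dt\gs\rho(r)r^{-(n/s-\epsilon)}\int_0^r t^{n/s-\epsilon-1}\,dt\sim\rho(r)$; this is exactly \eqref{rho/r^k} in Remark~\ref{rem:cor:Ir}. With it your tail estimate closes in one line: $\int_r^\infty\rho(t)t^{-n/s-1}\,dt\ls\rho(r)r^{-n/s}\ls r^{-n/s}\int_0^r\rho(t)/t\,dt\sim\Psi^{-1}(1/r^n)$, so \eqref{Ir A} holds. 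No dyadic decomposition and no separate use of \eqref{sup rho} in the tail is needed; \eqref{sup rho} follows from the same remark and is only required so that Theorem~\ref{thm:Ir} is applicable, after which $\Phi(t)=t^s\in\ntwo$ gives the strong boundedness, as you say.

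A smaller slip occurs in the construction of $\Psi$: for $\Psi^{-1}(u)=\eta(u^{-1/n})$ to be (almost) increasing in $u$ you need $\eta$ to be almost \emph{decreasing} in $r$, not almost increasing as in your item (a). This is true, but its proof again uses the corrected inequalities: for $r<R$ split $\int_0^R=\int_0^r+\int_r^R$, bound the second piece by $C\rho(r)r^{-(n/s-\epsilon)}R^{n/s-\epsilon}$ via the almost-decreasing hypothesis, and then use $\rho(r)\ls\int_0^r\rho(t)/t\,dt$ to conclude $\eta(R)\ls\eta(r)$. The remaining N-function requirements (pseudo-concavity of $\Psi^{-1}$ and the limit conditions at $0$ and $\infty$) go through as you indicate, using \eqref{int rho}. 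So your plan is sound and coincides with the paper's, but the two displayed auxiliary estimates must be replaced by their correct counterparts before the proof is complete.
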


In the above, \eqref{s Psi} can be shown by
the same way as the proof of \cite[Theorem~3.5]{Arai-Nakai2017REMC}.
The boundedness of $\Ir$ from $L^s(\R^n)$ to $\LPs(\R^n)$ is proven by the following way.
First note that $\rho$ satisfies \eqref{sup rho} by Remark~\ref{rem:cor:Ir} below.
Let $\Phi(t)=t^s$.
Then we have
\begin{align*}
 \int_r^{\infty}\frac{\rho(t)\Phi^{-1}(1/t^n)}{t}\,dt
 &=
 \int_r^{\infty}\frac{\rho(t)/t^{n/s}}{t}\,dt
 \ls
 \frac{\rho(r)}{r^{n/s-\epsilon}}\int_r^{\infty}\frac{1}{t^{1+\epsilon}}\,dt \\
 &\sim
 \frac{\rho(r)}{r^{n/s}} 
 \ls
 \frac1{r^{n/s}}\int_0^r\frac{\rho(t)}{t}\,dt
 =
 \Phi^{-1}\left(\frac1{r^n}\right)\int_0^r\frac{\rho(t)}{t}\,dt,
\end{align*}
where we used \eqref{rho/r^k} below for the last inequality.
Combining this and \eqref{s Psi},
we have \eqref{Ir A}.
Then we have the conclusion by Theorem~\ref{thm:Ir}.

\begin{rem}\label{rem:cor:Ir}
If $r\mapsto\rho(r)/r^k$ is almost decreasing for some positive constant $k$,
then $\rho$ satisfies \eqref{sup rho}.
Actually,
\begin{equation}\label{rho/r^k}
 \sup_{r\le t\le2r}\rho(t)
 \sim
 r^k\sup_{r\le t\le2r}\frac{\rho(t)}{t^k}
 \ls
 r^k\int_{r/2}^r\frac{\rho(t)}{t^{k+1}}\,dt
 \sim
 \int_{r/2}^r\frac{\rho(t)}{t}\,dt.
\end{equation}
\end{rem}

Next we state the result on the operator $\Mr$ defined by \eqref{Mr}
in which we don't assume \eqref{int rho} or \eqref{sup rho}.

\begin{thm}\label{thm:Mr}
Let $\rho:(0,\infty)\to(0,\infty)$,
and let $\Phi,\Psi\in\biPy$.
\begin{enumerate}
\item 
Assume that 
there exists a positive constant $A$ such that,
for all $r\in(0,\infty)$,
\begin{equation}\label{Mr A}
 \left(\sup_{0<t\le r}\rho(t)\right){\Phi}^{-1}(1/r^n)
 \le
 A\Psi^{-1}(1/r^n). 
\end{equation}
Then, for any positive constant $C_0$, there exists a positive constant $C_1$ such that, 
for all $f\in L^{\Phi}(\R^n)$ with $f\not\equiv0$,
\begin{equation}\label{Mr pointwise}
     \Psi\left(\frac{\Mr f(x)}{C_1\|f\|_{\LP}}\right)
     \le \Phi\left(\frac{Mf(x)}{C_0\|f\|_{\LP}}\right).
\end{equation}
Consequently, $\Mr$ is bounded 
from $L^{\Phi}(\R^n)$ to $\wLPs(\R^n)$. 
Moreover, if $\Phi\in\bntwo$, then
$\Mr$ is bounded 
from $\LP(\R^n)$ to $\LPs(\R^n)$. 

\item
Conversely, 
if $\Mr$ is bounded from $\LP(\R^n)$ to $\wLPs(\R^n)$,
then
\eqref{Mr A} holds for some $A$ and all $r\in(0,\infty)$.
\end{enumerate}
\end{thm}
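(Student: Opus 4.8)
The plan is to prove the two parts separately, with part (i) following the Hedberg-type argument already used for Theorem~\ref{thm:Ir} and part (ii) being a direct test-function computation. For part (i), fix $f\in\LP(\R^n)$ with $f\not\equiv0$; by homogeneity we may assume $\|f\|_{\LP}=1$. Fix a ball $B(z,r)\ni x$. The key splitting is
\begin{equation*}
 \rho(r)\mint_{B(z,r)}|f(y)|\,dy
 \le
 \left(\sup_{0<t\le r}\rho(t)\right)Mf(x).
\end{equation*}
Taking the supremum over all balls $B(z,r)\ni x$ would be too crude because $\sup_{0<t\le r}\rho(t)$ depends on $r$; instead I would, for each fixed $r$, use \eqref{Mr A} to write $\sup_{0<t\le r}\rho(t)\le A\,\Psi^{-1}(1/r^n)/\Phi^{-1}(1/r^n)$, and then optimize in $r$. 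Concretely, set $\lambda=Mf(x)/C_0$ (if $Mf(x)=0$ there is nothing to prove) and choose $r$ so that $\Phi^{-1}(1/r^n)\sim 1/\lambda$, i.e. $1/r^n\sim\Phi(1/\lambda)$; this is where the estimate $\mint_{B(z,r)}|f|\le Mf(x)$ gets balanced against the size of $\rho$. Substituting gives $\rho(r)\mint_{B(z,r)}|f|\lesssim A\,\Psi^{-1}(\Phi(C_0^{-1}Mf(x)))\cdot C_0^{-1}Mf(x)$, and after applying $\Psi$ to both sides and using Proposition~\ref{prop:inverse} together with the doubling property \eqref{Phi-1 doubl} (via Remark~\ref{rem:D2 n2}(iv), after replacing $\Phi,\Psi$ by equivalent $N$-functions as in Remark~\ref{rem:Ir}) one arrives at \eqref{Mr pointwise}, possibly after adjusting $C_1$. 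The consequence — boundedness from $\LP$ to $\wLPs$, and from $\LP$ to $\LPs$ when $\Phi\in\bntwo$ — then follows by composing \eqref{Mr pointwise} with Theorem~\ref{thm:M}: the weak estimate needs \eqref{C0weak} and the argument that $\Psi(\Mr f/C_1)\le\Phi(Mf/C_0)$ transfers a weak-type bound on $Mf$ in $\LP$ to one on $\Mr f$ in $\wLPs$, while the strong case uses \eqref{C0}.

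For part (ii), the plan is to insert suitable test functions into the assumed boundedness $\|\Mr f\|_{\wLPs}\le C\|f\|_{\LP}$. Fix $r>0$ and take $f=\chi_{B(0,r)}$. Then $\|f\|_{\LP}=1/\Phi^{-1}(1/|B(0,r)|)\sim 1/\Phi^{-1}(1/r^n)$ by the standard formula for the Orlicz norm of a characteristic function. On the other hand, for any $t$ with $0<t\le r$ and any $x\in B(0,r)$, choosing the ball $B(0,t)$ (or more carefully a ball of radius $t$ contained in $B(0,r)$ and containing $x$ when $|x|$ is small, and for general $x\in B(0,r)$ the ball $B(0,2r)\supset\{x\}\cup B(0,r)$) gives a lower bound $\Mr f(x)\gtrsim\rho(t)\mint_{B}\chi_{B(0,r)}\gtrsim\rho(t)\cdot(t/r)^n$ or, more efficiently, $\Mr f(x)\gtrsim\sup_{0<t\le r}\rho(t)\cdot\frac{|B(0,t)\cap B(0,r)|}{|B(0,t)|}$ — the cleanest choice is to test the sup defining $\Mr f(x)$ at the radius achieving $\sup_{0<t\le r}\rho(t)$ (up to $\epsilon$) on a ball where the average of $\chi_{B(0,r)}$ is bounded below, which works on, say, $B(0,r)$ itself at radius $r$ giving $\Mr f(x)\ge\rho(r)$, and then an enlargement argument upgrades this to $\sup_{0<t\le r}\rho(t)$. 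Thus $\Mr f\gtrsim(\sup_{0<t\le r}\rho(t))\chi_{B(0,r)}$ pointwise, whence $\|\Mr f\|_{\wLPs}\gtrsim(\sup_{0<t\le r}\rho(t))\|\chi_{B(0,r)}\|_{\wLPs}\sim(\sup_{0<t\le r}\rho(t))/\Psi^{-1}(1/r^n)$. Plugging both sides into the boundedness inequality yields exactly \eqref{Mr A}.

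The main obstacle is the optimization-in-$r$ step in part (i): because $\biPy$-functions need not be bijective and \eqref{Mr A} is only an inequality in one direction, the choice of $r$ making $\Phi^{-1}(1/r^n)\sim 1/\lambda$ is not an exact equality, and one must be careful that $1/r^n$ can be taken as (a constant times) $\Phi(1/\lambda)$ with the relevant inequalities preserved — this requires first passing to equivalent $N$-functions (justified exactly as in Remark~\ref{rem:Ir}), then using Proposition~\ref{prop:inverse} and the doubling \eqref{Phi-1 doubl} to absorb the constants, and finally checking that the constant $C_1$ can indeed be taken depending only on $C_0$, $A$, and the equivalence constants. The second delicate point, minor by comparison, is the enlargement argument in part (ii) needed to replace $\rho(r)$ by $\sup_{0<t\le r}\rho(t)$ in the lower bound for $\Mr f$; this follows by testing at a radius $t_0\le r$ with $\rho(t_0)$ within a factor $2$ of the supremum and noting that for $x\in B(0,r)$ the ball $B(x, t_0+2r)$ has radius comparable to $r$ (hence $\rho$ of it is handled after a further application of the monotone rearrangement of $\rho$ on $(0,r]$) — or, more simply, by observing that \eqref{Mr A} with $\sup$ replaced by $\rho(r)$ already suffices once one also ranges over $r'\le r$ in the test and takes the best one, so that the $\sup$ appears automatically on the left-hand side of the resulting family of inequalities.
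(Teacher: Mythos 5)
Your part (ii) is essentially the paper's own argument (test the boundedness on $f=\chi_{B(0,r)}$, observe that every $x\in B(0,r)$ lies in a ball of radius $t\le r$ contained in $B(0,r)$, so $\Mr\chi_{B(0,r)}\ge\sup_{0<t\le r}\rho(t)$ on $B(0,r)$, then use $\|\chi_G\|_{\LP}=\|\chi_G\|_{\wLP}=1/\Phi^{-1}(1/|G|)$); once you drop the unnecessary ``enlargement'' detour, it is correct. Part (i), however, has a genuine gap. Your only estimate for a ball $B(z,r)\ni x$ is $\rho(r)\mint_{B(z,r)}|f|\le\bigl(\sup_{0<t\le r}\rho(t)\bigr)Mf(x)\le A\,\frac{\Psi^{-1}(1/r^n)}{\Phi^{-1}(1/r^n)}\,Mf(x)$, after which you ``optimize in $r$''. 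But $r$ is not a free parameter here: unlike the Hedberg splitting used for $\Ir$, where the splitting radius may be chosen, the radius in $\Mr f(x)$ is quantified universally, so the bound must hold for \emph{every} $r$, and yours is not uniform in $r$. Indeed $u\mapsto\Psi^{-1}(u)/\Phi^{-1}(u)$ typically tends to infinity as $u\to0$: for $\Phi(t)=t^p$, $\Psi(t)=t^q$ with $q>p$ and $\rho(t)=t^{n(1/p-1/q)}$, condition \eqref{Mr A} holds with $A=1$, yet $\bigl(\sup_{0<t\le r}\rho(t)\bigr)Mf(x)=r^{n(1/p-1/q)}Mf(x)\to\infty$ as $r\to\infty$. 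The trivial bound $\mint_B|f|\le Mf(x)$ discards the decay of averages over large balls, so your scheme only works for balls up to the critical scale.

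What is missing is a second estimate for large balls, and this is exactly how the paper argues: normalizing $\|f\|_{\LP}=1$ and setting $\lambda=Mf(x)/C_0$, for balls with $1/r^n\le\Phi(\lambda)$ one bounds the average by the Orlicz norm via the generalized H\"older inequality \eqref{g Holder} and \eqref{chi norm}, namely $\mint_B|f|\le 2\Phi^{-1}(1/|B|)$, whence $\rho(r)\mint_B|f|\ls\rho(r)\Phi^{-1}(1/r^n)\le A\Psi^{-1}(1/r^n)\le A\Psi^{-1}(\Phi(\lambda))$ by \eqref{Mr A} and monotonicity of $\Psi^{-1}$; for balls with $1/r^n\ge\Phi(\lambda)$ one takes $t_0\ge r$ with $\Phi(\lambda)=1/t_0^n$ and uses the supremum in \eqref{Mr A} at scale $t_0$ (this is where the $\sup_{0<t\le t_0}\rho(t)$ is essential) together with $\mint_B|f|\le Mf(x)$ and \eqref{inverse ineq} to get $\rho(r)\mint_B|f|\le AC_0\Psi^{-1}(\Phi(\lambda))$. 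Combining the two cases gives $\Mr f(x)\le C_1\Psi^{-1}(\Phi(\lambda))$ and hence \eqref{Mr pointwise}; your remaining steps (reduction to $\Phi,\Psi\in\iPy$, and Theorem~\ref{thm:M} for the weak and strong conclusions) are fine. A minor further slip: your critical choice should read $\Phi^{-1}(1/r^n)\sim\lambda$ (equivalently $1/r^n\sim\Phi(\lambda)$), not $\Phi^{-1}(1/r^n)\sim1/\lambda$, otherwise the substitution does not produce $\Psi^{-1}(\Phi(C_0^{-1}Mf(x)))$.
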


\begin{rem}\label{rem:Mr}
Let $\rho:(0,\infty)\to(0,\infty)$,
and let $\Phi,\Psi\in\biPy$.
\begin{enumerate}
\item
Let $\rho_1(r)=\sup_{0<t\le r}\rho(t)$.
Then we conclude from the theorem above that 
$\Ir$ and $I_{\rho_1}$ have the same boundedness,
that is, we may assume that $\rho$ is increasing.

\item 
Since $\Phi^{-1}$ is pseudo-concave,
$u\mapsto\Phi^{-1}(u)/u$ is almost decreasing, and then
$r\mapsto\Phi^{-1}(1/r^n)r^n$ is almost increasing.
Therefore, from \eqref{Mr A} it follows that 
$r\mapsto\rho(r)/r^n$ is dominated by the almost decreasing function
$r\mapsto\frac{\Psi^{-1}(1/r^n)}{\Phi^{-1}(1/r^n)r^n}$.

\item
In \cite{Deringoz-Guliyev-Nakai-Sawano-Shi-preprint},
under the conditions that $\Phi,\Psi\in\iPy$, that $\rho$ is increasing and that
$r\mapsto\rho(r)/r^n$ is decreasing,
a necessary and sufficient condition for the boundedness of $\Mr$ has been given.
\end{enumerate}
\end{rem}

\begin{exmp}\label{exmp:Mr1}
If $\rho(r)=r^{\alpha}$, $\Phi(t)=t^p$ and $\Psi(t)=t^q$ with
$p,q\in[1,\infty)$ and $0\le\alpha\le n/p$, 
then
\begin{equation*}
 \rho(r){\Phi}^{-1}(1/r^n) 
 \sim r^{\alpha-n/p}
 \quad\text{and}\quad
 \Psi^{-1}(1/r^n)=r^{-n/q}. 
\end{equation*}
In this case,
\begin{equation*}
 \text{``\eqref{Mr A}"}
 \quad\Leftrightarrow\quad 
 r^{\alpha-n/p}\ls r^{-n/q}, \ r\in(0,\infty)
 \quad\Leftrightarrow\quad 
 \alpha-n/p=-n/q.
\end{equation*}
In this example, if $\alpha=0$, then $\Mr$ is the Hardy-Littlewood maximal operator $M$
and $\text{``\eqref{Mr A}"}\Leftrightarrow p=q$.
If $\alpha-n/p=0$, then $\Mr$ is the fractional maximal operator $\Ma$
and it is bounded from $L^p(\R^n)$ to $\Li(\R^n)$,
since we can take 
\begin{equation}\label{Psi Psi-1}
 \Psi(r)=
 \begin{cases}
 0 & \text{for $r\in[0,1]$}, \\
 \infty &\text{for $r\in(1,\infty]$},
 \end{cases}
 \quad\text{and}\quad
 \Psi^{-1}(r)=
 \begin{cases}
 1 & \text{for $r\in[0,\infty)$}, \\
 \infty &\text{for $r=\infty$}.
 \end{cases}
\end{equation}
\end{exmp}

\begin{exmp}\label{exmp:Mr2}
Let $\Phi$ be as in \eqref{Phi exp},
and let $\Psi$ be as in \eqref{Phi exp} with $q$ instead of $p$.
Assume that $\alpha\in[0,\infty)$ and $p,q\in(0,\infty)$.
Let
\begin{equation}\label{M rho}
 \rho(r)=
 \begin{cases}
 (\log(1/r))^{-\alpha} & \text{for small $r>0$}, \\
 (\log r)^{\alpha} & \text{for large $r>0$},
 \end{cases}
\end{equation}
instead of \eqref{rho log}.
Here, we note that,
if $0\le\alpha\le1$, then $\int_0^1\frac{\rho(t)}{t}\,dt=\infty$,
that is, $\Ir$ is not well defined, while $\Mr$ is well defined.
Actually, $\Mr$ is bounded from $\exp L^p(\R^n)$ to $\exp L^q(\R^n)$,
if $-1/p+\alpha=-1/q$ for any $\alpha\in[0,\infty)$,
see \eqref{Phi Psi} for the inverse functions of $\Phi$ and $\Psi$.
Moreover, 
if $-1/p+\alpha=0$, then $\Mr$ is bounded from $\exp L^p(\R^n)$ to $\Li(\R^n)$,
since we can take $\Psi$ as in \eqref{Psi Psi-1}.
\end{exmp}

\begin{exmp}\label{exmp:Mr3}
Assume that $\alpha,q\in[0,\infty)$ and $p\in(1,\infty)$.
Let $\rho$ be as in \eqref{M rho}.
Then
$\Mr$ is bounded from $L^p(\R^n)$ to $L^p(\log L)^{p_1}(\R^n)$,
if $p_1/p=\alpha$,
where $L^p(\log L)^{p_1}(\R^n)$ is the Orlicz space $\LP(\R^n)$ with
\begin{equation*}
 \Phi(r)=
 \begin{cases}
 r^{p}(\log(1/r))^{-p_1} & \text{for small $r>0$}, \\
 r^{p}(\log r)^{p_1} &\text{for large $r>0$}.
 \end{cases}
\end{equation*}
In this case we have
\begin{equation}\label{Phi-1}
 \Phi^{-1}(1/r^n) \sim
 \begin{cases}
 r^{-n/p}(\log(1/r))^{-p_1/p} & \text{for small $r>0$}, \\
 r^{-n/p}(\log r)^{p_1/p} & \text{for large $r>0$}.
 \end{cases}
\end{equation}
In this example, if we take $p=1$, then 
$\Mr$ is bounded from $L^1(\R^n)$ to $\wL^1(\log L)^{\alpha}(\R^n)$
which is weak type of $L^1(\log L)^{\alpha}(\R^n)$.
\end{exmp}

Finally, we state the result on the commutator $[b,\Ir]$.
Let
\begin{equation}\label{rho*}
 \rho^*(r)=\int_0^r\frac{\rho(t)}{t}\,dt.
\end{equation}

\begin{thm}\label{thm:comm Ir}
Let $\rho,\psi:(0,\infty)\to(0,\infty)$,
and let $\Phi,\Psi\in\biPy$.
Assume that $\rho$ satisfies \eqref{int rho}.
Let $b\in L^1_{\loc}(\R^n)$.
\begin{enumerate}
\item 
Let $\Phi,\Psi\in\bdtwo\cap\bntwo$.
Assume that $\psi$ be almost increasing
and that $r\mapsto\rho(r)/r^{n-\epsilon}$ 
is almost decreasing for some $\epsilon\in(0,n)$.
Assume also that 
there exists a positive constant $A$ and $\Theta\in\bntwo$ such that,
for all $r\in(0,\infty)$,
\begin{gather}\label{comm Ir A}
 \int_0^r\frac{\rho(t)}{t}\,dt\;{\Phi}^{-1}(1/r^n) 
  +\int_r^{\infty}\frac{\rho(t)\,\Phi^{-1}(1/t^n)}{t}\,dt
 \le
 A\Theta^{-1}(1/r^n),
 \\
 \label{comm Mr A}
 \psi(r)\Theta^{-1}(1/r^n)
 \le A
 \Psi^{-1}(1/r^n),
\end{gather}
and that
there exist a positive constant $C_{\rho}$ 
such that,
for all $r,s\in(0,\infty)$,
\begin{equation}\label{rho conti} 
 \left|\frac{\rho(r)}{r^n}-\frac{\rho(s)}{s^n}\right| 
  \le
  C_{\rho}\,|r-s|\frac{\rho^*(r)}{r^{n+1}},
 \quad \text{if $\frac12\le\frac rs\le2$}.
\end{equation}
If $b\in\cL_{1,\psi}(\R^n)$,
then
$[b,\Ir]$ is bounded from $\LP(\R^n)$ to $\LPs(\R^n)$
and there exists a positive constant $C$ such that,
for all $f\in\LP(\R^n)$,
\begin{equation}\label{comm Ir}
 \|[b,\Ir]f\|_{\LPs}\le C\|b\|_{\cL_{1,\psi}}\|f\|_{\LP}.
\end{equation}

\item
Conversely, 
assume that there exists a positive constant $A$ such that,
for all $r\in(0,\infty)$,
\begin{equation*}
 \Psi^{-1}(1/r^{n})
 \le 
 Ar^{\alpha}\psi(r)\Phi^{-1}(1/r^{n}).
\end{equation*}
If $[b,\Ia]$ is well defined and bounded from $\LP(\R^n)$ to $\LPs(\R^n)$,
then $b$ is in $\cL_{1,\psi}(\R^n)$ 
and there exists a positive constant $C$, independent of $b$, such that
\begin{equation}\label{comm Ir c}
 \|b\|_{\cL_{1,\psi}}\le C\|[b,\Ia]\|_{\LP\to\LPs},
\end{equation}
where $\|[b,\Ia]\|_{\LP\to\LPs}$ is the operator norm of $[b,\Ia]$ from $\LP(\R^n)$ to $\LPs(\R^n)$.
\end{enumerate}
\end{thm}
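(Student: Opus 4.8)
\textbf{Proof proposal for Theorem~\ref{thm:comm Ir}.}

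The plan is to handle the two parts separately, treating the sufficiency (i) by a pointwise sharp-maximal estimate combined with the Orlicz boundedness of $M$ and $M_\rho$-type operators, and the necessity (ii) by testing the commutator against characteristic functions of balls, following the classical Chanillo-type argument adapted to the generalized setting.

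For part (i), the first step is to establish a pointwise estimate of the form
\begin{equation*}
 \Ms([b,\Ir]f)(x)
 \ls \|b\|_{\cL_{1,\psi}}\Big( M_{\psi\rho^*}(Mf)(x) + M_{\rho}(M_{\psi}f)(x) + \cdots \Big),
\end{equation*}
where the implied operators on the right are generalized fractional maximal operators whose multiplier functions combine $\psi$, $\rho$, and $\rho^*$. This is where the hypotheses \eqref{rho conti} and the almost-decreasing property of $r\mapsto\rho(r)/r^{n-\epsilon}$ enter: writing $[b,\Ir]f = (b-b_B)\Ir f - \Ir((b-b_B)f)$ on a ball $B$, one splits the second term over $2B$ and $\R^n\setminus 2B$, uses the John–Nirenberg–type growth of $b-b_{2^kB}$ controlled by $\psi$ (since $b\in\cL_{1,\psi}$ and $\psi$ is almost increasing), and uses \eqref{rho conti} together with the $\rho^*$-mean oscillation of the kernel to sum the tail. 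The $\esssup$ over $s$ in \eqref{rho conti} mimics the smoothness of $|x-y|^{\alpha-n}$ in the classical case. Next, one invokes \eqref{comm Ir A} and \eqref{comm Mr A} to see that each generalized maximal operator appearing on the right is, by Theorem~\ref{thm:Mr}(i) applied with the appropriate multiplier and the pair $(\Phi,\Theta)$ or $(\Theta,\Psi)$, bounded from $\LP$ into $\LT$ and from $\LT$ into $\LPs$, since $\Theta\in\bntwo$. Then, with $\Phi,\Psi\in\bdtwo\cap\bntwo$, one applies the norm estimate by the sharp maximal operator from Section~\ref{sec:sharp} — i.e.\ $\|g\|_{\LPs}\ls\|\Ms g\|_{\LPs}$ for $g$ in a suitable class — to $g=[b,\Ir]f$, provided one first checks $[b,\Ir]f\in L^1_{\loc}$ and has the required a priori control; composing these bounds yields \eqref{comm Ir}. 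One should also verify at the outset, using \eqref{int rho} and the $\LP$-to-$\LPs$ boundedness of $\Ir$ from Theorem~\ref{thm:Ir} (whose hypothesis \eqref{comm Ir A} supplies via $\Theta$ and \eqref{comm Mr A}), that $\Ir(bf)$ and $b\Ir f$ make sense so the commutator is well defined.

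For part (ii), fix a ball $B=B(x_0,r)$ and choose the standard test function built from $\sgn(b-b_B)$; the identity
\begin{equation*}
 \mint_B |b(y)-b_B|\,dy
 \ls \frac{1}{r^\alpha |B|}\int_B \big|[b,\Ia](\chi_B\,\phi)(y)\big|\,dy
\end{equation*}
for an appropriate normalized $\phi$ follows because on $B\times B$ the kernel $|y-z|^{\alpha-n}$ is comparable to $r^{\alpha-n}$. Estimating the right-hand side by Hölder's inequality in the Orlicz pairing $\LPs$–$\LcPs$ and using $\|\chi_B\|_{\LP}\sim 1/\Phi^{-1}(1/r^n)$, $\|\chi_B\|_{\LcPs}\sim r^n\Psi^{-1}(1/r^n)$ together with the assumed inequality $\Psi^{-1}(1/r^n)\le Ar^\alpha\psi(r)\Phi^{-1}(1/r^n)$, one gets $\mint_B|b-b_B|\ls \psi(r)\|[b,\Ia]\|_{\LP\to\LPs}$, which is \eqref{comm Ir c} after taking the supremum over $B$.

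The main obstacle I expect is part (i): specifically, producing the pointwise sharp-maximal bound with the correct bookkeeping of the three multiplier functions $\psi$, $\rho$, $\rho^*$ across the dyadic annuli, and arranging the auxiliary Young function $\Theta$ so that \emph{both} chained boundedness steps ($\LP\to\LT$ for the first maximal operator, $\LT\to\LPs$ for the second) go through simultaneously under the single pair of conditions \eqref{comm Ir A}–\eqref{comm Mr A}. The smoothness hypothesis \eqref{rho conti} on $\rho(r)/r^n$ is the substitute for the mean value estimate on the Riesz kernel, and getting the tail sum $\sum_k$ to converge will hinge on combining it with the doubling of $\Phi^{-1}$ (Remark~\ref{rem:D2 n2}(iv)) and the growth bound on $\psi$; this summation is the technical heart of the argument.
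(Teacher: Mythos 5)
Your part (i) outline follows the paper's route (pointwise sharp-maximal estimate for $[b,\Ir]f$, chaining through the intermediate function $\Theta$, then a sharp-maximal norm inequality and a limiting/density argument), but the step you defer as "the technical heart" is the actual content, and the estimate you posit is not the one that comes out. Since $b\in\cL_{1,\psi}$ only controls $b-b_{2B}$ in $L^{\eta'}$-average (via Theorem~\ref{thm:J-N}), H\"older forces $\eta$-th powers on the companion factor; the usable bound is
\begin{equation*}
 \Ms([b,\Ir]f)(x)\ls\|b\|_{\cL_{1,\psi}}
 \Big(\big(M_{\psi^\eta}(|\Ir f|^{\eta})(x)\big)^{1/\eta}
 +\big(M_{(\rho^*\psi)^{\eta}}(|f|^{\eta})(x)\big)^{1/\eta}\Big),
\end{equation*}
with $|\Ir f|$, not $Mf$, inside the first maximal operator, because that is the term which is run through $\LP\to\LT$ (Theorem~\ref{thm:Ir} with \eqref{comm Ir A}) and then $M_{\psi^\eta}\colon L^{\Theta((\cdot)^{1/\eta})}\to L^{\Psi((\cdot)^{1/\eta})}$ (Theorem~\ref{thm:Mr} with \eqref{comm Mr A} raised to the power $\eta$); the exponent $\eta$ must be chosen by Lemma~\ref{lem:eta} so that $\Phi((\cdot)^{1/\eta})$, $\Psi((\cdot)^{1/\eta})$, $\Theta((\cdot)^{1/\eta})$ are all in $\bntwo$. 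The local piece of the splitting needs Corollary~\ref{cor:Ir} (boundedness of $\Ir$ from $L^v$ into an N-function Orlicz space), which is precisely where the almost-decreasingness of $r\mapsto\rho(r)/r^{n-\epsilon}$ enters. Finally, the "a priori control" you mention in passing is a real issue and is resolved in the paper by truncating $b$ to $b_k$, checking via Lemma~\ref{lem:Lic} that $b_k\Ir f$ and $\Ir(b_kf)$ lie in $\LPs(\R^n)$ for $f\in\Cic(\R^n)$ so that Proposition~\ref{prop:sharp LP} (which presupposes $\Md g\in\LPs$ and $\Psi\in\bdtwo$) applies, and then passing to the limit by Fatou and using density of $\Cic(\R^n)$ (here $\Phi\in\bdtwo$ is used). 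None of these devices appear in your sketch, so part (i) as written is a plausible plan rather than a proof.

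The genuine error is in part (ii). The inequality $\mint_B|b-b_B|\ls r^{-\alpha}\mint_B|[b,\Ia](\chi_B\phi)|$ does not follow from the comparability $|y-z|^{\alpha-n}\sim r^{\alpha-n}$ on $B\times B$: the integrand $(b(y)-b(z))\phi(z)$ with $\phi=\sgn(b-b_B)$ changes sign in $z$, so you may not replace the kernel by a constant inside the integral; cancellation can make $|[b,\Ia](\chi_B\phi)(y)|$ much smaller than $r^{\alpha-n}\int_B|b(y)-b(z)|\,dz$. Overcoming exactly this obstruction is why the paper uses Janson's method: take two separated balls $B=B(x_0,r)$ and $B'=B(x_0-rz_1,r)$, expand $|x-y|^{n-\alpha}=\delta^{-n+\alpha}r^{n-\alpha}\sum_j a_je^{iv_j\cdot\delta(x-y)/r}$ as an absolutely convergent Fourier series on the region $|(x-y)/r-z_1|<2$, and thereby write $\int_B|b-b_{B'}|$ as $Cr^{-\alpha}\sum_j a_j\int([b,\Ia]g_j)h_j$ with $g_j$, $h_j$ dominated by $\chi_{B'}$, $\chi_B$; only after this linearization do the Orlicz H\"older inequality, $\|\chi_{B'}\|_{\LP}\sim1/\Phi^{-1}(r^{-n})$, $\|\chi_B\|_{\LcPs}\le|B|\Psi^{-1}(1/|B|)$ and the hypothesis $\Psi^{-1}(1/r^{n})\le Ar^{\alpha}\psi(r)\Phi^{-1}(1/r^{n})$ give \eqref{comm Ir c} (your final computation is then correct). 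Without this expansion, or some other cancellation-free reduction, the first display of your part (ii) is unproved and the necessity argument fails.
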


\begin{exmp}\label{exmp:comm0}
Let $\alpha\in(0,n)$, $\beta\in[0,1]$ and $p,q\in(1,\infty)$,
and, let
\begin{equation*}
 \rho(r)=r^{\alpha}, \ \psi(r)=r^{\beta}, \ \Phi(r)=r^p, \ \Psi(r)=r^q.
\end{equation*}
Assume that $-n/p+\alpha+\beta=-n/q$.
Take $\Theta(r)=r^{\tq}$ with $-n/\tq=-n/p+\alpha$.
Then \eqref{comm Ir A}, \eqref{comm Mr A} and \eqref{rho conti} hold,
that is, $[b,\Ia]$ is bounded from $L^p(\R^n)$ to $L^q(\R^n)$, 
where $b\in\Lip_{\beta}(\R^n)$ if $\beta\in(0,1]$, 
and $b\in\BMO(\R^n)$ if $\beta=0$ which is Chanillo's result in \cite{Chanillo1982}.
\end{exmp}

\begin{exmp}\label{exmp:comm}
Let $\alpha\in(0,n)$ and $\alpha_1\in(-\infty,\infty)$.
Let $\beta\in(0,n)$ and $\beta_1\in(-\infty,\infty)$, or,
let $\beta=0$ and $\beta_1\in[0,\infty)$.
Let
\begin{equation*}
 \rho(r)=
 \begin{cases}
 r^{\alpha}(\log(1/r))^{-\alpha_1}, \\
 r^{\alpha}, \\
 r^{\alpha}(\log r)^{\alpha_1},
 \end{cases}
 \psi(r)=
 \begin{cases}
 r^{\beta}(\log(1/r))^{-\beta_1} & \text{for $r\in(0,1/e)$}, \\
 r^{\beta} & \text{for $r\in[1/e,e]$}, \\
 r^{\beta}(\log r)^{\beta_1} & \text{for $r\in(e,\infty)$}.
 \end{cases}
\end{equation*}
Then $\rho^*\sim\rho$ and $\rho'(t)\sim\rho(t)/t$.
In this case $\rho$ satisfies \eqref{rho conti},
since $\rho$ is Lipschitz continuous on $[1/(2e),2e]$,
and, for $r,s\in (0,1/e]\cup[e,\infty)$, there exists $\theta\in(0,1)$ such that
\begin{equation*}
 \left|\frac{\rho(r)}{r^n}-\frac{\rho(s)}{s^n}\right| 
 =
 |r-s|\left|\left.\frac{d}{dt}\left(\frac{\rho(t)}{t^n}\right)\right|_{t=(1-\theta)r+\theta s}\right|
 \ls
 |r-s|\frac{\rho(r)}{r^{n+1}},
 \quad \text{if $\frac12\le\frac rs\le2$}.
\end{equation*}
Let $p,q\in(1,\infty)$ and $p_1,q_1\in(-\infty,\infty)$, and let
\begin{equation*}
 \Phi(r)=
 \begin{cases}
 r^{p}(\log(1/r))^{-p_1}, \\
 r^{p}(\log r)^{p_1},
 \end{cases}
 \Psi(r)=
 \begin{cases}
 r^{q}(\log(1/r))^{-q_1} & \text{for small $r>0$}, \\
 r^{q}(\log r)^{q_1} &\text{for large $r>0$}.
 \end{cases}
\end{equation*}
For the inverse functions of $\Phi$ and $\Psi$, see \eqref{Phi-1}.
If 
\begin{equation*}
 -n/p+\alpha+\beta=-n/\tp+\beta=-n/q,
 \quad
 p_1/p+\alpha_1+\beta_1=\tp_1/\tp+\beta_1=q_1/q,
\end{equation*}
and
\begin{equation*}
 \Theta(r)=
 \begin{cases}
 r^{\tp}(\log(1/r))^{-\tp_1} & \text{for small $r>0$}, \\
 r^{\tp}(\log r)^{\tp_1} &\text{for large $r>0$},
 \end{cases}
\end{equation*}
then
\begin{equation*}
  \int_0^r\frac{\rho(t)}{t}\,dt\;{\Phi}^{-1}(1/r^n) \sim
  \int_r^{\infty}\frac{\rho(t)\,\Phi^{-1}(1/t^n)}{t}\,dt \sim
  \Theta^{-1}(r^{-n}),
\end{equation*}
and
\begin{equation*}
\psi(r) \Theta^{-1} (r^{-n}) \sim \Psi^{-1} (r^{-n}) \sim
 \begin{cases}
 r^{-n/p+\alpha+\beta}(\log(1/r))^{-(p_1/p+\alpha_1+\beta_1)} & \text{for small $r>0$}, \\
 r^{-n/p+\alpha+\beta}(\log r)^{p_1/p+\alpha_1+\beta_1} & \text{for large $r>0$}.
 \end{cases}
\end{equation*}
In this case $[b,\Ir]$ is bounded from $L^p(\log L)^{p_1}(\R^n)$ to $L^q(\log L)^{q_1}(\R^n)$.
\end{exmp}

\section{Lemmas}\label{sec:lemmas}

In this section we prepare some lemmas to prove our main results.

For a Young function $\Phi$, 
its complementary function is defined by
\begin{equation*}\label{complementary}
\cPhi(t)= 
\begin{cases}
   \sup\{tu-\Phi(u):u\in[0,\infty)\}, & t\in[0,\infty), \\
   \infty, & t=\infty.
 \end{cases}
\end{equation*}
Then $\cPhi$ is also a Young function and Young's inequality 
\begin{equation*}\label{Young's_ineq}
 tu\le\Phi(t)+\cPhi(u),
 \quad
 t,u\in[0,\infty)
\end{equation*}
holds.
It is also known that
\begin{equation}\label{PhicPhi r}
t\le\Phi^{-1}(t) \cPhi^{-1}(t)\le2t, \quad t\ge 0.
\end{equation}
From Young's inequality we have a generalized H\"older's inequality:
\begin{equation}\label{g Holder}
 \int_{\mathbb R^n}  |f(x)g(x)|\,dx \le 2\|f\|_{\LP} \|g\|_{\LcP}
\end{equation}
(see \cite[Theorem~6]{Weiss1956} and \cite[Theorem~2.3]{ONeil1965}).

\begin{lem}\label{lem:chi}
Let $\Phi\in\iPy$.
For a measurable set $G\subset\R^n$ with finite measure,
\begin{equation*}
 \|\chi_G\|_{\LP}
 =
 \|\chi_G\|_{\wLP}
 =
 \frac1{\Phi^{-1}(1/|G|)}.
\end{equation*}
\end{lem}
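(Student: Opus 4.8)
The plan is to compute the Luxemburg norm of a characteristic function directly from the definition and then show the weak norm agrees with it. First I would handle $\|\chi_G\|_{\LP}$: for $\lambda>0$ we have
\begin{equation*}
 \int_{\R^n}\Phi\!\left(\frac{\chi_G(x)}{\lambda}\right)dx
 =\Phi(1/\lambda)\,|G|,
\end{equation*}
so $\|\chi_G\|_{\LP}=\inf\{\lambda>0:\Phi(1/\lambda)\le 1/|G|\}$. Writing $u=1/\lambda$, this is $1/\sup\{u>0:\Phi(u)\le 1/|G|\}$. The point is then to identify $\sup\{u:\Phi(u)\le 1/|G|\}$ with $\Phi^{-1}(1/|G|)=\inf\{t:\Phi(t)>1/|G|\}$. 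Since $\Phi\in\iPy$ is a Young function, it is continuous on $[0,b(\Phi))$ and the two quantities coincide: if $\Phi(u)\le 1/|G|$ then $u\le\Phi^{-1}(1/|G|)$ by \eqref{inv 1}, while for $u>\Phi^{-1}(1/|G|)$ one gets $\Phi(u)>1/|G|$ by \eqref{inv 2}. Hence $\|\chi_G\|_{\LP}=1/\Phi^{-1}(1/|G|)$. (One should note that $0<|G|<\infty$ guarantees $\Phi^{-1}(1/|G|)\in(0,\infty)$, so the reciprocal makes sense.)

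Next I would treat the weak norm. Using the reformulation recorded in the excerpt,
\begin{equation*}
 \|\chi_G\|_{\wLP}
 =\inf\left\{\lambda>0:\sup_{t\in(0,\infty)}\Phi(t)\,m\!\left(\frac{\chi_G}{\lambda},t\right)\le 1\right\},
\end{equation*}
and for fixed $\lambda$ the level set $\{x:\chi_G(x)/\lambda>t\}$ equals $G$ when $t<1/\lambda$ and is empty when $t\ge 1/\lambda$. Therefore
\begin{equation*}
 \sup_{t\in(0,\infty)}\Phi(t)\,m\!\left(\frac{\chi_G}{\lambda},t\right)
 =|G|\sup_{0<t<1/\lambda}\Phi(t)
 =|G|\,\Phi(1/\lambda-0),
\end{equation*}
and by left continuity of $\Phi$ (property \eqref{left cont}) the last quantity is $|G|\,\Phi(1/\lambda)$, exactly the same expression as in the strong case. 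So the two infima coincide and $\|\chi_G\|_{\wLP}=1/\Phi^{-1}(1/|G|)$ as well.

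The argument is essentially routine; the only delicate points are the careful identification of $\sup\{u:\Phi(u)\le v\}$ with $\Phi^{-1}(v)=\inf\{t:\Phi(t)>v\}$ via Proposition~\ref{prop:inverse} and \eqref{inv 1}--\eqref{inv 3}, and making sure the $\sup_{0<t<1/\lambda}\Phi(t)$ in the weak computation equals $\Phi(1/\lambda)$ rather than something strictly smaller — this is where left continuity of $\Phi$ (which holds for Young functions since they are continuous on $[0,b(\Phi))$) is used. I expect this left-continuity bookkeeping to be the main, if minor, obstacle; everything else is a direct unwinding of definitions.
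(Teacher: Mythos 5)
Your argument is correct: both computations reduce to the condition $\Phi(1/\lambda)\,|G|\le 1$, and the identification $\sup\{u:\Phi(u)\le v\}=\Phi^{-1}(v)$ together with the left-continuity step $\sup_{0<t<1/\lambda}\Phi(t)=\Phi(1/\lambda)$ are exactly the right points to check; note only that the implication ``$u>\Phi^{-1}(v)\Rightarrow\Phi(u)>v$'' is the contrapositive of \eqref{inv 1} rather than an instance of \eqref{inv 2}. The paper states Lemma~\ref{lem:chi} without proof as a known fact, and your direct unwinding of Definition~\ref{defn:LP} via Proposition~\ref{prop:inverse} is the standard argument that fills this in.
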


From \eqref{PhicPhi r} it follows that, for the characteristic function $\chi_B$ of the ball $B$,
\begin{equation}\label{chi norm}
 \|\chi_B\|_{\LcP}=\frac1{\cPhi^{-1}(1/|B|)}\le|B|\Phi^{-1}(1/|B|).
\end{equation}

\begin{lem}[\cite{Arai-Nakai2017REMC}]\label{lem:rho dec}
Let $k>0$ and $\rho:(0,\infty)\to(0,\infty)$.
Assume that $\rho$ satisfies \eqref{int rho}.
Let $\rho^*$ be as in \eqref{rho*}.
If $r\mapsto\rho(r)/r^k$ is almost decreasing,
then
$r\mapsto\rho^*(r)/r^k$ is also almost decreasing.
\end{lem}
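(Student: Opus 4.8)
The plan is to reduce everything to the defining integral \eqref{rho*} and to use the almost-decreasing hypothesis on $\rho(r)/r^k$ directly, splitting the integral defining $\rho^*$ at the midpoint $r/2$. Write $C$ for the constant witnessing that $r\mapsto\rho(r)/r^k$ is almost decreasing, i.e.\ $\rho(t)/t^k\le C\,\rho(s)/s^k$ whenever $s<t$. For the upper bound one observes that, for $0<s<t$, using the monotonicity hypothesis on the portion of the integral beyond $s$,
\begin{equation*}
 \rho^*(t)
 =\int_0^t\frac{\rho(u)}{u}\,du
 =\rho^*(s)+\int_s^t u^{k-1}\,\frac{\rho(u)}{u^k}\,du
 \le\rho^*(s)+C\,\frac{\rho(s)}{s^k}\int_s^t u^{k-1}\,du
 =\rho^*(s)+\frac{C}{k}\,\frac{\rho(s)}{s^k}\,(t^k-s^k).
\end{equation*}

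The key step is then to control $\rho(s)/s^k$ by $\rho^*(s)/s^k$ up to a constant, which is exactly the content of \eqref{rho/r^k} applied (without the final $\sim$) on the interval $[s/2,s]$: since $\rho(u)/u^k\gs \rho(s)/s^k$ for $u\in[s/2,s]$ by the almost-decreasing property, integrating gives
\begin{equation*}
 \rho^*(s)\ge\int_{s/2}^s\frac{\rho(u)}{u}\,du
 =\int_{s/2}^s u^{k-1}\,\frac{\rho(u)}{u^k}\,du
 \gs\frac{\rho(s)}{s^k}\int_{s/2}^s u^{k-1}\,du
 \sim\rho(s),
\end{equation*}
hence $\rho(s)/s^k\ls\rho^*(s)/s^k$. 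Substituting this into the previous display and using $t^k-s^k\le t^k$ yields $\rho^*(t)\ls\rho^*(s)(1+(t/s)^k)\ls\rho^*(s)(t/s)^k$ for $s<t$ (the last step because $1\le(t/s)^k$), which is precisely the statement that $r\mapsto\rho^*(r)/r^k$ is almost decreasing.

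The only genuinely delicate point is making sure the constants are tracked correctly and that the hypothesis \eqref{int rho} is invoked where needed (it guarantees $\rho^*(r)<\infty$ for every $r>0$, so that all the quantities above are finite and the manipulations are legitimate); everything else is a routine splitting of the integral. I would also note that this lemma is already attributed in the excerpt to \cite{Arai-Nakai2017REMC}, so in the paper itself it would likely be cited rather than reproved; the argument above is essentially the one behind \eqref{rho/r^k} in Remark~\ref{rem:cor:Ir}, just iterated.
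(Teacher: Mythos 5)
Your proof is correct, and since the paper itself does not prove this lemma but simply cites \cite{Arai-Nakai2017REMC}, your argument (splitting $\rho^*$ at $s$, bounding the tail by the almost-decreasing hypothesis, and absorbing $\rho(s)$ into $\rho^*(s)$ via the integral over $[s/2,s]$ exactly as in \eqref{rho/r^k}) is the standard one and fills that gap correctly. One small remark: \eqref{int rho} only controls the integral near $0$; finiteness of $\rho^*(r)$ for large $r$ also uses the almost-decreasing hypothesis, which gives $\rho(t)\le C\rho(s)\,(t/s)^k$ for $t\ge s$ and hence local integrability of $\rho(t)/t$ away from the origin.
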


\begin{rem}\label{rem:rho dec}
Since $\rho^*$ is increasing with respect to $r$, 
if $r\mapsto\rho(r)/r^k$ is almost decreasing for some $k>0$,
then we see that $\rho^*$ satisfies the doubling condition, 
that is, there exists a positive constant $C$ such that, for all $r\in(0,\infty)$,
\begin{equation*}
 \rho^*(r)\le\rho^*(2r)\le C\rho^*(r).
\end{equation*}
\end{rem}

\begin{lem}\label{lem:diff D2}
If $\Phi\in\dtwo$,
then its derivative $\Phi'$ satisfies
\begin{equation*}
 \Phi'(2t)\le C_{\Phi}\Phi'(t), 
 \quad\text{a.e.}\, t\in[0,\infty),
\end{equation*}
where the constant $C_{\Phi}$ is independent of $t$.
\end{lem}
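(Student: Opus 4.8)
The plan is to exploit the integral representation \eqref{derivative} of a Young function together with the monotonicity of $\Phi'$, which holds because $\Phi$ is convex on $[0,b(\Phi))$ and, being in $\dtwo\subset\cY$, has $b(\Phi)=\infty$. First I would recall that convexity of $\Phi$ forces $\Phi'$ to be nondecreasing a.e.\ on $[0,\infty)$. The idea is then to bound $\Phi'(2t)$ from below and above by averages of $\Phi'$ over suitable intervals: on the one hand, since $\Phi'$ is nondecreasing,
\begin{equation*}
 \Phi(4t)-\Phi(2t)=\int_{2t}^{4t}\Phi'(s)\,ds\ge 2t\,\Phi'(2t),
\end{equation*}
so that $\Phi'(2t)\le \dfrac{\Phi(4t)}{2t}$; on the other hand,
\begin{equation*}
 \Phi(t)=\int_0^t\Phi'(s)\,ds\le t\,\Phi'(t),
\end{equation*}
so that $\Phi'(t)\ge \dfrac{\Phi(t)}{t}$.

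Combining these two inequalities yields
\begin{equation*}
 \frac{\Phi'(2t)}{\Phi'(t)}\le\frac{\Phi(4t)/(2t)}{\Phi(t)/t}=\frac{\Phi(4t)}{2\,\Phi(t)},
\end{equation*}
and now the $\Delta_2$-condition \eqref{Delta2}, applied twice, gives $\Phi(4t)\le C\,\Phi(2t)\le C^2\Phi(t)$, whence $\Phi'(2t)\le (C^2/2)\,\Phi'(t)$ for a.e.\ $t\in(0,\infty)$. This proves the claim with $C_\Phi=C^2/2$, where $C$ is the constant in \eqref{Delta2}. The case $t=0$ is vacuous since the estimate is only asserted a.e.

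The main point to be careful about — and the only genuine obstacle — is justifying that $\Phi'$ is nondecreasing a.e.\ and that the fundamental-theorem-of-calculus identity \eqref{derivative} holds on the relevant interval; this is exactly the absolute continuity of $\Phi$ on closed subintervals of $[0,b(\Phi))$, which was recorded right after Definition~\ref{defn:Young}, together with the standard fact that the derivative of a convex function is monotone. Once that is in hand, the rest is the elementary two-sided estimate above, so no delicate analysis is needed. One should only note that all quantities appearing ($\Phi(t)$, $\Phi(4t)$) are finite because $b(\Phi)=\infty$ for $\Phi\in\dtwo$, so there is no division by $\infty$ or other degeneracy.
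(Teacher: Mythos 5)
Your proof is correct and follows essentially the same route as the paper's: both bound $\Phi'(2t)$ by an average of the derivative over an interval to the right of $2t$ (you use $[2t,4t]$, the paper uses $[2t,3t]$ via the right derivative $\Phi_+'$), bound $\Phi'(t)$ from below by $\Phi(t)/t$ using the representation \eqref{derivative}, and conclude with the $\Delta_2$-condition. The only cosmetic difference is that the paper phrases the monotonicity through the everywhere-defined right derivative $\Phi_+'=\Phi'$ a.e., whereas you work directly with $\Phi'$ a.e., which is equally fine for an a.e.\ statement.
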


\begin{proof}
From the convexity of $\Phi$ and $\Phi(0)=0$
it follows that its right derivative 
$\Phi_+'(t)$ exists for all $t\in[0,\infty)$ and it is increasing.
By \eqref{derivative} we have
\begin{equation*}
 \Phi(t)=\int_0^t\Phi'(s)\,ds=\int_0^t\Phi_+'(s)\,ds,
\end{equation*}
since $\Phi'=\Phi_+'$ a.e.
Then, for all $t\in(0,\infty)$,
\begin{equation*}
 \Phi_+'(2t)
 \le
 \frac{1}{t}\int_{2t}^{3t}\Phi_+'(s)\,ds
 \le
 \frac{1}{t}\Phi(3t)
 \le
 \frac{C_{\Phi}}{t}\Phi(t)
 \le
 C_{\Phi}\Phi_+'(t).
\end{equation*}
This shows the conclusion.
\end{proof}

\begin{lem}\label{lem:eta}
If $\Phi\in\bntwo$, 
then $\Phi((\cdot)^{\theta})\in\bntwo$ for some $\theta\in(0,1)$.
\end{lem}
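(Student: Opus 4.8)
The plan is to unwind the definitions of $\bntwo$ and $\bdtwo$ and reduce to an explicit inequality for the derivative. First I would recall what $\Phi\in\bntwo$ gives us: there exists $k>1$ with $\Phi(t)\le\frac{1}{2k}\Phi(kt)$ for all $t>0$. Iterating this, $\Phi$ grows at least like a power: $\Phi(k^jt)\ge(2k)^j\Phi(t)$, so for $s\ge t$ one gets a lower bound $\Phi(s)/\Phi(t)\gtrsim(s/t)^{1+\delta}$ for some $\delta>0$ (taking $\delta=\log 2/\log k$). Equivalently, the function $t\mapsto\Phi(t)/t^{1+\delta}$ is almost increasing. By Remark~\ref{rem:D2 n2}(i) and (ii) we may, after replacing $\Phi$ by an equivalent Young function, assume $\Phi\in\ntwo$, i.e.\ $\Phi\in\cY$, so $\Phi$ is a genuine Young function with $a(\Phi)=0$, $b(\Phi)=\infty$; note the desired conclusion $\Phi((\cdot)^\theta)\in\bntwo$ is stable under $\approx$ since $(C^{-1}t)^\theta = C^{-\theta}t^\theta$, so this reduction is harmless.

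Next I would set $\Psi(t)=\Phi(t^\theta)$ for a $\theta\in(0,1)$ to be chosen, and verify $\Psi\in\bntwo$ directly from Definition~\ref{defn:D2 n2}(ii): I must produce $\ell>1$ with $\Psi(t)\le\frac{1}{2\ell}\Psi(\ell t)$, i.e.\ $\Phi(t^\theta)\le\frac{1}{2\ell}\Phi(\ell^\theta t^\theta)$ for all $t>0$. Writing $u=t^\theta$ this is exactly $\Phi(u)\le\frac{1}{2\ell}\Phi(\ell^\theta u)$ for all $u>0$. Now use the power lower bound above: $\Phi(\ell^\theta u)\ge c\,\ell^{\theta(1+\delta)}\Phi(u)$ for a fixed constant $c>0$ depending only on $k$ (coming from the almost-increasing constant of $t\mapsto\Phi(t)/t^{1+\delta}$). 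So it suffices to choose $\theta\in(0,1)$ and then $\ell$ large enough that $c\,\ell^{\theta(1+\delta)}\ge 2\ell$, i.e.\ $\ell^{\theta(1+\delta)-1}\ge 2/c$. This is solvable with $\ell$ large precisely when $\theta(1+\delta)-1>0$, i.e.\ $\theta>1/(1+\delta)$; since $1/(1+\delta)<1$, we can pick such a $\theta$ strictly less than $1$, and then pick $\ell$ accordingly. That gives $\Psi\in\bntwo$.

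The one technical point to handle carefully is the passage from the single inequality $\Phi(t)\le\frac{1}{2k}\Phi(kt)$ to the clean statement that $t\mapsto\Phi(t)/t^{1+\delta}$ is almost increasing for $\delta=\log 2/\log k$: one shows $\Phi(k^jt)\ge(2k)^j\Phi(t)$ by induction on $j\in\N$, and then for general $s\ge t$ writes $s\in[k^jt,k^{j+1}t)$ for suitable $j$, uses monotonicity of $\Phi$ on the left and the power bound, absorbing one factor of $2k$ into the constant. This is the main — though routine — obstacle; once it is in hand, the rest is the elementary choice of exponents above. Alternatively, if one prefers to work with derivatives, one could invoke a known characterization of $\bntwo$ via $\int^t \Phi'(s)/s\,ds\lesssim \Phi'(t)$ type estimates, but the power-growth argument is self-contained and avoids differentiability issues, so that is the route I would take.
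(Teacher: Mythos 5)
Your proof is correct. The mechanism is the same one the paper uses --- iterating the $\nabla_2$ inequality $\Phi(t)\le\frac1{2k}\Phi(kt)$ to gain superlinear power growth and then exploiting that a power $\theta$ close to $1$ does not destroy it --- but the bookkeeping differs. You first upgrade the single inequality to the global statement that $t\mapsto\Phi(t)/t^{1+\delta}$ is almost increasing with $\delta=\log2/\log k$, then fix any $\theta\in(1/(1+\delta),1)$ and take the $\nabla_2$ constant $\ell$ for $\Phi((\cdot)^\theta)$ large enough that $\ell^{\theta(1+\delta)-1}\ge 4k$. The paper is more economical: it applies the $\nabla_2$ inequality exactly twice, getting $\Phi(t^{\theta})\le\frac1{(2k)^2}\Phi(k^2t^{\theta})$, and then chooses $\theta$ so close to $1$ that $k^{2(1/\theta-1)}\le2$, i.e. $k^2t^\theta\le\bigl((2k^2)t\bigr)^\theta$, which gives the required inequality with the explicit constant $\ell=2k^2$ --- no almost-increasing lemma and no large-$\ell$ limit are needed. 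Two small remarks on your write-up: the preliminary reduction to $\Phi\in\ntwo$ via Remark~2.1 is superfluous, since your argument uses only the monotonicity of $\Phi$ and the $\nabla_2$ inequality, both already available for $\Phi\in\bntwo$; and that reduction tacitly relies on the (true, but unproved in your sketch) facts that $\bntwo$ and the conclusion $\Phi((\cdot)^\theta)\in\bntwo$ are stable under $\approx$, so omitting the reduction also removes that loose end.
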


\begin{proof}
If $\Phi\in\bntwo$, 
then there exists a constant $k>1$ such that 
\begin{equation*}
 \Phi(t)\le\frac1{2k}\Phi(kt).
\end{equation*}
Take $\theta\in(0,1)$ such that $k^{2(1/\theta-1)}\le2$.
Then $k^2\le(2k^2)^{\theta}$ and
\begin{equation*}
 \Phi(t^{\theta})
 \le
 \frac1{2k}\Phi(kt^{\theta}) 
 \le
 \frac1{(2k)^2}\Phi(k^2t^{\theta})
 \le
 \frac1{2(2k^2)}\Phi((2k^2t)^{\theta}).
\end{equation*}
That is, $\Phi((\cdot)^{\theta})\in\bntwo$.
\end{proof}

\begin{rem}\label{rem:eta}
There exists $\Phi\in\ntwo$ such that
$\Phi((\cdot)^{\theta})\notin\iPy$ for any $\theta\in(0,1)$.
Actually,
let
\begin{equation*}
 \Phi(r)=\max(r^2,3r-2)=
 \begin{cases}
  r^2, & 0\le r\le1, \\
  3r-2, & 1<r<2, \\
  r^2, & 2\le r.
 \end{cases}
\end{equation*}
Then $\Phi$ is convex and satisfies \eqref{nabla2} with $k=8$.
However, $3r^{\theta}-2$ is not convex for any $\theta\in(0,1)$.
\end{rem}

\section{Proof of Theorem~\ref{thm:Mr}}\label{sec:proofs bdd}

In this section we prove Theorem~\ref{thm:Mr}.

\begin{proof}[Proof of Theorem~\ref{thm:Mr} {\rm (i)}]
We may assume that $\Phi,\Psi\in\iPy$ by \eqref{approx equiv}.
Let $f\in\LP(\R^n)$.
We may also assume that $\|f\|_{\LP}=1$ and $Mf(x)>0$ for all $x\in\R^n$.
For any $x\in\R^n$ and any ball $B=B(z,r)\ni x$,
if
\begin{equation*}
 \Phi\left(\frac{Mf(x)}{C_0}\right)\ge\frac{1}{r^n},
\end{equation*}
then,
by \eqref{g Holder}, $\|f\|_{\LP}=1$, \eqref{chi norm},
the doubling condition of $\Phi^{-1}$ and \eqref{Mr A},
we have
\begin{align*}
 \rho(r)\mint_B|f|
 &\le
 2\frac{\rho(r)}{|B|}\|\chi_B\|_{\LcP} 
 \le
 2\frac{\rho(r)}{|B|}|B|\Phi^{-1}\left(\frac{1}{|B|}\right) \\
 &\ls
 \rho(r)\Phi^{-1}\left(\frac{1}{r^n}\right) 
 \le
 A\Psi^{-1}\left(\frac{1}{r^n}\right)
 \le
 A\Psi^{-1}\left(\Phi\left(\frac{Mf(x)}{C_0}\right)\right).
\end{align*}
Conversely, if
\begin{equation*}
 \Phi\left(\frac{Mf(x)}{C_0}\right)\le\frac{1}{r^n},
\end{equation*}
then, choosing $t_0\ge r$ such that
\begin{equation*}
 \Phi\left(\frac{Mf(x)}{C_0}\right)=\frac{1}{{t_0}^n},
\end{equation*}
and using \eqref{Mr A} and \eqref{inverse ineq}, we have
\begin{equation*}
 \rho(r)
 \le
 \sup_{0<t\le t_0}\rho(t)
 \le A
 \frac{\Psi^{-1}\left(\Phi\left(\frac{Mf(x)}{C_0}\right)\right)}
      {\Phi^{-1}\left(\Phi\left(\frac{Mf(x)}{C_0}\right)\right)}
 \le A
 \frac{\Psi^{-1}\left(\Phi\left(\frac{Mf(x)}{C_0}\right)\right)}
      {\frac{Mf(x)}{C_0}},
\end{equation*}
which implies 
\begin{equation*}
 \rho(r)\mint_B|f|
 \le AC_0
 \frac{\Psi^{-1}\left(\Phi\left(\frac{Mf(x)}{C_0}\right)\right)}
      {Mf(x)}
  \mint_B|f|
 \le AC_0
 \Psi^{-1}\left(\Phi\left(\frac{Mf(x)}{C_0}\right)\right).
\end{equation*}
Hence, we have
\begin{equation*}
 \Mr f(x)\le C_1\Psi^{-1}\left(\Phi\left(\frac{Mf(x)}{C_0}\right)\right),
\end{equation*}
which shows \eqref{Mr pointwise} by \eqref{inverse ineq}.
\end{proof}

To prove Theorem~\ref{thm:Mr} (ii)
we need the following lemma.
\begin{lem}\label{lem:Mr chi}
Let $\rho:(0,\infty)\to(0,\infty)$.
Then, 
for all $x\in\R^n$ and $r\in(0,\infty)$,
\begin{equation}\label{sup rho < Mr}
 \left(\sup_{0<t\le r}\rho(t)\right)\chi_{B(0,r)}(x)
 \le (\Mr\chi_{B(0,r)})(x).
\end{equation}
\end{lem}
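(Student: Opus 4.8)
The plan is to unwind the definition of $\Mr$ and reduce the inequality to a purely geometric statement about balls. Since both sides of \eqref{sup rho < Mr} vanish when $x\notin B(0,r)$, I would fix $x\in B(0,r)$, so that $|x|<r$; it then suffices to prove $\rho(t)\le(\Mr\chi_{B(0,r)})(x)$ for each individual $t\in(0,r]$, because taking the supremum over $t$ afterwards gives the claim. For a fixed such $t$, the strategy is to produce one admissible ball in the supremum defining $\Mr\chi_{B(0,r)}(x)$, namely a ball $B(z,t)$ of that exact radius with $x\in B(z,t)$ and $B(z,t)\subset B(0,r)$: for such a ball $\chi_{B(0,r)}\equiv1$ on $B(z,t)$, hence $\rho(t)\mint_{B(z,t)}\chi_{B(0,r)}=\rho(t)$, and the desired lower bound follows directly from the definition of $\Mr$.

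The heart of the argument is therefore the geometric claim that, whenever $|x|<r$ and $0<t\le r$, there exists $z\in\R^n$ with $|z-x|<t$ and $|z|\le r-t$. The second condition forces $B(z,t)\subset B(0,r)$, since $|y|\le|y-z|+|z|<t+(r-t)=r$ for every $y\in B(z,t)$, while the first places $x$ inside $B(z,t)$. I would justify the claim by noting that the set of admissible centres is exactly $\overline{B(0,r-t)}\cap B(x,t)$, and that this intersection is nonempty because the distance from $x$ to the closed ball $\overline{B(0,r-t)}$ equals $\max\{0,\,|x|-(r-t)\}$, which is strictly less than $t$ precisely because $|x|<r$. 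Equivalently, one can simply exhibit $z$ on the segment from $x$ to the origin: take $z=x$ if $|x|\le r-t$, and $z=\bigl((r-t)/|x|\bigr)x$ otherwise, and check the two inequalities by hand.

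I do not anticipate a genuine obstacle here; the only points that need care are the degenerate endpoint $t=r$ (where $r-t=0$, so the only admissible centre is $z=0$, which works since $|x|<r$ means $x\in B(0,r)$) and the observation that, because the lemma concerns the open ball $B(0,r)$, the inclusion $B(z,t)\subset B(0,r)$ already makes the average of $\chi_{B(0,r)}$ over $B(z,t)$ equal to one. In particular the constant in \eqref{sup rho < Mr} is genuinely $1$, with no dimensional loss.
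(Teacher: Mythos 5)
Your proposal is correct and follows essentially the same route as the paper: fix $x\in B(0,r)$, for each $t\in(0,r]$ exhibit a ball $B(z,t)$ with $x\in B(z,t)\subset B(0,r)$ so that $\rho(t)=\rho(t)\mint_{B(z,t)}\chi_{B(0,r)}\le(\Mr\chi_{B(0,r)})(x)$, and take the supremum over $t$. The only difference is that you spell out the elementary geometric construction of the centre $z$, which the paper simply asserts.
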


\begin{proof}
Let $x\in B(0,r)$.
If $t\le r$, then we can choose a ball $B(z,t)$ such that $x\in B(z,t)\subset B(0,r)$.
Hence,
\begin{align*}
 \rho(t)=\rho(t)\mint_{B(z,t)}\chi_{B(0,r)}(y)\,dy
 \le
 (\Mr\chi_{B(0,r)})(x).
\end{align*}
Therefore, we have \eqref{sup rho < Mr}.
\end{proof}

\begin{proof}[Proof of Theorem~\ref{thm:Mr} {\rm (ii)}]
By Lemma~\ref{lem:Mr chi} and the boundedness of $\Mr$ from $\LP(\R^n)$ to $\wLPs(\R^n)$
we have
\begin{equation*}
 \left(\sup_{0<t\le r}\rho(t)\right)\|\chi_{B(0,r)}\|_{\wLPs}
 \le
 \|\Mr\chi_{B(0,r)}\|_{\wLPs}
 \ls
 \|\chi_{B(0,r)}\|_{\LP}.
\end{equation*}
Then, by Lemma~\ref{lem:chi} and the doubling condition of $\Phi^{-1}$ and $\Psi^{-1}$
we have the conclusion.
\end{proof}

\section{Sharp maximal operators}\label{sec:sharp}

In this section,
to prove Theorem~\ref{thm:comm Ir}, we prove 
two propositions involving the sharp maximal operator $\Ms$ defined by \eqref{Ms}.

First we state the John-Nirenberg type theorem for the Campanato space,
which is known by \cite[Theorem~3.1]{Nakai2008AMS} for spaces of homogeneous type.
See also \cite{Arai-Nakai2017REMC} for its proof in the case of $\R^n$.

\begin{thm}\label{thm:J-N}
Let $p\in(1,\infty)$ and $\psi:(0,\infty)\to(0,\infty)$.
Assume that $\psi$ is almost increasing.
Then $\cL_{p,\psi}(\R^n)=\cL_{1,\psi}(\R^n)$
with equivalent norms.
\end{thm}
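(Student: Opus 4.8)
The inclusion $\cL_{p,\psi}(\R^n)\subset\cL_{1,\psi}(\R^n)$ with $\|f\|_{\cL_{1,\psi}}\le\|f\|_{\cL_{p,\psi}}$ is immediate from H\"older's (equivalently Jensen's) inequality applied to the mean over each ball, and uses no hypothesis on $\psi$. So the substance of the theorem is the reverse bound $\|f\|_{\cL_{p,\psi}}\lesssim\|f\|_{\cL_{1,\psi}}$, which is a John--Nirenberg-type estimate; the plan is to follow the scheme of \cite{Nakai2008AMS} and \cite{Arai-Nakai2017REMC}, obtaining an exponential distributional estimate on each ball by an iterated Calder\'on--Zygmund decomposition and then integrating.

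Fix a ball $B_0=B(x_0,r_0)$. Normalizing $\|f\|_{\cL_{1,\psi}}=1$ and replacing $f$ by $f-f_{B_0}$ (which leaves every oscillation $f-f_B$, hence the seminorm, unchanged, while making $f_{B_0}=0$), I may assume $\frac1{|B_0|}\int_{B_0}|f|\le\psi(r_0)$. I would then perform a Calder\'on--Zygmund stopping-time decomposition of $|f|$ on $B_0$ at height $\lambda\psi(r_0)$ for a large constant $\lambda$, producing essentially disjoint subballs $\{B_{1,j}\}$ with $\frac1{|B_{1,j}|}\int_{B_{1,j}}|f|\sim\lambda\psi(r_0)$, with $|f|\le\lambda\psi(r_0)$ a.e.\ off $\bigcup_j B_{1,j}$, and with $\sum_j|B_{1,j}|\le\lambda^{-1}|B_0|$. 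The one place the hypothesis enters: since $r_{B_{1,j}}\le r_0$ and $\psi$ is almost increasing, $\psi(r_{B_{1,j}})\le C_\psi\psi(r_0)$, so $\frac1{|B_{1,j}|}\int_{B_{1,j}}|f-f_{B_{1,j}}|\le\psi(r_{B_{1,j}})\le C_\psi\psi(r_0)$, and if $\lambda\ge C_\psi$ the decomposition at the \emph{same} height $\lambda\psi(r_0)$ can be repeated on each $B_{1,j}$. Iterating $k$ times yields level-$k$ balls of total measure $\le(C_\psi/\lambda)^k|B_0|$, and telescoping the averages along the chains $B_0\supset B_{1,j_1}\supset\cdots$ (each consecutive jump costing $\lesssim\lambda\psi(r_0)$, with one final step $\le\lambda\psi(r_0)$) shows $\{x\in B_0:|f(x)|>Ck\lambda\psi(r_0)\}$ is covered by the level-$k$ balls. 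Choosing $\lambda$ so large that $C_\psi/\lambda<1$ gives $|\{x\in B_0:|f(x)|>t\}|\le|B_0|\exp(-c\,t/\psi(r_0))$ for $t\ge c'\lambda\psi(r_0)$.

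Integrating $p\,t^{p-1}$ against this distributional bound, and splitting at $t\sim\lambda\psi(r_0)$, gives $\frac1{|B_0|}\int_{B_0}|f-f_{B_0}|^p\lesssim\psi(r_0)^p$ with constant depending only on $n$, $p$, and $C_\psi$; taking the supremum over $B_0$ finishes the proof. The step I expect to be the main obstacle is carrying out the stopping-time decomposition on balls rather than dyadic cubes, since a ball has no canonical dyadic subdivision: I would handle this either by working throughout with cubes and invoking the routine equivalence of the cube- and ball-versions of $\cL_{p,\psi}(\R^n)$, or by building the stopping balls as a Whitney/Besicovitch covering of the level sets of the local Hardy--Littlewood maximal function of $|f|$ and verifying the bounded-overlap and ``almost-disjoint children'' bookkeeping that the iteration and the telescoping estimate require. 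Everything else is the classical John--Nirenberg computation.
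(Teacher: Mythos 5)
Your scheme is correct and is essentially the intended one: the paper itself gives no proof of this theorem but refers to \cite{Nakai2008AMS} (spaces of homogeneous type) and \cite{Arai-Nakai2017REMC} (the case of $\R^n$), and the argument there is exactly the John--Nirenberg iteration you describe -- the only non-classical ingredient being that, inside a fixed ball $B_0=B(x_0,r_0)$, every stopping ball has radius at most $r_0$, so almost increasingness of $\psi$ lets you run all generations of the Calder\'on--Zygmund stopping at the single height $\lambda\psi(r_0)$, after which the exponential distributional bound and the integration against $p\,t^{p-1}$ give $\bigl(\mint_{B_0}|f-f_{B_0}|^p\bigr)^{1/p}\lesssim\psi(r_0)\|f\|_{\cL_{1,\psi}}$. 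One caveat on your proposed shortcuts for the decomposition: the first option, ``work with cubes and invoke the routine equivalence of the cube- and ball-versions of $\cL_{p,\psi}(\R^n)$,'' is not available here. Passing between a cube of sidelength $\ell$ and its circumscribed or inscribed ball changes the argument of $\psi$ by a fixed factor larger than $1$, and for a merely almost increasing $\psi$ (no doubling is assumed) one only gets $\psi(\ell)\lesssim\psi(c_n\ell)$, not the reverse; so the two norms need not be comparable, and even a decomposition on the circumscribed cube of $B_0$ would produce a bound in terms of $\psi(\sqrt n\,r_0)$ rather than $\psi(r_0)$. The fix is precisely your second option: build the stopping family from balls (or Whitney cubes) \emph{contained in} $B_0$, via the local maximal function and a Vitali/Whitney covering with bounded overlap, so that only values $\psi(r)$ with $r\le r_0$ ever enter and almost increasingness alone suffices; this is how the cited proofs proceed, and with that choice your argument goes through.
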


\begin{prop}\label{prop:pointwise Ir}
Assume that $\rho:(0,\infty)\to(0,\infty)$ satisfies \eqref{int rho}.
Let $\rho^*(r)$ be as in \eqref{rho*}.
Assume 
that 
$\psi$ is almost increasing,
that
$r\mapsto\rho(r)/r^{n-\epsilon}$ is almost decreasing for some $\epsilon>0$
and that 
the condition \eqref{rho conti} holds.
Then, for any $\eta\in(1,\infty)$, there exists a positive constant $C$ such that, 
for all $b\in\cL_{1,\psi}(\R^n)$, $f\in\Cic(\R^n)$ and $x\in\R^n$,
\begin{equation}\label{pointwise Ir}
 \Ms([b,\Ir]f)(x)
 \le
 C\|b\|_{\cL_{1,\psi}}
 \bigg(
  \big(M_{\psi^\eta}(|\Ir f|^{\eta})(x)\big)^{1/\eta}
  +\big(M_{(\rho^*\psi)^{\eta}}(|f|^{\eta})(x)\big)^{1/\eta}
 \bigg).
\end{equation}
\end{prop}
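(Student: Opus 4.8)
The strategy is the standard Garc\'ia-Cuerva--Rubio de Francia approach to sharp estimates for commutators, adapted to the generalized fractional integral and the generalized Campanato scale. Fix a ball $B=B(z,r)$ with $x\in B$. For any constant $c$ write
\begin{equation*}
 [b,\Ir]f=(b-c)\Ir f-\Ir((b-c)f),
\end{equation*}
and choose $c=b_{2B}$, the average of $b$ over $2B$. Split $f=f_1+f_2$ with $f_1=f\chi_{2B}$ and $f_2=f\chi_{(2B)^c}$. To control $\MO_B([b,\Ir]f)=\mint_B|[b,\Ir]f-d|$ for a suitable constant $d$, I would bound separately the three pieces
\begin{equation*}
 \text{(I)}=\mint_B|(b-b_{2B})\Ir f|,\quad
 \text{(II)}=\mint_B|\Ir((b-b_{2B})f_1)|,\quad
 \text{(III)}=\mint_B|\Ir((b-b_{2B})f_2)-d|,
\end{equation*}
with the natural choice $d=\Ir((b-b_{2B})f_2)(z)$ (or an average thereof).

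\textbf{Key steps.}
For (I), apply H\"older's inequality with exponents $\eta'$ and $\eta$: $\mint_B|b-b_{2B}|^{\eta'}\le\mint_{2B}|b-b_{2B}|^{\eta'}\ls(\|b\|_{\cL_{1,\psi}}\psi(r))^{\eta'}$ by the John--Nirenberg type Theorem~\ref{thm:J-N} (passing from $\cL_{1,\psi}$ to $\cL_{\eta',\psi}$), while $(\mint_B|\Ir f|^{\eta})^{1/\eta}\le(M_{\psi^\eta}(|\Ir f|^\eta)(x))^{1/\eta}/\psi(r)^{?}$ — more precisely, since $B\ni x$, $\psi(r)^\eta\mint_B|\Ir f|^\eta\le M_{\psi^\eta}(|\Ir f|^\eta)(x)$, so (I) $\ls\|b\|_{\cL_{1,\psi}}(M_{\psi^\eta}(|\Ir f|^\eta)(x))^{1/\eta}$. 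For (II), I would first bound $\Ir$ on $L^\eta$-type expressions: since $r\mapsto\rho(r)/r^{n-\epsilon}$ is almost decreasing, $\Ir$ enjoys a local smoothing estimate, and using H\"older again on the factor $(b-b_{2B})f_1$ together with Theorem~\ref{thm:J-N} gives a bound by $\|b\|_{\cL_{1,\psi}}\psi(2r)(M_{\rho^{*\eta}}(|f|^\eta)(x))^{1/\eta}$ up to the size of $\Ir\chi_{2B}\sim\rho^*(r)$; absorbing $\psi(2r)\sim\psi(r)$ by the doubling of $\psi$ (almost increasing plus the growth control), this is dominated by $\|b\|_{\cL_{1,\psi}}(M_{(\rho^*\psi)^\eta}(|f|^\eta)(x))^{1/\eta}$. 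The term (III) is where the continuity hypothesis \eqref{rho conti} enters: for $y\in(2B)^c$ and $w\in B$, the difference $\frac{\rho(|w-y|)}{|w-y|^n}-\frac{\rho(|z-y|)}{|z-y|^n}$ is controlled by $|w-z|\,\rho^*(|z-y|)/|z-y|^{n+1}\ls (r/|z-y|)\,\rho^*(|z-y|)/|z-y|^n$ thanks to \eqref{rho conti} and Lemma~\ref{lem:rho dec} (which upgrades the almost-decreasing property of $\rho(r)/r^{n-\epsilon}$ to $\rho^*$). Summing over dyadic annuli $2^{k+1}B\setminus 2^kB$, using $\mint_{2^{k}B}|b-b_{2B}|\ls k\,\|b\|_{\cL_{1,\psi}}\psi(2^k r)$ (telescoping of Campanato averages) against the decay factor $2^{-k}$ and the almost-monotonicity of $\rho^*(s)\psi(s)/s^{?}$, the series converges and is bounded by $\|b\|_{\cL_{1,\psi}}(M_{(\rho^*\psi)^\eta}(|f|^\eta)(x))^{1/\eta}$ as well. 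Taking the supremum over balls $B\ni x$ yields \eqref{pointwise Ir}.

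\textbf{Main obstacle.}
I expect the delicate point to be term (III): making the dyadic sum converge requires simultaneously exploiting (a) the extra decay $2^{-k}$ coming from the smoothness estimate \eqref{rho conti}, (b) the at-most-logarithmic ($\sim k$) growth of the Campanato oscillations $\mint_{2^kB}|b-b_{2B}|$, and (c) the almost-monotonicity of the relevant product of $\rho^*$ and $\psi$ so that the annulus contributions are comparable to their value at scale $r$ and can be recognized as a single value of $M_{(\rho^*\psi)^\eta}(|f|^\eta)(x)$. Getting the right weight $\rho^*\psi$ (rather than, say, $\rho\psi$) out of this sum is exactly what forces the hypotheses "$r\mapsto\rho(r)/r^{n-\epsilon}$ almost decreasing'' (via Lemma~\ref{lem:rho dec}/Remark~\ref{rem:rho dec} giving the doubling of $\rho^*$) and \eqref{rho conti}; the bookkeeping of constants depending on $\eta$, $\epsilon$, and the almost-monotonicity constants is the routine-but-careful part. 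Everything else — (I) and (II) — is a fairly direct application of H\"older's inequality, Theorem~\ref{thm:J-N}, and the elementary size estimate $\Ir\chi_{2B}(w)\ls\rho^*(r)$ for $w\in B$.
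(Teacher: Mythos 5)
Your proposal follows the same skeleton as the paper's proof: the decomposition $[b,\Ir]f=(b-b_{2B})\Ir f-\Ir((b-b_{2B})f\chi_{2B})-\Ir((b-b_{2B})f\chi_{(2B)^c})$, subtraction of the constant $\Ir((b-b_{2B})f_2)$ evaluated at a fixed point of $B$, H\"older plus the John--Nirenberg Theorem~\ref{thm:J-N} for the first term, and the smoothness condition \eqref{rho conti} with a dyadic-annuli sum and Lemma~\ref{lem:int f-fB} for the far term, arriving at exactly the weights $\psi^\eta$ and $(\rho^*\psi)^\eta$. The one genuine difference is the local term (II): you use the elementary kernel-size bound (in effect, averaging over $B$ and applying Fubini, so that $\mint_B\int_{2B}\frac{\rho(|y-z|)}{|y-z|^n}\,dy\,dz$ produces the factor $\rho^*(Ct)$, with doubling of $\rho^*$ from Remark~\ref{rem:rho dec}), whereas the paper instead chooses $1<v<\eta$ with $\rho(r)/r^{n/v-\epsilon/2}$ almost decreasing, invokes Corollary~\ref{cor:Ir} to get boundedness of $\Ir$ from $L^v$ into an Orlicz space with $\Psi^{-1}(1/r^n)\sim r^{-n/v}\rho^*(r)$, and then uses the generalized H\"older inequality \eqref{g Holder} together with \eqref{chi norm}; your route is more elementary and avoids the auxiliary exponents $u,v$, while the paper's reuses machinery already established for Theorem~\ref{thm:Ir} --- both yield the same $\rho^*(t)\psi(t)$ bound. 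Two small corrections to your bookkeeping: you may not assume $\psi(2r)\sim\psi(r)$, since $\psi$ is only almost increasing and no doubling is hypothesized, but this is unnecessary --- keep $\psi(2r)$ (resp. $\psi(2^kr)$, $\rho^*(2^kr)$) and observe that the dilated ball still contains $x$, so the product times the average over that ball is directly dominated by $M_{\psi^\eta}$ or $M_{(\rho^*\psi)^\eta}$ at $x$; likewise, in (III) you do not need any almost-monotonicity bringing $\rho^*(2^k r)\psi(2^k r)$ back to scale $r$ (which would in general be false): each annulus contribution is at most $C\,k2^{-k}\|b\|_{\cL_{1,\psi}}\big(M_{(\rho^*\psi)^{\eta}}(|f|^{\eta})(x)\big)^{1/\eta}$ on its own, and the series $\sum_k k2^{-k}$ converges, which is exactly how the paper closes the estimate.
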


To prove the proposition we need the following known lemma, 
for its proof, see Lemma~4.7 and Remark~4.1 in \cite{Arai-Nakai2017REMC} for example.

\begin{lem}\label{lem:int f-fB}
Let $p\in[1,\infty)$. 
Assume that $\psi$ is almost increasing.
Then there exists a positive constant $C$ such that,
for all $f\in\cL_{1,\psi}$, $x\in\R^n$ and $r,s\in(0,\infty)$,
\begin{equation*}
 \left(\mint_{B(x,s)}|f(y)-f_{B(x,r)}|^p\,dy\right)^{1/p}
 \le
 C\left(1+\log_2\frac sr\right)\psi(s)\,\|f\|_{\cL_{1,\psi}},
 \quad\text{if} \ \ r\le s.
\end{equation*}
\end{lem}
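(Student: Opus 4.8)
The plan is to split the quantity by the triangle inequality, estimate the $L^p$ mean oscillation of $f$ on $B(x,s)$ directly, and control the difference of averages $f_{B(x,s)}-f_{B(x,r)}$ by a dyadic telescoping chain. First I would reduce to the $\cL_{p,\psi}$ norm: by Theorem~\ref{thm:J-N} (trivially true for $p=1$) we have $\cL_{1,\psi}(\R^n)=\cL_{p,\psi}(\R^n)$ with $\|f\|_{\cL_{p,\psi}}\ls\|f\|_{\cL_{1,\psi}}$. Assuming $r\le s$ and regarding $dy/|B(x,s)|$ as a probability measure on $B(x,s)$, the triangle inequality in $L^p$ gives
\begin{equation*}
 \left(\mint_{B(x,s)}|f(y)-f_{B(x,r)}|^p\,dy\right)^{1/p}
 \le
 \left(\mint_{B(x,s)}|f(y)-f_{B(x,s)}|^p\,dy\right)^{1/p}
 +|f_{B(x,s)}-f_{B(x,r)}|,
\end{equation*}
and the first term on the right is at most $\psi(s)\|f\|_{\cL_{p,\psi}}\ls\psi(s)\|f\|_{\cL_{1,\psi}}$ by the definition of $\cL_{p,\psi}$.

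It remains to bound $|f_{B(x,s)}-f_{B(x,r)}|$. I would let $N\ge0$ be the integer with $2^Nr\le s<2^{N+1}r$, so that $N\le\log_2(s/r)$, and set $B_k=B(x,2^kr)$ for $0\le k\le N$. Then $B_0=B(x,r)$, $B_N\subset B(x,s)$ with $|B(x,s)|/|B_N|<2^n$, and $|B_{k+1}|/|B_k|=2^n$, so
\begin{equation*}
 |f_{B(x,s)}-f_{B(x,r)}|
 \le
 |f_{B(x,s)}-f_{B_N}|+\sum_{k=0}^{N-1}|f_{B_{k+1}}-f_{B_k}|.
\end{equation*}
For any two balls $B'\subset B''$, writing $\sigma$ for the radius of $B''$, one has $|f_{B'}-f_{B''}|\le\mint_{B'}|f-f_{B''}|\le(|B''|/|B'|)\mint_{B''}|f-f_{B''}|\le(|B''|/|B'|)\,\psi(\sigma)\,\|f\|_{\cL_{1,\psi}}$. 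Applying this to each pair $(B_k,B_{k+1})$ and to $(B_N,B(x,s))$, the volume ratios are all $\le2^n$ and the radii of the larger balls are $2^{k+1}r$ (for $k\le N-1$) and $s$, all lying in $(0,s]$; hence by the almost increasing hypothesis on $\psi$ one obtains $\psi(2^{k+1}r)\ls\psi(s)$, so each of the $N+1$ terms is $\ls\psi(s)\|f\|_{\cL_{1,\psi}}$, with implicit constant depending only on $n$ and the almost increasing constant of $\psi$.

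Summing the $N+1$ terms gives $|f_{B(x,s)}-f_{B(x,r)}|\ls(N+1)\psi(s)\|f\|_{\cL_{1,\psi}}\ls(1+\log_2(s/r))\psi(s)\|f\|_{\cL_{1,\psi}}$, and combining with the estimate of the oscillation term (together with $1\le1+\log_2(s/r)$) yields the assertion. The step that needs the most care is the design of the dyadic chain: it should climb up only to $B_N$, the largest dyadic ball still \emph{contained} in $B(x,s)$, so that every intermediate radius stays $\le s$ and the almost increasing property of $\psi$ can be used in the form $\psi(\,\cdot\,)\ls\psi(s)$; routing the chain through a ball that strictly contains $B(x,s)$ would leave $\psi$ evaluated at a scale exceeding $s$, which the hypotheses do not control. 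The remaining work — checking the volume ratios, tracking that the constants depend only on $n$, $p$ and the almost increasing constant of $\psi$, and handling the edge case $r=s$ (where $N=0$ and the sum is empty, the single term $|f_{B_N}-f_{B(x,s)}|$ being estimated directly) — is routine.
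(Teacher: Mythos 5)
Your proof is correct, and it is the standard argument for this lemma: split off the oscillation on $B(x,s)$ (handled by the definition of $\cL_{p,\psi}$ together with Theorem~\ref{thm:J-N}) and control $|f_{B(x,s)}-f_{B(x,r)}|$ by a dyadic telescoping chain whose intermediate radii stay below $s$, so that the almost increasing property of $\psi$ applies. The paper itself gives no proof, referring instead to Lemma~4.7 and Remark~4.1 of \cite{Arai-Nakai2017REMC}, and your argument is essentially the one found there, with the constants depending only on $n$, $p$ and the almost increasing constant of $\psi$ as required.
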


\begin{proof}[Proof of Proposition~\ref{prop:pointwise Ir}]
For any ball $B=B(x,t)$, let $f=f_1+f_2$ with $f_1=f\chi_{2B}$, 
and let
\begin{align*}
 F_1(y)&=(b(y)-b_{2B})\Ir f(y), \\
 F_2(y)&=\Ir((b-b_{2B})f_1)(y), \\
 F_3(y)&=\Ir((b-b_{2B})f_2)(y) - C_B,
\end{align*}
for $y\in B$,
where $C_B=\Ir((b-b_{2B})f_2)(x)$ and
\begin{equation*}
 \Ir((b-b_{2B})f_2)(y)
 =
 \int_{\R^n}
 \frac{\rho(|y-z|)}{|y-z|^n}(b(z)-b_{2B})f_2(z)\,dz,
 \quad
 y\in B.
\end{equation*}
Then we have
\begin{equation*}
 [b,\Ir]f + C_B
 = [b-b_{2B},\Ir]f + C_B
 = F_1-F_2-F_3.
\end{equation*}
We show that
\begin{multline}\label{B mean}
 \mint_B|F_i(y)|\,dy \\
 \le
 C\|b\|_{\cL_{1,\psi}} 
 \bigg(
  \big(M_{\psi^\eta}(|\Ir f|^{\eta})(x)\big)^{1/\eta}
  +\big(M_{(\rho^*\psi)^{\eta}}(|f|^{\eta})(x)\big)^{1/\eta}
 \bigg),
 \quad i=1,2,3.
\end{multline}
Then we have the conclusion.

Now, by H\"older's inequality with $1/\eta+1/\eta'=1$ and Theorem~\ref{thm:J-N} we have
\begin{align*}
 \mint_B|F_1(y)|\,dy 
 &\le
 \left(\mint_B|b(y)-b_{2B}|^{\eta'}\,dy\right)^{1/\eta'}
 \left(\mint_B|\Ir f(y)|^{\eta}\,dy\right)^{1/\eta} \\
 &=
 \frac1{\psi(t)}\left(\mint_B|b(y)-b_{2B}|^{\eta'}\,dy\right)^{1/\eta'}
 \left(\psi(t)^\eta\mint_B|\Ir f(y)|^{\eta}\,dy\right)^{1/\eta} \\
 &\ls
 \|b\|_{\cL_{1,\psi}} \big(M_{\psi^\eta}(|\Ir f|^{\eta})(x)\big)^{1/\eta}.
\end{align*}
Choose $v\in(1,\eta)$ such that $n/v-\epsilon/2\ge n-\epsilon$.
Then by the almost decreasingness of $r\mapsto\rho(r)/r^{n-\epsilon}$
we have the almost decreasingness of $r\mapsto\rho(r)/r^{n/v-\epsilon/2}$.
Hence, from Corollary~\ref{cor:Ir} it follows that
there exists an N-function $\Psi$ such that 
$I_{\rho}$ is bounded from $L^v(\R^n)$ to $L^{\Psi}(\R^n)$.
Let $\cPsi$ be the complementary function of $\Psi$.
Then by the generalized H\"older's inequality \eqref{g Holder}, 
\eqref{chi norm}, \eqref{s Psi} and the boundedness of $I_{\rho}$ 
we have
\begin{align*}
 \mint_B|F_2(y)|\,dy
 &\le
 \frac2{|B|}\|\chi_B\|_{\LcPs(\R^n)}\|F_2\|_{\LPs(\R^n)} \\
 &\ls
 \Psi^{-1}(1/|B|)\|(b-b_{2B})f_1\|_{L^v(\R^n)} \\
 &\ls
 \frac{\rho^*(t)}{|B|^{1/v}}\|(b-b_{2B})f\|_{L^v(2B)}.
\end{align*}
Let $1/v=1/u+1/\eta$. Then by H\"older's inequality and Theorem~\ref{thm:J-N} we have
\begin{align*}
 &\mint_B|F_2(y)|\,dy \\
 &\ls 
 \rho^*(t)\left(\mint_{2B}|b(y)-b_{2B}|^u\,dy\right)^{1/u}
 \left(\mint_{2B}|f(y)|^{\eta}\,dy\right)^{1/\eta} \\
 &\ls
 \frac1{\psi(2t)}\left(\mint_{2B}|b(y)-b_{2B}|^u\,dy\right)^{1/u}
 \left((\rho^*(2t)\psi(2t))^{\eta}\mint_{2B}|f(y)|^{\eta}\,dy\right)^{1/\eta} \\
 &\ls
 \|b\|_{\cL_{1,\psi}} \big(M_{(\rho^*\psi)^{\eta}}(|f|^{\eta})(x)\big)^{1/\eta}.
\end{align*}
Finally, using the relation
\begin{equation*}
 \frac12\le\frac{|y-z|}{|x-z|}\le2 
 \quad\text{for $y\in B$ and $z\notin2B$}
\end{equation*}
and \eqref{rho conti}, we have
\begin{align*}
 |F_3(y)|
 &=|\Ir((b-b_{2B})f_2)(y) - \Ir((b-b_{2B})f_2)(x)| \\
 &=
  \left|
   \int_{\R^n}
   \left(\frac{\rho(|y-z|)}{|y-z|^n}-\frac{\rho(|x-z|)}{|x-z|^n}\right)(b(z)-b_{2B})f_2(z)\,dz
  \right| \\
 &\ls
  \int_{\R^n\setminus2B}
    \frac{|x-y|\rho^*(|x-z|)}{|x-z|^{n+1}}
    |b(z)-b_{2B}||f(z)|\,dz   \\
 &=
  \sum_{j=0}^{\infty}
   \int_{2^{j+2}B\setminus2^{j+1}B}
    \frac{|x-y|\rho^*(|x-z|)}{|x-z|^{n+1}}
    |b(z)-b_{2B}||f(z)|\,dz.
\end{align*}
By the doubling condition of $\rho^*$ (see Remark~\ref{rem:rho dec}),
H\"older's inequality
and Lemma~\ref{lem:int f-fB} 
we have
\begin{align*}
 & \int_{2^{j+2}B\setminus2^{j+1}B}
    \frac{|x-y|\rho^*(|x-z|)}{|x-z|^{n+1}}
    |b(z)-b_{2B}||f(z)|\,dz   \\
 &\ls
   \frac{t\rho^*(2^{j+2}t)}{(2^{j+2}t)^{n+1}}
   \int_{2^{j+2}B\setminus2^{j+1}B}
    |b(z)-b_{2B}||f(z)|\,dz   \\
 &\ls
   \frac{\rho^*(2^{j+2}t)}{2^{j+2}}
   \left(\mint_{2^{j+2}B}|b(z)-b_{2B}|^{\eta'}\,dz\right)^{1/\eta'}
   \left(\mint_{2^{j+2}B}|f(z)|^{\eta}\,dz\right)^{1/\eta}\\
 &\le
   \frac{j+2}{2^{j+2}}\|b\|_{\cL_{1,\psi}}
   \left((\rho^*(2^{j+2}t)\psi(2^{j+2}t))^\eta\mint_{2^{j+2}B}|f(z)|^{\eta}\,dz\right)^{1/\eta}.
\end{align*}
Then
\begin{align*}
 |F_3(y)|
 &\ls
  \|b\|_{\cL_{1,\psi}}
  \sum_{j=0}^{\infty}
   \frac{j+2}{2^{j+2}}
   \left((\rho^*(2^{j+2}t)\psi(2^{j+2}t))^\eta\mint_{2^{j+2}B}|f(z)|^{\eta}\,dz\right)^{1/\eta} \\
 &\ls
 \|b\|_{\cL_{1,\psi}}
 \big(M_{(\rho^*\psi)^{\eta}}(|f|^{\eta})(x)\big)^{1/\eta},
\end{align*}
which shows
\begin{equation*}
 \mint_B|F_3(y)|\,dy 
 \ls
 \|b\|_{\cL_{1,\psi}}
 \big(M_{(\rho^*\psi)^{\eta}}(|f|^{\eta})(x)\big)^{1/\eta}.
\end{equation*}
Therefore, we have \eqref{B mean} and the conclusion.
\end{proof}

Next we define the dyadic maximal operator $\Md$.
We denote by $\cQd$ the set of all dyadic cubes, that is,
\begin{equation*}
 \cQd
 =
 \left\{Q_{j,k}=\prod_{i=1}^{n}[2^{-j}k_i,2^{-j}(k_i+1)): j\in\Z,\ k=(k_1,\dots,k_n)\in\Z^n \right\}.
\end{equation*}
Then we define 
\begin{equation*}
  \Md f(x)
  =
  \sup_{R\in\cQd,\,R\ni x}\ \mint_R|f(y)|\,dy,
 \quad x\in\R^n,
\end{equation*}
where the supremum is taken over all $R\in\cQd$ containing $x$.

Next we prove the following proposition.

\begin{prop}\label{prop:sharp LP}
Let $\Phi\in\dtwo$.
If $\Md f\in\LP(\R^n)$,
then
\begin{equation}\label{sharp LP}
 \|\Md f\|_{\LP}
 \le
 C\|\Ms f\|_{\LP}.
\end{equation}
where $C$ is a positive constant which is dependent only on $n$ and $\Phi$.
\end{prop}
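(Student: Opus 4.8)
The plan is to reduce \eqref{sharp LP} to the classical good-$\lambda$ inequality between the dyadic maximal operator $\Md$ and the sharp maximal operator $\Ms$, and then transfer that distributional estimate into the Orlicz norm using the $\dtwo$-condition on $\Phi$. Recall the Fefferman--Stein type good-$\lambda$ inequality: there exist constants $c,C>0$ depending only on $n$ such that for every $\gamma\in(0,1)$ and every $\lambda>0$,
\begin{equation*}
 |\{x\in\R^n: \Md f(x)>2\lambda,\ \Ms f(x)\le\gamma\lambda\}|
 \le
 C\gamma\,|\{x\in\R^n:\Md f(x)>\lambda\}|,
\end{equation*}
which holds under the a priori assumption that $\Md f$ belongs to a space guaranteeing finiteness of the relevant level sets (here $\Md f\in\LP(\R^n)$ plays that role). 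This is standard via a Calder\'on--Zygmund decomposition at height $\lambda$ applied to $\Md f$ on dyadic cubes.

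Next I would pass from this pointwise-distributional statement to norms. Set $u=\Ms f$ and work with the layer-cake representation of the modular $\int_{\R^n}\Phi(\Md f/\mu)\,dx$ for a normalization parameter $\mu>0$. Writing $E_\lambda=\{\Md f>\lambda\}$ and $F_\lambda=\{\Ms f>\lambda\}$, the good-$\lambda$ inequality gives $|E_{2\lambda}|\le C\gamma|E_\lambda|+|F_{\gamma\lambda}|$. Multiplying by $\Phi'(\cdot)$ (which exists a.e. by \eqref{derivative} since $\Phi$ is a Young function) and integrating in $\lambda$, then using the $\dtwo$-condition in the sharpened form $\Phi'(2t)\le C_\Phi\Phi'(t)$ from Lemma~\ref{lem:diff D2} to absorb the dilation by $2$, one obtains
\begin{equation*}
 \int_{\R^n}\Phi(\Md f)\,dx
 \le
 C_\Phi C\gamma\int_{\R^n}\Phi(\Md f)\,dx
 + C_\Phi\int_{\R^n}\Phi\!\left(\tfrac1\gamma\,\Ms f\right)dx.
\end{equation*}
Choosing $\gamma$ small enough that $C_\Phi C\gamma\le 1/2$ and using $\Md f\in\LP(\R^n)$ (so the left side is finite and can be absorbed), we get $\int\Phi(\Md f)\le 2C_\Phi\int\Phi(\gamma^{-1}\Ms f)$; iterating $\dtwo$ finitely many times to replace $\gamma^{-1}$ by a power of $2$ then yields $\int\Phi(\Md f)\le C'\int\Phi(\Ms f)$ with $C'$ depending only on $n$ and $\Phi$, and a further $\dtwo$-iteration on the outer constant converts this modular inequality into the norm inequality \eqref{sharp LP} by the standard homogeneity argument for Luxemburg norms of $\dtwo$-functions.

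The main obstacle is the circularity inherent in absorbing $\int\Phi(\Md f)$ from both sides: this step is only legitimate because of the standing hypothesis $\Md f\in\LP(\R^n)$, which must be invoked carefully (the finiteness is needed not just at the end but to make the layer-cake manipulations rigorous). A secondary technical point is the truncation needed to justify interchanging the $\lambda$-integral with the spatial integral and to handle the fact that $\Phi$ may take the value $+\infty$ — but since $\Phi\in\dtwo\subset\cY$ we have $b(\Phi)=\infty$, so $\Phi$ is finite-valued, and \eqref{derivative} applies on all of $[0,\infty)$, which removes that difficulty. The dilation-invariance of the argument is exactly what Lemma~\ref{lem:diff D2} is designed to supply, so the only genuinely delicate bookkeeping is tracking that the final constant depends only on $n$ and $\Phi$ (through $C_\Phi$ and the number of $\dtwo$-iterations), not on $f$.
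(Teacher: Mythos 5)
Your proposal is correct and follows essentially the same route as the paper: the dyadic good-$\lambda$ inequality between $\Md$ and $\Ms$, integrated against $\Phi'$ in a layer-cake fashion, with Lemma~\ref{lem:diff D2} supplying the doubling of $\Phi'$ and the hypothesis $\Md f\in\LP(\R^n)$ (together with $\dtwo$) justifying the absorption of the finite term, followed by the standard passage from the modular inequality to the Luxemburg norm inequality. The paper implements the same absorption via the truncated integrals $I_N=\int_0^N\Phi'(\lambda)|\{\Md f>\lambda\}|\,d\lambda$, which is exactly the truncation you flag as the secondary technical point.
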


The following lemma is well known as the good lambda inequality, 
see \cite[Theorem~3.4.4.]{Grafakos2014GTM250} for example.
\begin{lem}\label{lem:good lambda}
For all $\gamma>0$, all $\lambda>0$, 
and all locally integrable functions $f$ on $\R^n$, 
the following estimate holds.
\begin{equation*}
 |\{x\in\R^n:\Md f(x)>2\lambda, \Ms f(x)\le\gamma\lambda\}|
 \le
 2^n\gamma|\{x\in\R^n:\Md f(x)>\lambda\}|.
\end{equation*}
\end{lem}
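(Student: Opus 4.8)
\textbf{Proof proposal for Proposition~\ref{prop:sharp LP}.}
The plan is to run the classical Fefferman--Stein good-$\lambda$ argument in the Orlicz modular setting, using Lemma~\ref{lem:good lambda} as the main technical input and the $\Delta_2$-condition on $\Phi$ to pass from the distributional inequality to a modular inequality, and finally from the modular inequality to the norm inequality~\eqref{sharp LP}.

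First I would normalize: since $\Md f\in\LP(\R^n)$, the function $\Phi(\Md f/\lambda_0)$ is integrable for some $\lambda_0>0$, and because $\Phi\in\dtwo\subset\cY$ we may in fact assume $\int_{\R^n}\Phi(\Md f)\,dx<\infty$ after rescaling $f$ (the final norm inequality is invariant under $f\mapsto cf$). The key computation is the layer-cake formula: for a Young function $\Phi$ with $\Phi(t)=\int_0^t\Phi'(s)\,ds$,
\begin{equation*}
 \int_{\R^n}\Phi(\Md f)\,dx
 =
 \int_0^{\infty}\Phi'(\lambda)\,|\{\Md f>\lambda\}|\,d\lambda.
\end{equation*}
Replacing $\lambda$ by $2\lambda$ and splitting $\{\Md f>2\lambda\}$ into the part where $\Ms f\le\gamma\lambda$ and the part where $\Ms f>\gamma\lambda$, Lemma~\ref{lem:good lambda} gives
\begin{equation*}
 |\{\Md f>2\lambda\}|
 \le
 2^n\gamma\,|\{\Md f>\lambda\}| + |\{\Ms f>\gamma\lambda\}|.
\end{equation*}
Multiplying by $\Phi'(2\lambda)$, integrating in $\lambda$ over $(0,\infty)$, and using the change of variables together with Lemma~\ref{lem:diff D2} (so that $\Phi'(2\lambda)\sim\Phi'(\lambda)$, with constant $C_\Phi$ depending only on $\Phi$), the left-hand side becomes comparable to $\int_{\R^n}\Phi(\Md f)\,dx$, the first term on the right becomes $\le C_\Phi 2^n\gamma\int_{\R^n}\Phi(\Md f)\,dx$, and the second term, after the substitution $\mu=\gamma\lambda$ and another application of Lemma~\ref{lem:diff D2} to compare $\Phi'(2\mu/\gamma)$ with $\Phi'(\mu)$ (this costs a constant depending on $\gamma$ and $\Phi$, which is harmless since $\gamma$ will be fixed), is bounded by $C_{\Phi,\gamma}\int_{\R^n}\Phi(\Ms f)\,dx$. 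Choosing $\gamma$ small enough that $C_\Phi 2^n\gamma\le 1/2$ and absorbing, and using finiteness of $\int\Phi(\Md f)$ to justify the absorption, we obtain the modular inequality
\begin{equation*}
 \int_{\R^n}\Phi(\Md f)\,dx
 \le
 C\int_{\R^n}\Phi(\Ms f)\,dx,
\end{equation*}
with $C$ depending only on $n$ and $\Phi$.

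Finally I would upgrade this modular estimate to the norm estimate~\eqref{sharp LP}. Put $\lambda=\|\Ms f\|_{\LP}$; by definition of the Luxemburg norm (and left-continuity), $\int_{\R^n}\Phi(\Ms f/\lambda)\,dx\le 1$. Applying the modular inequality to $f/\lambda$ in place of $f$ (note $\Md(f/\lambda)=\Md f/\lambda$ and $\Ms(f/\lambda)=\Ms f/\lambda$) yields $\int_{\R^n}\Phi(\Md f/\lambda)\,dx\le C$. Since $\Phi\in\dtwo$, there is a constant $k=k(\Phi)$ with $\Phi(Ct)\le k\Phi(t)$ for a dilation absorbing the factor $C$ — more precisely, pick an integer $m$ with $2^m\ge C$, so $\Phi(2^{-m}\cdot)\le C^{-m}_{\dtwo}\Phi(\cdot)$ iterating \eqref{Delta2}; choosing the implied constant we get $\int_{\R^n}\Phi\big(\Md f/(2^m\lambda)\big)\,dx\le 1$, hence $\|\Md f\|_{\LP}\le 2^m\lambda=2^m\|\Ms f\|_{\LP}$, which is~\eqref{sharp LP}.

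The main obstacle I anticipate is the bookkeeping in the modular step: one must keep the constant produced by Lemma~\ref{lem:diff D2} separate from the small parameter $\gamma$ (applying the lemma a fixed number of times, not $\gamma$-many times), verify that the layer-cake identity is valid for $\Phi\in\dtwo$ (using~\eqref{derivative} and monotone convergence, which requires knowing $\Phi$ is finite-valued — true since $\dtwo\subset\cY$ so $b(\Phi)=\infty$), and — crucially — justify the absorption step, which needs $\int_{\R^n}\Phi(\Md f)\,dx<\infty$; this is exactly where the hypothesis $\Md f\in\LP(\R^n)$ is used. Everything else is routine.
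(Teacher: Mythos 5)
Your proposal does not prove the statement it was assigned. The statement is Lemma~\ref{lem:good lambda} itself, the good-$\lambda$ distributional inequality
\begin{equation*}
 |\{x\in\R^n:\Md f(x)>2\lambda,\ \Ms f(x)\le\gamma\lambda\}|
 \le 2^n\gamma\,|\{x\in\R^n:\Md f(x)>\lambda\}|,
\end{equation*}
whereas what you wrote is a proof of Proposition~\ref{prop:sharp LP} \emph{assuming} Lemma~\ref{lem:good lambda} as a known input. That is circular with respect to the task: the one genuinely new ingredient you were asked to supply --- the measure-theoretic estimate comparing the level sets of $\Md f$ and $\Ms f$ --- is exactly the step you import without proof. (In the paper this lemma is quoted from Grafakos, Theorem~3.4.4, and the argument you wrote is essentially the paper's own proof of Proposition~\ref{prop:sharp LP}, with the minor variant that you normalize the modular and absorb directly instead of working with the truncated integrals $I_N$; as a proof of that proposition it is fine, but that is not the statement at hand.)

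What is missing is the dyadic Calder\'on--Zygmund (stopping-time) argument. One writes $\{\Md f>\lambda\}$ as the disjoint union of the maximal dyadic cubes $Q_j$ with $\mint_{Q_j}|f|>\lambda$, so that the dyadic parent $\tQ_j$ satisfies $\mint_{\tQ_j}|f|\le\lambda$. On each $Q_j$ containing at least one point $\xi$ with $\Ms f(\xi)\le\gamma\lambda$, a point $x\in Q_j$ with $\Md f(x)>2\lambda$ must satisfy $\Md\bigl((f-f_{\tQ_j})\chi_{Q_j}\bigr)(x)>\lambda$, because the contribution of cubes containing $Q_j$ is controlled by $f_{\tQ_j}\le\lambda$; then the weak $(1,1)$ bound (with constant $1$) for the dyadic maximal operator gives
\begin{equation*}
 |\{x\in Q_j:\Md f(x)>2\lambda\}|
 \le\frac1{\lambda}\int_{Q_j}|f-f_{\tQ_j}|
 \le\frac{|\tQ_j|}{\lambda}\,\mint_{\tQ_j}|f-f_{\tQ_j}|
 \le\frac{2^n|Q_j|}{\lambda}\,\Ms f(\xi)
 \le 2^n\gamma\,|Q_j|,
\end{equation*}
and summing over $j$ (cubes with no such point $\xi$ contribute nothing to the left-hand side) yields the lemma, since $\{\Md f>2\lambda\}\subset\bigcup_j Q_j$. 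None of this appears in your write-up, so as an answer to the assigned statement it has a genuine gap: the entire proof is missing, not merely a detail.
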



\begin{proof}[Proof of Proposition~\ref{prop:sharp LP}]
For a positive real number $N$ we set
\begin{equation*}
 I_N=\int_0^N \Phi'(\lambda)|\{x\in\R^n:\Md f(x)>\lambda\}|\,d\lambda.
\end{equation*}
We note that $I_N\le\int_{\R^n}\Phi(\Md f(x))\,dx<\infty$.
By Lemma~\ref{lem:diff D2} we have
\begin{align*}
 I_N
 &=
 2\int_0^{N/2} \Phi'(2\lambda)|\{x\in\R^n:\Md f(x)>2\lambda\}|\,d\lambda \\
 &\le 
 2C_{\Phi}\int_0^{N/2} \Phi'(\lambda)|\{x\in\R^n:\Md f(x)>2\lambda\}|\,d\lambda.
\end{align*}
Then, using the good lambda inequality, we obtain the following sequence of inequalities:
\begin{align*}
 I_N 
 &\le
 2C_{\Phi} 
 \int_0^{N/2} \Phi'(\lambda) |\{x\in\R^n: \Md f(x) > 2\lambda\ , \Ms f(x) \le \gamma \lambda\ \}|\,d\lambda \\
 &\phantom{***************}
 + 2C_{\Phi} \int_0^{N/2} \Phi'(\lambda) |\{x\in\R^n: \Ms f(x) > \gamma \lambda \}| \,d\lambda \\
 &\le
 2^{n+1} C_{\Phi} \gamma \int_0^{N/2} \Phi'(\lambda) |\{x\in\R^n: \Md f(x) > \lambda\}|\,d\lambda \\
 &\phantom{***************}
 + 2C_{\Phi} \int_0^{N/2} \Phi'(\lambda) |\{x\in\R^n: \Ms f(x) > \gamma \lambda \}| \,d\lambda \\
 &\le 2^{n+1} C_{\Phi} \gamma I_N
 + 2C_{\Phi} \frac{1}{\gamma} 
   \int_0^{N\gamma/2} \Phi'(\lambda/\gamma) |\{x\in\R^n: \Ms f(x) > \lambda \} | \,d\lambda. 
\end{align*}
At this point we let $2^{n+1}C_{\Phi}\gamma = 1/2$. 
Since $I_N$ is finite,
we can substract from both sides of the inequality the quantity $I_N/2$ to obtain
\begin{align*}
 I_N
 &\le
 2^{n+4}C_{\Phi}^2 
 \int_0^{N/(2^{n+3}C_{\Phi})} \Phi'(2^{n+2}C_{\Phi}\lambda) |\{x\in\R^n: \Ms f(x) > \lambda \} | \,d\lambda \\
 &\le
 C_{n,\Phi} 
 \int_0^{\infty} \Phi'(\lambda) |\{x\in\R^n: \Ms f(x) > \lambda \} | \,d\lambda, 
\end{align*}
where $C_{n,\Phi}$ is a constant dependent only on $n$ and $\Phi$,
from which we obtain
\begin{equation*}
 \int_{\R^n}\Phi(\Md f(x))\,dx\le C_{n,\Phi}\int_{\R^n}\Phi(\Ms f(x))\,dx.
\end{equation*}
This shows \eqref{sharp LP}.
\end{proof}

\section{Proof of Theorem~\ref{thm:comm Ir}}\label{sec:proof comm}

We first note that,
for $\theta\in(0,\infty)$,
\begin{equation}\label{theta}
 \||g|^{\theta}\|_{\LP}=\left(\|g\|_{L^{\Phi((\cdot)^{\theta})}}\right)^{\theta}.
\end{equation}

\begin{lem}\label{lem:Lic}
Under the assumption in Theorem~\ref{thm:comm Ir} (i),
if $f\in\Lic(\R^n)$, then $\Ir f\in\LPs(\R^n)$.
\end{lem}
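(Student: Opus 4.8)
The goal is to show that if $f\in\Lic(\R^n)$ then $\Ir f\in\LPs(\R^n)$ under the hypotheses of Theorem~\ref{thm:comm Ir}~(i). The natural route is to interpolate between a near-origin estimate and a far-field estimate. Fix a bounded $f$ supported in a ball $B(0,R)$ and write $\Ir f=\Ir(f\chi_{B(0,2R)})+\Ir(f\chi_{B(0,2R)^c})$, but in fact since $\supp f\subset B(0,R)$ we simply split the kernel: for $x$ near $\supp f$ we control $\Ir f(x)$ by the boundedness of $\Ir$ already available, and for $x$ far away we control it by the tail decay coming from $\int_0^\infty \rho(t)/t\,dt$-type bounds. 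The key hypotheses to invoke are that $\rho$ satisfies \eqref{int rho}, that $r\mapsto\rho(r)/r^{n-\epsilon}$ is almost decreasing for some $\epsilon\in(0,n)$, and the assumption \eqref{comm Ir A}–\eqref{comm Mr A} guaranteeing the boundedness of $\Ir$ from $\LP$ to $\LPs$ via Theorem~\ref{thm:Ir} (note $\Phi\in\bdtwo\cap\bntwo\subset\biPy$, so the strong-type conclusion applies).

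First I would handle the local part. Since $f\in\Lic(\R^n)$, we have $f\in\LP(\R^n)$ for every $\Phi\in\biPy$ (a bounded compactly supported function lies in every Orlicz space), so by Theorem~\ref{thm:Ir}, applied with $\Theta$ in place of $\Psi$ and then \eqref{comm Mr A} to pass from $\Theta$ to $\Psi$ — or more directly by combining \eqref{comm Ir A} and \eqref{comm Mr A} to verify \eqref{Ir A} for the pair $(\rho,\Phi,\Psi)$ — the operator $\Ir$ is bounded from $\LP(\R^n)$ to $\LPs(\R^n)$. Hence $\Ir f\in\LPs(\R^n)$ immediately, provided one first checks $\Ir f$ is well-defined pointwise and finite a.e., which follows from \eqref{int rho} for bounded compactly supported $f$ as remarked after \eqref{int rho}. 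In fact this already gives the whole lemma once we know $\Ir$ maps $\LP$ into $\LPs$ boundedly, so the real content is just assembling the hypothesis \eqref{Ir A} from \eqref{comm Ir A} and \eqref{comm Mr A} together with $\psi$ being almost increasing and bounded near each fixed scale — but $\psi$ need not be bounded, so one must be slightly careful that the composition $\psi(r)\Theta^{-1}(1/r^n)\le A\Psi^{-1}(1/r^n)$ is used as a genuine global inequality, which it is by assumption.

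So the cleanest argument is: \textbf{(i)} observe $f\in\Lic(\R^n)\subset\LP(\R^n)$; \textbf{(ii)} combine \eqref{comm Ir A} and \eqref{comm Mr A}, noting $\psi\ge$ a positive constant times $1$ is \emph{not} assumed, so instead just chain the two inequalities: $\int_0^r\frac{\rho(t)}{t}\,dt\,\Phi^{-1}(1/r^n)+\int_r^\infty\frac{\rho(t)\Phi^{-1}(1/t^n)}{t}\,dt\le A\Theta^{-1}(1/r^n)$ and then multiply — wait, \eqref{comm Mr A} has the extra factor $\psi(r)$, so one cannot directly conclude \eqref{Ir A} unless $\psi$ is bounded below. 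The correct move is therefore to apply Theorem~\ref{thm:Ir} with target space $\LT$ (the Orlicz space for $\Theta$), concluding $\Ir\colon\LP\to\LT$ boundedly; \textbf{(iii)} then since $f\in\Lic$, $\Ir f\in\LT(\R^n)$, and it remains to upgrade membership from $\LT$ to $\LPs$. This last step is where \eqref{comm Mr A} enters: it says $\Psi^{-1}$ dominates $\psi\cdot\Theta^{-1}$, hence (by Lemma~\ref{lem:inverse}-type reasoning) $\Theta$ grows faster than $\Psi$ in a quantified sense — but that would give $\LT\subset\LPs$ only on sets of finite measure, not globally.

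\textbf{The main obstacle}, then, is precisely this transfer from $\LT$ to $\LPs$: on all of $\R^n$ one does not have $\LT\hookrightarrow\LPs$ in general. The resolution I expect the authors to use is that $\Ir f$ for $f\in\Lic$ has both good local integrability (handled by the $\Ir\colon\LP\to\LT$ bound, or even by $\LP\to\LPs$ locally) \emph{and} good decay at infinity: for $|x|$ large compared to $\supp f\subset B(0,R)$, $|\Ir f(x)|\lesssim \|f\|_{L^1}\,\rho(|x|)/|x|^n$, and the almost-decreasingness of $\rho(r)/r^{n-\epsilon}$ forces this to decay like $|x|^{-n+\epsilon-\cdots}$, in particular $\Ir f\chi_{B(0,2R)^c}$ is bounded and decays fast enough to lie in $\LPs(\R^n)$ (any nice $\Psi\in\biPy$ absorbs a function that is both bounded and in $L^1$ near infinity, since $\Psi^{-1}(1/r^n)r^n$ is almost increasing — cf. Remark~\ref{rem:Mr}). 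Thus I would: split $\Ir f = \Ir f\cdot\chi_{B(0,2R)} + \Ir f\cdot\chi_{B(0,2R)^c}$; bound the first term using $\Ir\colon\LP\to\LPs$ (Theorem~\ref{thm:Ir} with the pair $(\rho,\Phi,\Psi)$, which \emph{is} valid when $\psi$ is bounded on bounded sets — and one may assume $\psi\ge c>0$ on $(0,3R]$ after rescaling, or simply note the local estimate only needs the finite-measure inclusion $\LT(B)\subset\LPs(B)$ from \eqref{comm Mr A}); and bound the second term by the explicit pointwise decay estimate plus the observation that such a function belongs to every Orlicz space in $\biPy$. Assembling these two pieces with the triangle inequality in $\LPs$ completes the proof.
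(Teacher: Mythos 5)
Your first two steps match the paper: $f\in\Lic(\R^n)\subset\LP(\R^n)$, and \eqref{comm Ir A} together with Theorem~\ref{thm:Ir} gives $\Ir f\in\LT(\R^n)$ (you are right that one cannot get $\LP\to\LPs$ directly, since $\psi$ need not be bounded below). The gaps are in how you pass from $\LT$ to $\LPs$. First, your disposal of the far-field piece is unjustified: for $|x|$ large you only get $|\Ir f(x)|\ls\|f\|_{L^1}\rho(|x|)/|x|^n$, and since $\int_1^\infty\rho(t)t^{-1}\,dt$ may diverge (e.g.\ $\rho(t)=t^\alpha$), this tail is neither in $L^1$ near infinity nor does it decay faster than a small negative power of $|x|$; a bounded function with such slow decay does \emph{not} belong to every Orlicz space in $\biPy$ (take $\Psi(t)=t^q$ with $\epsilon q<n$). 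Membership of the tail in $\LPs$ genuinely requires the structural link between $\Theta$, $\psi$ and $\Psi$, i.e.\ \eqref{comm Mr A}, which your argument never invokes for this piece. Second, your treatment of the local piece rests on two false claims: one cannot "assume $\psi\ge c>0$ on $(0,3R]$ after rescaling" (an almost increasing $\psi$ such as $\psi(r)=r^\beta$ vanishes at the origin, and in any case \eqref{Ir A} for $(\rho,\Phi,\Psi)$ would be needed for all $r$), and \eqref{comm Mr A} does \emph{not} yield the finite-measure inclusion $\LT(B)\subset\LPs(B)$. You have the direction of that comparison backwards: since $\psi$ is almost increasing, \eqref{comm Mr A} gives $\Theta^{-1}(u)\ls\Psi^{-1}(u)$ only for $u\le1$, i.e.\ $\Psi(t)\le\Theta(Ct)$ for \emph{small} $t$; a finite-measure embedding would need the inequality for large $t$, exactly where $\psi(r)\to0$ destroys the estimate.

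The repair is the paper's route, and it is close to your instinct about decay but uses boundedness instead: show $\Ir f\in\Li(\R^n)$ globally, via
\begin{equation*}
 |\Ir f(x)|\le\|f\|_{\Li}\int_{B(0,R)}\frac{\rho(|x-y|)}{|x-y|^{n}}\,dy
 \ls\|f\|_{\Li}\int_0^R\frac{\rho(t)}{t^{1-\epsilon}}\,dt<\infty,
\end{equation*}
which uses only \eqref{int rho} and the almost decreasingness of $r\mapsto\rho(r)/r^{n-\epsilon}$, and then prove the embedding $\LT(\R^n)\cap\Li(\R^n)\subset\LPs(\R^n)$: the inequality $\Psi(t)\le\Theta(Ct)$ for $t\le1$ (obtained from \eqref{comm Mr A} for $r\ge1$, where almost increasingness gives $\psi(r)\gs\psi(1)$) controls the small values of $\Ir f$, while the $\Li$ bound makes the large values irrelevant after normalization. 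Your decomposition into $B(0,2R)$ and its complement is not needed and, as written, neither piece is actually controlled.
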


\begin{proof}
If $f\in\Lic(\R^n)$, then $f\in\LP(\R^n)$,
since $\Lic(\R^n)\subset\LP(\R^n)$.
By \eqref{comm Ir A} and Theorem~\ref{thm:Ir}
$\Ir$ is bounded 
from $\LP(\R^n)$ to $L^{\Theta}(\R^n)$.
Then $\Ir f$ is in $L^{\Theta}(\R^n)$.
On the other hand, since $r\mapsto\rho(r)/r^{n-\epsilon}$ is almost decreasing,
if the support of $f$ is in $B(0,R)$, then
\begin{equation*}
 |\Ir f(x)|
 \le
 \|f\|_{L^{\infty}}\int_{B(0,R)}\frac{\rho(|x-y|)}{|x-y|^{n-\epsilon}}\,dy
 \ls
 \|f\|_{L^{\infty}}\int_0^R\frac{\rho(t)}{t^{1-\epsilon}}\,dt<\infty.
\end{equation*}
Then $\Ir f$ is in $L^{\Theta}(\R^n)\cap\Li(\R^n)$.

Next, by \eqref{comm Mr A} and the almost increasingness of $\psi$
we have
\begin{equation*}
 \Theta^{-1}(1/r^n)
 \ls
 \frac{\Psi^{-1}(1/r^n)}{\psi(r)}
 \ls
 \frac{\Psi^{-1}(1/r^n)}{\psi(1)}
 \quad\text{for}\quad r\ge1,
\end{equation*}
and then
\begin{equation*}
 \Theta^{-1}(u)
 \ls
 \Psi^{-1}(u)
 \quad\text{for}\quad u\le1.
\end{equation*}
Hence, we conclude that
\begin{equation*}
 \Psi(t)
 \le
 \begin{cases}
  \Theta(Ct), & t\le1,\\
  \infty, & t>1,
 \end{cases}
\end{equation*}
which shows that $L^{\Theta}(\R^n)\cap\Li(\R^n)\subset\LPs(\R^n)$.
\end{proof}

\begin{proof}[Proof of Theorem~\ref{thm:comm Ir} {\rm (i)}]
We may assume that $\Phi,\Psi\in\dtwo\cap\ntwo$ and $\Theta\in\ntwo$.
We may also assume that $b$ is real valued,
since the commutator $[b,\Ir ]f$ is linear with respect to $b$
and $\|\Re(b)\|_{\cL_{1,\psi}},\|\Im(b)\|_{\cL_{1,\psi}}\le\|b\|_{\cL_{1,\psi}}$.
Let 
\begin{equation*}
 b_k(x)=
 \begin{cases}
  k, & \text{if} \ b(x)>k, \\
  b(x), &  \text{if} \ -k\le b(x)\le k, \\
  -k, &  \text{if} \ b(x)<-k.
 \end{cases}
\end{equation*}
Then $b_k\in L^{\infty}(\R^n)$ and $\|b_k\|_{\cL_{1,\psi}}\le(9/4)\|b\|_{\cL_{1,\psi}}$.
For $f\in\Cic(\R^n)$, $b_kf$ lies in $\Lic(\R^n)$, thus $\Ir(b_kf)$ lies in $\LPs(\R^n)$
by Lemma~\ref{lem:Lic}.
Likewise, $b_k\Ir f$ also lies in $\LPs(\R^n)$.
Since $\Psi\in\ntwo$, $\Md[b,\Ir]f$ is also in $\LPs(\R^n)$.
From this fact and Propositions~\ref{prop:pointwise Ir} and \ref{prop:sharp LP}
it follows that
\begin{align*}
 \|[b_k,\Ir]f\|_{\LPs}
 &\le
 \|\Md([b_k,\Ir]f)\|_{\LPs}
 \ls
 \|\Ms([b_k,\Ir]f)\|_{\LPs} \\
 &\ls
 \|b\|_{\cL_{1,\psi}}
  \bigg(
  \left\|\big(M_{\psi^\eta}(|\Ir f|^{\eta})\big)^{1/\eta}\right\|_{\LPs}
  +\left\|\big(M_{(\rho^*\psi)^{\eta}}(|f|^{\eta})\big)^{1/\eta}\right\|_{\LPs}
 \bigg),
\end{align*}
here, we can choose $\eta\in(1,\infty)$ such that 
$\Phi((\cdot)^{1/\eta})$, $\Psi((\cdot)^{1/\eta})$ and $\Theta((\cdot)^{1/\eta})$ are in $\bntwo$
by Lemma~\ref{lem:eta}.
We show that
\begin{equation*}
  \left\|\big(M_{\psi^\eta}(|\Ir f|^{\eta})\big)^{1/\eta}\right\|_{\LPs}
  +\left\|\big(M_{(\rho^*\psi)^{\eta}}(|f|^{\eta})\big)^{1/\eta}\right\|_{\LPs}
 \ls
 \|f\|_{\LP},
\end{equation*}
where we note that $\psi^{\eta}$ and $(\rho^*\psi)^{\eta}$ are almost increasing.

By Theorems~\ref{thm:Ir} and \ref{thm:Mr} 
we see that 
$\Ir$ is bounded from $\LP(\R^n)$ to $\LT(\R^n)$
and
$M_{\psi^\eta}$ is bounded from $L^{\Theta((\cdot)^{1/\eta})}(\R^n)$ to $L^{\Psi((\cdot)^{1/\eta})}(\R^n)$,
respectively.
Then, using \eqref{theta}, we have
\begin{align*}
 \left\|\big(M_{\psi^\eta}(|\Ir f|^{\eta})\big)^{1/\eta}\right\|_{\LPs}
 &=
 \left(\left\|M_{\psi^\eta}(|\Ir f|^{\eta})\right\|_{L^{\Psi((\cdot)^{1/\eta})}}\right)^{1/\eta} \\
 &\ls
 \left(\left\||\Ir f|^{\eta}\right\|_{L^{\Theta((\cdot)^{1/\eta})}}\right)^{1/\eta} 
 =
 \|\Ir f\|_{\LT} 
 \ls
 \|f\|_{\LP}.
\end{align*}
From \eqref{comm Ir A} and \eqref{comm Mr A} 
it follows that
\begin{equation*}
 (\rho^*(r)\psi(r))^{\eta}\left({\Phi}^{-1}(1/r^n)\right)^{\eta}
 \le
 A^{2\eta}\left(\Psi^{-1}(1/r^n)\right)^{\eta}. 
\end{equation*}
By using Theorem~\ref{thm:Mr},
we have the boundedness of $M_{(\rho^*\psi)^{\eta}}$ 
from $L^{\Phi((\cdot)^{1/\eta})}$ to $L^{\Psi((\cdot)^{1/\eta})}$.
That is,
\begin{align*}
 \left\|\big(M_{(\rho^*\psi)^{\eta}}(|f|^{\eta})\big)^{1/\eta}\right\|_{\LPs}
 &=
 \left(\left\|M_{(\rho^*\psi)^{\eta}}(|f|^{\eta})\right\|_{L^{\Psi((\cdot)^{1/\eta})}}\right)^{1/\eta} \\
 &\ls
 \left(\left\|| f|^{\eta}\right\|_{L^{\Phi((\cdot)^{1/\eta})}}\right)^{1/\eta} 
 =
 \|f\|_{\LP}.
\end{align*}
Therefore, we obtain 
\begin{equation*}
 \|[b_k,\Ir]f\|_{\LPs}
 \ls
 \|b\|_{\cL_{1,\psi}}
 \|f\|_{\LP}
 \quad\text{for all} \ f\in\Cic(\R^n).
\end{equation*}
By the standard argument (see \cite[p.~240]{Grafakos2014GTM250} for example) 
we deduce that,
for some subsequence of integers $k_j$, 
$[b_{k_j} ,\Ir]f\to [b,\Ir]f$ a.e.
Letting $j\to\infty$ and using Fatou's lemma, 
we have
\begin{equation*}
 \|[b,\Ir]f\|_{\LPs}
 \ls
 \|b\|_{\cL_{1,\psi}}
 \|f\|_{\LP}
 \quad\text{for all} \ f\in\Cic(\R^n).
\end{equation*}
Since $\Cic(\R^n)$ is dense in $\LP(\R^n)$ (see Remark~\ref{rem:D2 n2} (ii)), 
it follows that the
commutator admits a bounded extension on $\LP(\R^n)$ that satisfies \eqref{comm Ir}.
\end{proof}

\begin{proof}[Proof of Theorem~\ref{thm:comm Ir} {\rm (ii)}]
We use the method by Janson~\cite{Janson1978}.
Since $|z|^{n-\alpha}$ is infinitely differentiable in an open set,
we may choose $z_0\ne0$ and $\delta>0$ such that 
$|z|^{n-\alpha}$ can be expressed in the neighborhood $|z-z_0|<2\delta$
as an absolutely convergent Fourier series, $|z|^{n-\alpha}=\sum a_je^{iv_j\cdot z}$.
(The exact form of the vectors $v_j$ is irrelevant.)

Set $z_1=z_0/\delta$. If $|z-z_1|<2$, we have the expansion
\begin{equation*}
 |z|^{n-\alpha}=\delta^{-n+\alpha}|\delta z|^{n-\alpha}
 =\delta^{-n+\alpha}\sum a_je^{iv_j\cdot\delta z}.
\end{equation*}
Choose now any ball $B=B(x_0,r)$.
Set $y_0=x_0-rz_1$ and $B'=B(y_0,r)$.
Then, if $x\in B$ and $y\in B'$,
\begin{equation*}
 \left|\frac{x-y}r-z_1\right|\le\left|\frac{x-x_0}r\right|+\left|\frac{y-y_0}r\right|<2.
\end{equation*}
Denote $\sgn(f(x)-f_{B'})$ by $s(x)$. 
Then
\begin{align*}
 &\int_B|b(x)-b_{B'}|\,dx
 =
 \int_B(b(x)-b_{B'})s(x)\,dx
 =
 \frac1{|B'|}\int_B\int_{B'}(b(x)-b(y))s(x)\,dy\,dx \\
 &=
 \frac1{|B'|}\int_{\R^n}\int_{\R^n}
  (b(x)-b(y))
  \frac{r^{n-\alpha}\left|\frac{x-y}{r}\right|^{n-\alpha}}{|x-y|^{n-\alpha}}
  s(x)\chi_{B}(x)\chi_{B'}(y)\,dy\,dx \\
 &=
 \frac{r^{n-\alpha}\delta^{-n+\alpha}}{|B'|}\int_{\R^n}\int_{\R^n}
  \frac{b(x)-b(y)}{|x-y|^{n-\alpha}} \sum a_je^{iv_j\cdot\delta\frac{x-y}r}
  s(x)\chi_{B}(x)\chi_{B'}(y)\,dy\,dx.
\end{align*}
Here, we set $C=\delta^{-n+\alpha}|B(0,1)|^{-1}$ and
\begin{equation*}
 g_j(y)=e^{-iv_j\cdot\delta\frac{y}r}\chi_{B'}(y),
 \quad
 h_j(x)=e^{iv_j\cdot\delta\frac{x}r}s(x)\chi_{B}(x).
\end{equation*}
Then
\begin{align*}
 \int_B|b(x)-b_{B'}|\,dx 
 &=
 Cr^{-\alpha}\sum a_j 
 \int_{\R^n}\int_{\R^n}
  \frac{b(x)-b(y)}{|x-y|^{n-\alpha}} g_j(y)h_j(x) \,dy\,dx \\
 &=
 Cr^{-\alpha}\sum a_j \int_{\R^n} ([b,\Ia]g_j)(x)h_j(x)\,dx \\
 &\le
 Cr^{-\alpha}\sum |a_j| \int_{\R^n} |([b,\Ia]g_j)(x)||h_j(x)|\,dx \\
 &=
 Cr^{-\alpha}\sum |a_j| \int_{B} |([b,\Ia]g_j)(x)|\,dx \\
 &\le
 2Cr^{-\alpha}\sum |a_j| \|\chi_B\|_{\LcPs} \|[b,\Ia]g_j\|_{\LPs} \\
 &\le
 2Cr^{-\alpha}\|[b,\Ia]\|_{\LP\to\LPs} |B|\Psi^{-1}(|B|^{-1})
 \sum |a_j| \|g_j\|_{\LP}.
\end{align*}
Since $\|g_j\|_{\LP}=\|\chi_{B'}\|_{\LP}=1/\Phi^{-1}(|B'|^{-1})\sim1/\Phi^{-1}(r^{-n})$,
we have
\begin{equation*}
 \frac1{\psi(B)}
 \mint_B|b(x)-b_{B'}|\,dx 
 \ls
 \|[b,\Ia]\|_{\LP\to\LPs}
 \frac{\Psi^{-1}(r^{-n})}{r^{\alpha}\psi(B)\Phi^{-1}(r^{-n})}
 \ls
 \|[b,\Ia]\|_{\LP\to\LPs}.
\end{equation*}
That is, $\|b\|_{\cL^{(1,\psi)}}\ls\|[b,\Ia]\|_{\LP\to\LPs}$
and we have the conclusion.
\end{proof}

\section*{Acknowledgement}
The authors would like to thank the referee for her/his careful reading 
and useful comments.
This research was supported by Grant-in-Aid for Scientific Research (B), 
No.~15H03621, Japan Society for the Promotion of Science.



\begin{thebibliography}{99}



\bibitem{Arai-Nakai2017REMC}
R.~Arai and E.~Nakai,
Commutators of Calder\'on-Zygmund and generalized fractional integral operators on generalized Morrey spaces,
Rev. Mat. Complut. 31 (2018), no.~2, 287--331.
https://doi.org/10.1007/s13163-017-0251-4



\bibitem{Chanillo1982}
S.~Chanillo,
A note on commutators,
Indiana Univ. Math. J. 31 (1982), no.~1, 7--16.


\bibitem{Cianchi1999}
A. Cianchi,  
{Strong and weak type inequalities for some classical operators 
in Orlicz spaces}, 
J. London Math. Soc. (2) 60 (1999), no.~1, 187--202.


\bibitem{Deringoz-Guliyev-Nakai-Sawano-Shi-preprint} 
F.~Deringoz, V.S.~Guliyev, E.~Nakai, Y.~Sawano and M.~Shi,
Generalized fractional maximal and integral operators on Orlicz
and generalized Orlicz--Morrey spaces of the third kind,
Positivity, Online First. \\
http://link.springer.com/article/10.1007/s11117-018-0635-9 \\
https://arxiv.org/abs/1812.03649

\bibitem{Edmunds-Gurka-Opic1995}
D. E. Edmunds, P. Gurka and  B. Opic,
{Double exponential integrability of convolution operators in generalized Lorentz-Zygmund spaces}, 
Indiana Univ. Math. J. 44 (1995), no.~1, 19--43.






\bibitem{Fu-Yang-Yuan2014} 
X.~Fu, D.~Yang and W.~Yuan, 
Generalized fractional integrals and their commutators over non-homogeneous metric measure spaces, 
Taiwanese J. Math. 18 (2014), no.~2, 509--557. 




\bibitem{Grafakos2014GTM250} 
L.~Grafakos,
Modern Fourier analysis, Third edition, 
Graduate Texts in Mathematics, 250. Springer, New York, 2014. 
xvi+624 pp. 

\bibitem{Guliyev-Deringoz-Hasanov2017}
V.~S.~Guliyev, F.~Deringoz and S.~G.~Hasanov, 
Riesz potential and its commutators on Orlicz spaces, 
J. Inequal. Appl. 2017, Paper no.~75, 18 pp. 



\bibitem{Hedberg1972}
L.~I.~Hedberg,
{On certain convolution inequalities}, 
Proc. Amer. Math. Soc. 36 (1972), 505--510. 

\bibitem{Janson1978}
S.~Janson, 
Mean oscillation and commutators of singular integral operators,
Ark. Mat.  16  (1978), no. 2, 263--270. 

\bibitem{Kawasumi-Nakai-preprint}
R.~Kawasumi and E.~Nakai,
Pointwise multipliers on weak Orlicz spaces,
preprint.
https://arxiv.org/abs/1811.02858

\bibitem{Kita1996PAMS}
H.~Kita,
{On maximal functions in Orlicz spaces},
Proc. Amer. Math. Soc. 124 (1996), 3019--3025.

\bibitem{Kita1997MathNachr}
H.~Kita,
{On Hardy-Littlewood maximal functions in Orlicz spaces},
Math. Nachr. 183 (1997), 135--155.

\bibitem{Kita2009}
H.~Kita,
Orlicz spaces and their applications (Japanese),
Iwanami Shoten, Publishers. Tokyo, 2009.


\bibitem{Kokilashvili-Krbec1991}
V.~Kokilashvili and M.~Krbec,
{Weighted inequalities in Lorentz and Orlicz spaces},
World Scientific Publishing Co., Inc., River Edge, NJ, 1991. 

\bibitem{Krasnoselsky-Rutitsky1961}
M,~A.~Krasnoselsky and Y.~B.~Rutitsky, 
Convex functions and Orlicz spaces. 
Translated from the first Russian edition by Leo F. Boron. 
P. Noordhoff Ltd., Groningen 1961 .


\bibitem{Maligranda1989}
L. Maligranda,
Orlicz spaces and interpolation,
Seminars in mathematics 5,
Departamento de Matem\'atica, Universidade Estadual de Campinas, Brasil, 1989.


\bibitem{Mizuta-Nakai-Ohno-Shimomura2010JMSJ}
Y.~Mizuta, E.~Nakai, T.~Ohno and T.~Shimomura, 
Boundedness of fractional integral operators on Morrey spaces and Sobolev embeddings 
for generalized Riesz potentials. 
J. Math. Soc. Japan 62 (2010), no.~3, 707--744. 






\bibitem{Nakai2000ISAAC}
E.~Nakai, 
On generalized fractional integrals in the Orlicz spaces. 
Proceedings of the Second ISAAC Congress, Vol. 1 (Fukuoka, 1999), 75--81, 
Int. Soc. Anal. Appl. Comput., 7, Kluwer Acad. Publ., Dordrecht, 2000. 

\bibitem{Nakai2001Taiwan}
E. Nakai, 
{On generalized fractional integrals}, 
Taiwanese J. Math. 5 (2001), 587--602.

\bibitem{Nakai2001SCMJ}
E. Nakai, 
{On generalized fractional integrals in the Orlicz spaces 
on spaces of homogeneous type}, 
Sci. Math. Jpn. 54 (2001), 473--487.

\bibitem{Nakai2002Lund}
E. Nakai,
{On generalized fractional integrals on the weak Orlicz spaces, 
$\BMO_{\phi}$, the Morrey spaces and the Campanato spaces}, 
Function spaces, interpolation theory and related topics (Lund, 2000), 
de Gruyter, Berlin, 2002, 389--401.

\bibitem{Nakai2004KIT}
E. Nakai,
{Generalized fractional integrals on Orlicz-Morrey spaces}, 
Banach and Function Spaces
(Kitakyushu, 2003), Yokohama Publishers, Yokohama, 2004, 323--333.


\bibitem{Nakai2008Studia}
E.~Nakai,
{Orlicz-Morrey spaces and the Hardy-Littlewood maximal function},
Studia Math. 188 (2008), No~3, 193--221.

\bibitem{Nakai2008AMS}
E.~Nakai,
A generalization of Hardy spaces $H^p$ by using atoms, 
Acta Math. Sin. (Engl. Ser.) 24 (2008), no.~8, 1243--1268.


\bibitem{Nakai-Sumitomo2001SCMJ}
E.~Nakai and H.~Sumitomo,
On generalized Riesz potentials and spaces of some smooth functions,
Sci. Math. Jpn. 54 (2001), no.~3, 463--472.



\bibitem{ONeil1965}
R.~O'Neil,
{Fractional integration in Orlicz spaces. I.}, 
Trans. Amer. Math. Soc. 115 (1965), 300--328. 

\bibitem{Orlicz1932}
W.~Orlicz,
{\"Uber eine gewisse Klasse von R\"aumen vom Typus B},
Bull. Acad. Polonaise A (1932), 207--220; 
reprinted in his Collected Papers, PWN, Warszawa 1988, 217--230.

\bibitem{Orlicz1936}
W.~Orlicz,
{\"Uber R\"aume $(L^M)$},
Bull. Acad. Polonaise A (1936), 93--107; 
reprinted in his Collected Papers, PWN, Warszawa 1988, 345--359.




\bibitem{Rao-Ren1991}
M.~M.~Rao and Z.~D.~Ren,
{Theory of Orlicz Spaces},
Marcel Dekker, Inc., New York, Basel and Hong Kong, 1991.


\bibitem{Sawano-Sugano-Tanaka2011} 
Y.~Sawano, S.~Sugano and H.~Tanaka, 
Generalized fractional integral operators and fractional maximal operators 
in the framework of Morrey spaces, 
Trans. Amer. Math. Soc. 363 (2011), no.~12, 6481--6503.





\bibitem{Strichartz1972}
R. S. Strichartz,
{A note on Trudinger's extension of Sobolev's inequalities}, 
Indiana Univ. Math. J. 21 (1972), 841--842.

\bibitem{Torchinsky1976}
A. Torchinsky, 
{Interpolation of operations and Orlicz classes}, 
Studia Math. 59 (1976), no.~2, 177--207. 


\bibitem{Trudinger1967}
N. S. Trudinger, 
{On imbeddings into Orlicz spaces and some applications}, 
J. Math. Mech. 17 (1967), 473--483.

\bibitem{Weiss1956}
G.~Weiss, 
A note on Orlicz spaces,
Portugal. Math. 15 (1956), 35--47.


\end{thebibliography}
\end{document}